\newtheorem{prop}{Proposition}[section]
\newtheorem{thm}[prop]{Theorem}
\newtheorem{cor}[prop]{Corollary}
\newtheorem{lem}[prop]{Lemma}
\newtheorem{adden}[prop]{Addendum}
\theoremstyle{definition}
\newtheorem{de}[prop]{Definition}
\theoremstyle{remark}
\newtheorem{Remark}[prop]{Remark}             %% standard numbered style
\newtheorem{Remarks}[prop]{Remarks}             %% standard numbered style
\def\complex{{\mathbb C}}
\def\C{{\mathbb C}}
\def\CP{{\mathbb C \mathbb P}}
\def\zed{{\mathbb Z}}
\def\Z{{\mathbb Z}}
\def\real{{\mathbb R}}
\def\R{{\mathbb R}}
\def\inter{\mathop{\rm int}\nolimits}
\def\cl{\mathop{\rm cl}\nolimits}
\def\dim{\mathop{\rm dim}\nolimits}
\def\rel{\mathop{\rm rel}\nolimits}
\def\max{\mathop{\rm max}\nolimits}
\def\id{\mathop{\rm id}\nolimits}
\def\SO{\mathop{\rm SO}\nolimits}
\def\G{\mathop{\mathcal{G}}\nolimits}
\def\r{\mathop{\mathcal{R}}\nolimits}
\def\co{\colon\thinspace}
\begin{document}
\title{Creating Stein surfaces by topological isotopy}
\author{Robert E. Gompf}
\thanks{Partially supported by NSF grants DMS-0102922 and DMS-0603958.}
\address{%Department of Mathematics, 
The University of Texas at Austin%, 1 University Station C1200, Austin, TX 78712-0257
}
\email{gompf@math.utexas.edu}
%\date{November 20, 2020}
\begin{abstract}
We combine Freedman's topology with Eliashberg's holomorphic theory to construct Stein neighborhood systems in complex surfaces, and use these to study various notions of convexity and concavity. Every tame, topologically embedded 2-complex $K$ in a complex surface, after $C^0$-small topological ambient isotopy, is the intersection of an uncountable nested family of Stein regular neighborhoods that are all topologically ambiently isotopic rel $K$, but frequently realize uncountably many diffeomorphism types. These arise from the Cantor set levels of a topological mapping cylinder. The boundaries of the neighborhoods are 3-manifolds that are only topologically embedded, but still satisfy a notion of pseudoconvexity. Such 3-manifolds share some basic properties of hypersurfaces that are strictly pseudoconvex in the usual smooth sense, but they are far more common. The complementary notion of topological pseudoconcavity is realized by uncountably many diffeomorphism types homeomorphic to $\R^4$.
\end{abstract}
\maketitle
%\baselineskip=18pt         %% double-spaced draft mode

%%%%%%%%%%%%%%%%%%%111111111111111111111%%%%%%%%%%

\section{Introduction}

Since its inception, 4-manifold theory has been repeatedly transformed by breakthroughs from disparate areas of mathematics. Freedman's original breakthrough \cite{F} (see also \cite{FQ}) around 1981 used delicate infinite surface constructions and difficult point-set topology to elucidate the topological category, classifying simply connected, closed, topological 4-manifolds up to homeomorphism. Contemporaneously, Donaldson \cite{D} pioneered the use of gauge theory to understand smooth 4-manifolds, showing that they were very different from their topological counterparts. The interplay between these theories showed, for example, that smoothable 4-manifolds typically admit infinitely many nondiffeomorphic smooth structures, even when higher dimensional smoothing theory predicts uniqueness. For example, $\R^4$ admits uncountably many exotic smoothings, while $\R^n$ is uniquely smoothable for $n\ne 4$. A third development was Eliashberg's 1990 paper \cite{E} (see also \cite{CE}) on Stein manifolds, that is, complex $n$-manifolds admitting proper holomorphic embeddings in $\C^N$. Eliashberg showed, surprisingly, that whether a smooth manifold admits a Stein structure is entirely determined by basic differential topology. While the characterization is particularly subtle in real dimension 4 (Stein {\em surfaces}), its interplay with gauge-theoretic results has had striking applications both in complex analysis and smooth 4-manifold topology. What is perhaps more startling is that Eliashberg's work can be played off against Freedman's topological theory to obtain results spanning the topological, smooth and complex analytic categories. For example, every Stein open subset of $\C^2$ ({\em domain of holomorphy}) contains an uncountable family of other such  domains of holomorphy, all of which are homeomorphic to the original but pairwise nondiffeomorphic \cite{MinGen} (also Corollary~\ref{C2} and subsequent text). In particular, there are uncountably many diffeomorphism types of domains of holomorphy in $\C^2$ homeomorphic to $\R^4$. The present paper uses the full depth of Freedman's construction to further explore the interplay of these three categories, for example, constructing uncountable neighborhood systems of homeomorphic but nondiffeomorphic Stein surfaces.

The main results of this paper were announced in the 2005 article \cite{JSG}, and then again in the expository paper \cite{yfest}. The first of these papers showed that in a complex surface $X$, every tame, topologically embedded CW 2-complex can be perturbed to have a topological regular neighborhood that is Stein in the complex structure inherited as an open subset of $X$. (We augment this to a Stein neighborhood system in Corollary~\ref{2cplx} below.) Its antecedent \cite{Ann} used Eliashberg's work to construct Stein surfaces in the abstract (rather than as subsets of complex surfaces), and mainly related to the smooth category. However, it also introduced Freedman theory to show that the interior of every oriented 2-handlebody is {\em homeomorphic} (but frequently not diffeomorphic) to a Stein surface, and to construct uncountably many Stein diffeomorphism types homeomorphic to $\R^4$. Subsequently, two other papers spun off of this work. A necessary and sufficient condition was given in \cite{steindiff} for an open subset or embedded 2-handlebody in a complex surface to be smoothly isotopic to a Stein open subset or closed domain, with various applications connecting the smooth and analytic categories. (A related theorem is presented from a different viewpoint in \cite{CE}.) In \cite{MinGen}, smoothing theory for open 4-manifolds was explored, using Freedman's technology to construct smooth structures and distinguishing them by the genus inequality for Stein surfaces (ultimately from gauge theory). The present paper dives deeper into Freedman's work, but has been much simplified by the intervening publications.

According to Eliashberg, an almost-complex structure on a smooth $2n$-manifold $X$ is homotopic to a Stein structure if and only if $X$ is diffeomorphic to the interior of an $n$-handlebody (i.e., a smooth handlebody built with handles of index at most $n$, possibly infinitely many; see \cite{yfest} in the infinite case), subject to an additional constraint on 2-handle framings when $n=2$. In the latter case, the constraint vanishes if we work up to homeomorphism \cite{Ann}, but is necessary in the smooth setting: $S^2\times\R^2$ has no Stein structure. Note that almost-complex structures on oriented 2-handlebodies always exist since $\SO(2n)/{\rm U}(n)$ is 1-connected. Eliashberg's methods also apply up to isotopy for open subsets of complex manifolds. (See \cite{yfest} for a survey of both the abstract and embedded cases.) Recall that a {\em smooth\/} (resp.\ {\em topological\/}) {\em isotopy\/} is a $C^\infty$ (resp.\ $C^0$) homotopy through $C^\infty$ (resp.\ $C^0$) embeddings. An {\em embedding\/} is a (not necessarily proper) map that is a diffeomorphism (resp.\ homeomorphism) onto its image. An isotopy of an inclusion map is also called an isotopy from the domain to its image under the final embedding. An {\em ambient isotopy\/} is a homotopy of the identity map of the ambient space through diffeomorphisms (resp.\ homeomorphisms). According to \cite{steindiff}, an open subset $U$ of a complex surface is smoothly isotopic to a Stein open subset if and only if the inherited almost-complex structure on $U$ is homotopic to a Stein structure on it. (The corresponding statement is true in all dimensions by \cite[Theorem~13.8]{CE}, but \cite{steindiff} gives a stronger result for smoothly embedded 2-handlebodies up to smooth ambient isotopy; see Theorem~\ref{main} below.) The main theorem of \cite{JSG} essentially states that every topologically embedded 2-handlebody in a complex surface has interior topologically isotopic to a Stein open subset. The first main result of the present paper, Theorem~\ref{iso}, upgrades this to an ambient isotopy. This improvement lays the foundation for our subsequent results by creating a Stein neighborhood whose boundary is a topological 3-manifold. The following is a simplified version of Theorem~\ref{iso}:

\begin{thm}\label{iso1}
Let $X$ be a complex surface, and let $\mathcal{H}\subset X$ be a 4-dimensional, topologically embedded, collared 2-handlebody with $\cl\mathcal{H}-\mathcal{H}$ totally disconnected. Then there is a topological ambient isotopy sending the interior of $\mathcal{H}$ to a subset that is Stein in the complex structure inherited from $X$.
\end{thm}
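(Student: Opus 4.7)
The plan is to start from the isotopy result of \cite{JSG} and promote it to an ambient isotopy, using the bicollar provided by the collared hypothesis together with topological isotopy extension, while exploiting the totally disconnected nature of $\cl\mathcal{H}-\mathcal{H}$ to handle the accumulation points one at a time.

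First, I would invoke the main theorem of \cite{JSG} to produce a topological isotopy $\phi_t\co \inter\mathcal{H}\hookrightarrow X$, $t\in[0,1]$, with $\phi_0$ the inclusion and $V:=\phi_1(\inter\mathcal{H})$ Stein in the inherited complex structure. Tracing through the construction---which inserts Freedman-theoretic objects such as Casson handles inside neighborhoods of the attaching regions of the 2-handles---one may arrange that $\phi_t$ is $C^0$-small, fixes the cores of all handles, and is supported in an arbitrarily thin interior collar $\partial\mathcal{H}\times[0,1)$ of $\partial\mathcal{H}$ inside $\mathcal{H}$.

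Next, combining that interior collar with the external collar furnished by the hypothesis yields a topological bicollar $N\cong \partial\mathcal{H}\times(-1,1)$ of $\partial\mathcal{H}$ in $X$ on which all motion of $\phi_t$ takes place. Because $\partial\mathcal{H}$ is locally flat in $N$ and $\phi_t$ acts by a topological isotopy of the inner half $\partial\mathcal{H}\times(-1,0]$, the topological isotopy extension theorem (Edwards--Kirby, or Quinn in dimension 4) produces an ambient isotopy of $N$ supported in a closed subbicollar and realizing $\phi_t$ there. Extending by the identity on the complement yields an ambient isotopy of $X\setminus(\cl\mathcal{H}-\mathcal{H})$ that carries $\inter\mathcal{H}$ onto $V$.

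Finally, one must continuously extend this ambient isotopy across the accumulation set by fixing each point of $\cl\mathcal{H}-\mathcal{H}$. The total disconnectedness hypothesis ensures that every point $p\in\cl\mathcal{H}-\mathcal{H}$ has arbitrarily small open neighborhoods $U\subset X$ whose topological frontiers in $X$ miss $\cl\mathcal{H}$; in particular the pieces of $N$ that accumulate at $p$ can be separated from one another inside such $U$, and a $C^0$-small isotopy of this local collection admits a natural continuous extension at $p$ by the identity. The main obstacle, and the technical core of the argument, is to control the smallness of $\phi_t$ \emph{uniformly} over $\cl\mathcal{H}-\mathcal{H}$: the Freedman structures inside the collar and a cofinal sequence of separating covers of $\cl\mathcal{H}-\mathcal{H}$ must be chosen simultaneously and compatibly, so that the assembled isotopy remains jointly continuous in $(x,t)$ at every accumulation point. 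Once this uniform control is arranged, the resulting ambient isotopy of $X$ sends $\inter\mathcal{H}$ onto the Stein open subset $V$, as required.
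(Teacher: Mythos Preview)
Your proposal has a genuine gap at the isotopy extension step. The construction in \cite{JSG} builds the Stein surface $V=\phi_1(\inter\mathcal{H})$ by replacing each 2-handle with a \emph{Casson} handle. The endpoint compactification of a Casson handle is not a topological manifold: its $\partial_+$ region is a solid torus modulo a Whitehead continuum (cf.\ the proof of Proposition~\ref{fh1}(c)). Consequently, the frontier of $V$ in $X$ is not a bicollared 3-manifold, and $\phi_t$ cannot extend continuously to $\partial\mathcal{H}$; if it did, $\phi_1(\partial\mathcal{H})$ would be a bicollared 3-manifold coinciding with that frontier. Since your isotopy extension argument needs $\phi_t$ to act on the closed inner half $\partial\mathcal{H}\times(-1,0]$ of the bicollar---in particular on $\partial\mathcal{H}$ itself---it cannot proceed. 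This is exactly the drawback of Casson handles flagged in the introduction.

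The paper's route is not to promote the \cite{JSG} isotopy but to redo the construction with objects whose closures are collared 2-handles. These are the Freedman handles of Definition~\ref{fh}, and the new technical content is twofold: Theorem~\ref{embFH} produces a convergent Freedman handle with collared closure inside any closed 2-handle homeomorph (using Quinn's handle straightening and Freedman's disk embedding theorem), and Theorem~\ref{AnnGCH} shows Stein structures extend over Freedman handles just as they do over Casson handles. With these in hand, the ambient isotopy is built directly inside a slight enlargement $\mathcal{H}'\supset\mathcal{H}$, fixing $\partial\mathcal{H}'$ pointwise, so no isotopy extension theorem is invoked; your treatment of the totally disconnected set $\cl\mathcal{H}-\mathcal{H}$ via extension by the identity is then essentially what the paper does in the first paragraph of the proof of Theorem~\ref{iso}.
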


\noindent The condition on the closure $\cl\mathcal{H}$ controls clustering of the handles of $\mathcal{H}$ inside $X$. In particular, it is vacuously true if $\mathcal{H}$ is a finite handlebody, or more generally if the embedding is proper. A (smooth or topological) {\em collar} on a codimension-1 submanifold $M$ is an extension of the inclusion to an embedding of $[0,1]\times M$ (where we identify $\{0\}\times M$ with $M$ in the obvious way). We call $M$ {\em bicollared} if it is collared on both sides. (This usually implies the topological category, since $M$ is always bicollared in the smooth, orientable setting.) Since the boundary of a topological manifold $Y$ is collared in $Y$, we call a codimension-0 submanifold {\em collared} if its boundary is bicollared. A codimension-0 submanifold becomes collared if we (nonambiently) isotope it along the boundary collar into its interior.

\begin{cor} \label{char} An open subset $U$ of a complex surface $X$ is topologically isotopic to a Stein open subset if and only if it is homeomorphic to the interior of a 2-handlebody.
\end{cor}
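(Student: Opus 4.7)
The forward direction will be immediate from Eliashberg's theorem: if $U$ is topologically isotopic to a Stein open subset $V \subset X$, then $U$ is homeomorphic to $V$, and the Stein surface $V$ is diffeomorphic to the interior of a (smooth) 2-handlebody by Eliashberg's characterization.

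For the converse, the plan is to produce a topologically embedded 2-handlebody $\mathcal{H} \subset X$ that satisfies the hypotheses of Theorem~\ref{iso1} and whose interior is topologically isotopic to $U$ within $U \subset X$, and then to concatenate that isotopy with the ambient isotopy furnished by Theorem~\ref{iso1}. Starting from a homeomorphism $f \co \inter \mathcal{H}_0 \to U$ with $\mathcal{H}_0$ an abstract (smooth) 2-handlebody, I would use a collar of $\partial \mathcal{H}_0$ in $\mathcal{H}_0$ to push $\mathcal{H}_0$ into its own interior, obtaining a closed copy $\mathcal{H}_0' \subset \inter \mathcal{H}_0$ with bicollared 3-manifold boundary. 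Setting $\mathcal{H} := f(\mathcal{H}_0')$ then yields a collared, topologically embedded 2-handlebody inside $U$ (hence inside $X$), while pushing back along this collar gives a topological isotopy of $U$ in $X$ from the inclusion to a homeomorphism onto $\inter \mathcal{H}$.

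The hard step will be guaranteeing the closure condition, that $\cl_X \mathcal{H} - \mathcal{H}$ is totally disconnected. When $\mathcal{H}_0$ is compact this is automatic, since $\mathcal{H}$ is then itself compact and closed in $X$. In the general case, I would fix a handle structure on $\mathcal{H}_0$ so that each compact subset of $\inter \mathcal{H}_0$ meets only finitely many handles, and refine the push-in by shrinking the $i$-th handle enough that the $X$-diameter of its image under $f$ tends to zero. Any cluster point of $\mathcal{H}$ in $X$ then lies in $\partial_X U$ and arises as a limit of points from the images of successive handles; by iterating the shrinking along sequences of handles escaping to infinity in $U$, and by precomposing $f$ with a suitable ambient self-isotopy of $\inter \mathcal{H}_0$ to control where these escaping handles go, the cluster set can be arranged to be totally disconnected. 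This is the main obstacle, and I expect it to require care analogous to the mapping-cylinder constructions that underlie Theorem~\ref{iso1} itself.

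With $\mathcal{H}$ in hand, Theorem~\ref{iso1} supplies an ambient isotopy $\phi_t$ of $X$ with $\phi_1(\inter \mathcal{H})$ Stein. Concatenating the isotopy of $U$ onto $\inter \mathcal{H}$ constructed above with the restriction of $\phi_t$ to $\inter \mathcal{H}$, identified with $U$ via the first isotopy, will then yield a topological isotopy of $U$ in $X$ ending at the Stein open subset $\phi_1(\inter \mathcal{H})$.
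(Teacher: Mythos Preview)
Your forward direction matches the paper's. For the converse, your overall plan---push a handlebody into its interior, then invoke Theorem~\ref{iso1}---is also the paper's, but you have introduced an unnecessary obstacle by applying Theorem~\ref{iso1} in $X$ rather than in $U$. You correctly identify the closure condition $\cl_X\mathcal{H}-\mathcal{H}$ as the difficulty, and then leave it unresolved: the sketch about shrinking handle diameters and controlling escaping sequences is not an argument, and you say as much. So as written, the converse has a genuine gap.

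The paper sidesteps this entirely with a one-line observation: $U$ is itself a complex surface (as an open subset of $X$), so apply Theorem~\ref{iso1} with $U$ in the role of the ambient complex surface. The pushed-in copy $\mathcal{H}\subset U$ is \emph{closed} in $U$, so $\cl_U\mathcal{H}-\mathcal{H}=\emptyset$, which is vacuously totally disconnected. The resulting ambient isotopy of $U$ carries $\inter\mathcal{H}$ to a Stein open subset of $U$, hence of $X$, and concatenation with the initial push-in isotopy finishes the proof. No control over $\cl_X\mathcal{H}$ is ever needed.
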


\noindent There may not be any ambient (or smooth) isotopy, as we see from the example $U=X= \CP ^1 \times \complex$.

\begin{proof}
If $U$ is topologically isotopic to a Stein surface $V$, then $U$ is obviously homeomorphic to $V$, which is diffeomorphic to a 2-handlebody interior (using \cite[Proposition~A.1]{yfest} if there are infinitely many handles). Conversely, the given handlebody $\mathcal{H}$ can be isotoped inside itself onto a closed subset of its interior. Restricting this isotopy to $\inter \mathcal{H}$ allows us to topologically isotope $U$ inside itself onto the interior of a closed subset of $U$ homeomorphic to $\mathcal{H}$. Apply Theorem~\ref{iso1} to $\mathcal{H}$ inside the complex surface $U$.
\end{proof}

The Stein surfaces arising from Theorem~\ref{iso1} are constructed by Freedman theory, so they typically have infinite smooth topology, even when the initial handlebodies are finite and smooth. Thus, they usually are not diffeomorphic to the interiors of compact 4-manifolds. Nevertheless, we have considerable control over their diffeomorphism types. Each is the interior of a smoothly embedded infinite 2-handlebody. We can control the combinatorics well enough to arrange the diffeomorphism types to be universal in the following sense: For all embeddings $\mathcal{H}\subset Y$ of a fixed $\mathcal{H}$ as in Theorem~\ref{iso1}, diffeomorphic in a neighborhood of $\cl\mathcal{H}$ to the given one into $X$ and with a suitable bound on the Chern classes $c_1(TY|\mathcal{H})$, the resulting Stein surfaces generated by the theorem are all diffeomorphic. (Corollary~\ref{diff} states this in a stronger form.) On the other hand, these universal types can be chosen flexibly: They come in infinitely many diffeomorphism types if ${\rm H}_2(\mathcal{H})\ne0$, and uncountably many under various hypotheses, for example if $X$ is $\C^2$ or a blowup of it. See Theorem~\ref{infDiff} and subsequent text.

The extra control due to the ambient nature of the isotopy in Theorem~\ref{iso1} allows us to find deeper structure in our Stein surfaces, spanning the topological and analytic categories. To understand this structure, recall that a smooth handlebody $\mathcal{H}$ whose handles all have index less than $\dim\mathcal{H}$ inherits a smooth mapping cylinder structure. That is, there is a smooth surjection $\psi\co[0,1] \times \partial \mathcal{H} \to \mathcal{H}$ sending $\{ 0 \} \times \partial \mathcal{H}$ onto the core $K$ of $\mathcal{H}$, a CW-complex with smooth cells, and restricting to the identity $\{ 1 \} \times \partial\mathcal{H}\to\partial\mathcal{H}$ and to a diffeomorphism $ (0,1] \times \partial \mathcal{H} \to \mathcal{H} - K$. (See the beginning of Section~\ref{Onionproof}.) The closed subsets $\mathcal{H}_\sigma = \psi([0,\sigma] \times \partial \mathcal{H}),\ 0<\sigma \le 1$, are all canonically diffeomorphic rel $K$, and their interiors form a neighborhood system of $K$ when $\mathcal{H}$ is a finite handlebody. (That is, when $\partial\mathcal{H}$ is compact, every neighborhood of $K$ contains some $\inter\mathcal{H}_\sigma$). For $\mathcal{H}$ topologically embedded in $X$ as in Theorem~\ref{iso1}, we can simultaneously control uncountably many of these interiors, namely those indexed by the standard Cantor set $\Sigma\subset[0,1]$.

\begin{de}\label{oniondef}
A topological embedding $g\co\mathcal{H}\hookrightarrow X$ will be called a {\em Stein onion on $\psi$ with core $g(K)$} if for each $\sigma\in\Sigma-\{0\}$,  the open set $g(\inter\mathcal{H}_\sigma)$ is Stein, $g|K$ is smooth except on one point of each open 2-cell, and the 2-cells are totally real (i.e.\ not tangent to any complex line) where they are smooth.
\end{de}

\begin{thm} \label{onion1}
Suppose a 2-handlebody $\mathcal{H}$ is given a mapping cylinder structure $\psi$ as above and a topological embedding as in Theorem~\ref{iso1} (i.e., it is collared with $\cl\mathcal{H}-\mathcal{H}$ totally disconnected).  Then the final embedding $g$ of the isotopy of Theorem~\ref{iso1} can be chosen to be a Stein onion on $\psi$.
\end{thm}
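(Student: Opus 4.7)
The plan is to build the Stein onion inductively along an increasing family of Cantor approximations, using Theorem~\ref{iso1} at each stage with carefully controlled supports, and invoking the Behnke--Stein theorem (increasing unions of Stein open sets are Stein) to handle the limit points. Let $\Sigma_n\subset\Sigma$ denote the $2^{n+1}$ endpoints of the closed intervals surviving after $n$ rounds of middle-thirds removal, so $\Sigma_0=\{0,1\}$ and $\Sigma=(\bigcup_n\Sigma_n)\cup\Sigma^{\rm lim}$. Every $\sigma\in\Sigma^{\rm lim}$ has a non-eventually-constant base-$3$ expansion in $\{0,2\}$, so its lower truncations $\sigma^{(n)}=\sum_{i\le n}a_i 3^{-i}\in\bigcup_n\Sigma_n$ satisfy $\sigma^{(n)}\le\sigma$ and increase to $\sigma$ strictly along the subsequence where $a_{n+1}=2$. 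The mapping cylinder identity $\inter\mathcal{H}_\sigma=\psi([0,\sigma)\times\partial\mathcal{H})=\bigcup_n\inter\mathcal{H}_{\sigma^{(n)}}$ then lets Behnke--Stein upgrade Stein-ness at the $\sigma^{(n)}$ to Stein-ness at $\sigma$. Hence it suffices to produce a single topological ambient isotopy, ending at some $g$, so that $g(\inter\mathcal{H}_\sigma)$ is Stein for every $\sigma\in\bigcup_n\Sigma_n$ and so that $g$ also satisfies the core conditions of Definition~\ref{oniondef}.

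Build embeddings $g^{(0)},g^{(1)},\dots$ so that $g^{(n)}(\inter\mathcal{H}_\sigma)$ is Stein for each $\sigma\in\Sigma_n-\{0\}$. Take $g^{(0)}$ to be the endpoint of the isotopy of Theorem~\ref{iso1} applied to the given embedding, chosen to additionally arrange $g^{(0)}|K$ smooth except at one point per open 2-cell and totally real elsewhere on the 2-cells (this is the natural core-local data that Eliashberg's construction produces from Legendrianization of attaching circles, and it handles $\Sigma_0-\{0\}=\{1\}$). For $n\ge1$, each new point $\sigma^*\in\Sigma_n-\Sigma_{n-1}$ lies between consecutive points $\sigma_-<\sigma_+$ of $\Sigma_{n-1}\cup\{0\}$, and the hypersurface $g^{(n-1)}(\psi(\{\sigma^*\}\times\partial\mathcal{H}))$ is bicollared inside the open product shell $g^{(n-1)}(\psi((\sigma_-,\sigma_+)\times\partial\mathcal{H}))$. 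Apply Theorem~\ref{iso1}, in the relative form that fixes the already-Stein inner region, to the subhandlebody $g^{(n-1)}(\mathcal{H}_{\sigma^*})$ inside the ambient Stein open surface $g^{(n-1)}(\inter\mathcal{H}_{\sigma_+})\subset X$; the resulting ambient isotopy is supported inside the shell and disjoint from $g^{(n-1)}(\partial\mathcal{H}_{\sigma_\pm})$. Process the two new points per shell in nested order (outer first, then inner inside the just-built outer Stein region) and across distinct shells in parallel (disjoint supports), yielding a single ambient isotopy $H^{(n)}$ of $X$; set $g^{(n)}=H^{(n)}_1\circ g^{(n-1)}$. Because $H^{(n)}$ is the identity on $g^{(n-1)}(\partial\mathcal{H}_\sigma)$ for each $\sigma\in\Sigma_{n-1}$, it preserves each previously-built Stein set $g^{(n-1)}(\inter\mathcal{H}_\sigma)$ setwise, so the earlier Stein structures persist while the new levels at $\sigma^*$ are Stein by construction.

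Require that each $H^{(n)}$ and its inverse have $C^0$-size less than $2^{-n}$, and that its support be disjoint from a fixed neighborhood of $g^{(0)}(K)$. Concatenating the $H^{(n)}$ on shrinking subintervals of $[0,1]$ then yields a single topological ambient isotopy of $X$ whose endpoint $g=\lim g^{(n)}$ is a homeomorphism onto its image, equal to $g^{(0)}$ near the core, so the smoothness and totally-real conditions on $g(K)$ descend from stage $0$. For $\sigma\in\bigcup_n\Sigma_n$ we have $g(\inter\mathcal{H}_\sigma)=g^{(n_\sigma)}(\inter\mathcal{H}_\sigma)$, Stein by construction; the remaining Cantor points are handled by Behnke--Stein as in the first paragraph. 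The chief technical obstacle is the relative strengthening of Theorem~\ref{iso1} used at each inductive step: one must confine its ambient isotopy to a prescribed open subset of $X$ and hold it identity on a preassigned closed subhandlebody whose interior is already Stein. This relative property is natural from the handle-by-handle nature of the Freedman--Eliashberg construction that underlies Theorem~\ref{iso1}, and either is available directly from the full form of Theorem~\ref{iso} or follows by a modest adaptation of its proof; verifying it explicitly is the main work required beyond what is already done for the single-level Theorem~\ref{iso1}.
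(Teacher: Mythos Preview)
Your approach is fundamentally different from the paper's and has a genuine gap. The paper does not iterate Theorem~\ref{iso1} and pass to a limit. It applies Theorem~\ref{iso} once to obtain a single Stein Freedman handlebody $\bar\G$, then constructs the nested family $\{\G_\sigma\}_{\sigma\in\Sigma}$ inside $\G$ by the ``basic'' and ``thin'' procedures of Theorem~\ref{onion}, and finally builds the parametrizing homeomorphism $g\co\mathcal{H}\to\bar\G$ in one piece via Freedman's partial parametrization, filling the remaining ``holes'' with the topological $s$-Cobordism Theorem with $\pi_1\cong\Z$. No convergence argument or Behnke--Stein step appears; each $\G_\sigma$ is Stein because it is directly exhibited as a Stein Freedman handlebody.

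The gap in your argument is the unjustified requirement that each $H^{(n)}$ can be taken $C^0$-small of size below $2^{-n}$. The isotopy of Theorem~\ref{iso} replaces each 2-handle core by the first-stage core of a Freedman handle, which requires motion on the scale of the core disk---a fixed diameter, independent of $n$. Confining support to a shell that is thin only in the radial $\sigma$-direction does not bound motion along $\partial\mathcal{H}$, so the limit $g=\lim g^{(n)}$ need not exist. The relative form you invoke is also not a modest adaptation: after one pass, the inner region $g^{(n-1)}(\mathcal{H}_{\sigma_-})$ is a handle-compactified Freedman handlebody with wildly embedded (not smooth pseudoconvex) boundary, so it cannot serve as $W$ in Theorem~\ref{iso}. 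For the innermost shell ($\sigma_-=0$) there is no inner Stein region at all, and your requirement that the support avoid a fixed neighborhood of $K$ directly conflicts with making $\inter\mathcal{H}_{1/3^n}$ Stein for large $n$, since those handlebodies shrink onto $K$. The paper's machinery---passing to finite smooth subtowers with genuinely pseudoconvex boundary, locating topological 2-handles via Lemma~\ref{2h}, and the thin procedure that builds the core 2-cell as an increasing union of totally real annuli---is designed precisely to overcome these obstacles and is substantially deeper than a controlled iteration of the single-level theorem.
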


\noindent We will derive this from the stronger Theorem~\ref{onion} (\ref{onion2} if  $\mathcal{H}$ is infinite). When $\mathcal{H}$ is finite, the resulting 2-complex $g(K)$ is a {\em Stein compact}, i.e., a compact subset with a Stein neighborhood system. In fact, it inherits an uncountable system of nested Stein neighborhoods $g(\inter\mathcal{H}_\sigma)$ that are all topologically ambiently isotopic rel $g(K)$ (cf.\ Corollary~\ref{2cplx}). In general, the nonsmooth point on each 2-cell $D$ of $g(K)$ is very complicated. While such a point is undistinguished in the topological category, a typical $D$ has the property that every smooth local radial function on $X$ at the nonsmooth point must have infinitely many local minima on $D$ \cite{Prop2Knots}. By \cite[Corollary~7.11]{steindiff}, every Stein surface $V$ can be smoothly isotoped within itself to a Stein open subset that is the interior of a properly embedded 2-handlebody with strictly pseudoconvex boundary and totally real core. This allows us to directly construct a smooth open mapping cylinder structure on $V$ parametrized by $[0,\infty)$, restricting to a Stein onion on $[0,1]$. However, if the 2-complex $K\subset X$ is obstructed from being both smooth and totally real (either by unsmoothability or by an obstruction such as $c_1(X)$), then the full power of the theorem is required. In that case, the neighborhoods are unlikely to be diffeomorphic to $\inter\mathcal{H}$ or each other. In fact, we can guarantee (Section~\ref{Exotic}) that the Stein surfaces with $\sigma\in\Sigma-\{0\}$ realize infinitely many diffeomorphism types if ${\rm H}_2(\mathcal{H})\ne0$, and uncountably many under various hypotheses, for example, whenever $X$ is $\C^2$. However, we can still choose the resulting collection of diffeomorphism types universally (Addendum~\ref{univ}). A Stein surface $V$ obtained from a finite $\mathcal{H}$ by Theorem~\ref{iso1} will usually not have finite type, but in contrast to applying the direct smooth approach to $V$, Theorem~\ref{onion1} still gives a compact core with a Stein neighborhood system.

Theorem~\ref{onion1} allows us to more deeply explore topological notions related to pseudoconvexity in Section~\ref{Psc}. It is common to realize a Stein open subset as the interior of a compact region bounded by a smooth, strictly pseudoconvex 3-manifold. Such open subsets necessarily have finite smooth topology, unlike the examples produced by Freedman theory. However, our examples are bounded by topologically embedded 3-manifolds, which we call topologically pseudoconvex. More generally, we call a bicollared, topologically embedded 3-manifold $M$ {\em topologically pseudoconvex} (Definition~\ref{deftoppsc}) if it has a homeomorphism $h$ to the boundary of some Stein compact, with $h$ extending biholomorphically to a neighborhood of $M$. Such 3-manifolds inherit some of the basic properties of hypersurfaces that are strictly pseudoconvex in the usual sense (e.g., Proposition~\ref{pscbasics}). However, there is much more flexibility in the topological setting. For example, for $\Sigma$ the Poincar\'e homology sphere, $\Sigma\#\bar\Sigma$ admits no tight contact structure with either orientation \cite{EtH}, and hence, no strictly pseudoconvex embedding. However, it bounds a 2-handlebody $I\times(\Sigma-\inter B^3)$ that embeds in $\C^2$, so Theorem~\ref{onion1} gives a topologically pseudoconvex embedding in $\C^2$. In fact, every closed, oriented 3-manifold $M$ has a topologically pseudoconvex immersion in $\C^2$ and embedding in any closed, simply connected complex surface with sufficiently large $b_\pm$ (Theorem~\ref{toppsc}). We explore this phenomenon in more depth in \cite{TPC}: The homotopy class of the contact plane field on a smooth, strictly pseudoconvex 3-manifold has a well-defined generalization to the topological case (as a class of almost-complex structures on $\R\times M$). In the smooth setting, such classes are highly constrained. For example, only one of infinitely many homotopy classes on $S^3$ has a smooth, strictly pseudoconvex realization. However, every class on every $M$ is realized (for example) by a topologically pseudoconvex embedding in a closed, simply connected complex surface.

Cutting topologically pseudoconvex subsets out of a compact, complex surface leaves a topologically pseudocon{\em cave} region. In the smooth setting, such a region made by cutting along a strictly pseudoconvex $S^3$ is the complement of a ball in a compact, complex surface \cite[Theorem~16.5]{CE}, so cannot be contractible. However, we show that uncountably many exotic smoothings of $\R^4$ admit topologically pseudoconcave complex structures cut out by $S^3$ (Theorem~\ref{R4}(a)). That same theorem relates to an old problem in complex analysis: It is still an open question whether $\C P^2$ can be split into two Stein open subsets along a Levi flat 3-manifold. (The analogous question in higher dimensions has a negative answer \cite{S}.) Theorem~\ref{R4}(a) shows that $\C P^2$ can at least be split into two Stein open subsets along a (thick) topologically pseudoconcave embedded $I\times S^3$, and the same holds for every simply connected, compact, complex surface. Without simple connectivity, we still obtain such a splitting along some $I\times M^3$ (Corollary~\ref{split}).

The author's previous papers relating Stein surfaces to Freedman theory used {\em Casson handles}. These were developed by Casson~\cite{C} in the 1970s in an attempt to extend the powerful tool theorems of high-dimensional topology to dimension 4. The proofs of these theorems depended on being able to find certain embedded 2-disks. In dimension 4, the corresponding disks were generically only immersed with transverse double points. Casson extended these disks to infinite towers of immersed disks, whose neighborhoods are now called Casson handles. Freedman's fundamental breakthrough began with a proof that every Casson handle is homeomorphic to an open 2-handle $D^2\times \R^2$. This allowed high-dimensional topology to work for simply connected topological 4-manifolds, resulting in their complete classification when closed. The connection to Stein theory~\cite{Ann} is that while Stein structures do not always extend over 2-handles, they can always be extended by Eliashberg's method if double points are suitably introduced. Thus, replacing 2-handles by suitable Casson handles allows Stein structures without changing the underlying homeomorphism type. The diffeomorphism type typically changes. See~\cite{JSG} for a more leisurely discussion.

Casson handles have a technical drawback: their natural compactifications are not topological manifolds. Thus, their constructed embeddings are not ambiently isotopic to interiors of closed 2-handles, as would be required for Theorem~\ref{iso1}. This drawback also seriously complicated Freedman's original paper. Freedman eventually circumvented the difficulty by adding many layers of embedded surfaces between the layers of immersed disks, obtaining embeddings whose closures were collared topological 2-handles (e.g.,~\cite{FQ}). We will call the resulting objects {\em Freedman handles}. The use of Freedman handles significantly simplified his proof, and allowed application to many nonsimply connected 4-manifolds.

The main tool of the present paper is Theorem~\ref{AnnGCH}, that Stein structures can be extended over Freedman handles. This underlies Theorem~\ref{iso1}. It also brings Freedman's original 2-handle recognition proof within range of Stein theory, the result being Theorem~\ref{onion1}. Unfortunately, Stein theory requires our Freedman handles to be slightly different from the form appearing in \cite{FQ}, so for completeness, we develop the topological theory in sufficient generality in Section~\ref{GCH}. (This issue should also be rectified in the upcoming book \cite{R}.) In fact, we use extra generality for no additional cost, showing in passing that a large class of ``generalized Casson handles" are homeomorphic to open 2-handles (Corollary~\ref{homeo}). The methods of Section~\ref{GCH} and the nonanalytic framework of Section~\ref{Onionproof} are mainly due to Freedman, and the corresponding topological results will not surprise the experts. Freedman's original proofs were quite difficult, but fortunately his main results allow us to greatly simplify some arguments in the Stein version without circularity.

To summarize, we discuss basics of Freedman handles in Section~\ref{GCH}. In Section~\ref{Iso} we introduce Stein theory, prove Theorem~\ref{iso1} and discuss universal diffeomorphism types. We construct Stein onions in Section~\ref{Onionproof}, proving Theorem~\ref{onion1}, and analyze the resulting variety of diffeomorphism types in Section~\ref{Exotic}. Finally, Section~\ref{Psc} discusses topological pseudoconvexity and concavity. We work in the smooth category except where otherwise indicated. All manifolds are assumed connected (unless otherwise specified) and oriented, and complex structures and local homeomorphisms and diffeomorphisms are assumed to respect the given orientations. We abuse terminology by taking ``pseudoconvex" to mean ``strictly pseudoconvex", and similarly for ``plurisubharmonic". (The weaker notions never arise in this paper.)

The author wishes to thank Franc Forstneri\v c and the referees for helpful comments on the first version of the paper, and Yasha Eliashberg and Mike Freedman for decades of informative conversations.

%%%%%%%2222222222222222222222222%%%%%%%%%%%%%

\section{Generalized Casson handles} \label{GCH}

Throughout the text, we will be working with various smooth 4-manifolds $H$ that, like 2-handles, are meant to be attached to the boundary of another manifold along a (compact or open) solid torus $\partial_- H\subset\partial H$. Embeddings of such manifolds in each other are assumed to be diffeomorphisms on the attaching regions and (without loss of generality) preserve the central attaching circle. Such an embedding of $H$ will be called {\em collared} if $\partial H-\partial_-H$ is bicollared. We will call $(H,\partial_-H)$ an {\em open 2-handle homeomorph} if it has a homeomorphism to $(D^2,\partial D^2)\times\inter D^2$. After isotopy, we can assume the homeomorphism is a diffeomorphism on $\partial_-H$. A {\em closed 2-handle homeomorph} will be a topological pair $(H,\partial_-H)$ homeomorphic to a 2-handle $(D^2,\partial D^2)\times D^2$, with a smoothing on the preimage $H^o$ of $D^2\times\inter D^2$ (which is then an open 2-handle homeomorph). The attaching circle of a 2-handle homeomorph inherits a canonical framing of its normal bundle. More generally, if $C$ is a circle in the boundary of a 4-manifold $X$, and the boundary map induces an isomorphism ${\rm H}_2(X,C) \cong {\rm H}_1 (C) \cong \zed$, then there is a compact, embedded surface $F$ in $X$ with boundary $C$. Choose any framing on the normal bundle of $F$ and restrict to $C$. To see that this framing $\phi$ on $C$ is independent of $F$ and its chosen framing, simply add a 2-handle to $X$ along $C$ using the framing $\phi$ to get a 4-manifold $X'$ with ${\rm H}_2(X') \cong \zed$ and vanishing intersection form. For any other choice of $F$, its union with the core of the 2-handle must also be a surface with trivial normal bundle, so $\phi$ agrees with any framing on it. These canonical framings are clearly preserved by inclusions $C \subset Y \subset X$. (Frame $C$ in both manifolds using a surface in $Y$.) In the case of 2-handle homeomorphs, the canonical framing agrees with the product framing induced by the homeomorphism to $D^2 \times \inter D^2$, because the intersection form on homology is well-defined in the topological category.

Let $F$ be a compact (connected, orientable) generically immersed surface in a 4-manifold. Throughout the text, we denote the genus of $F$ by $g=g(F)$, and similarly use $k_+$ and $k_-$ to denote the numbers of its positive and negative double points, respectively. There are standard ways of increasing these numbers. To increase $g$ by one, ambiently take the connected sum with a small torus given by the standard model in $ \real ^3 \subset \real ^4$. There is an obvious pair of embedded disks in $\real ^3$ in the model, bounded by a standard basis for ${\rm H}_1 (T^2 )$, such that attaching 2-handles with these cores to a closed tubular neighborhood of the new surface $F'$ gives back a tubular neighborhood of $F$ (up to ambient isotopy). The framings for attaching this dual pair of 2-handles are uniquely characterized as being compatible with the projection of the tubular neighborhood to $F'$. There is also a standard model for increasing $k_+$ or $k_-$ by one (e.g.~\cite{FQ}, \cite{GS}). When $F$ is embedded, a closed tubular neighborhood of the resulting surface, up to diffeomorphism, is obtained from a disk bundle on $F$ by {\em self-plumbing\/}. That is, we restrict the disk bundle to two disjointly embedded disks in $F$, to get two copies of $D^2 \times D^2$, then identify these with each other so that the fiber directions on one correspond to the base directions of the other, and smooth the corners. When $\partial F$ is empty, the initial disk bundle is not the normal disk bundle $\nu F$, but differs from it by two twists, since the homological self-intersection number is unchanged by creating the double point, and is given for a generically immersed surface by
$$
 [F] \cdot [F]= e(\nu F)+2(k_+-k_-).
$$
(Each positive double point contributes two positive intersections of $F$ with its transverse pushoff defining $[F] \cdot [F]$.) For $(F,\partial F)$ immersed in $(X,C)$ as in the previous paragraph, capping $\partial F$ with a 2-handle as before shows more generally that the normal Euler number of $F$ relative to the canonical framing $\phi$ is $-2(k_+-k_-)$. As with increasing the genus, we can attach a 2-handle in the standard model  of an added double point to get back a tubular neighborhood of $F$. The  framing for the attached 2-handle is uniquely determined by this condition \cite{C}.

To build generalized Casson handles, we first need a basic building block. Let $F$ be a compact surface whose boundary is a circle, and let $T_1$ be obtained from $F \times D^2$ by performing some self-plumbings. We call $C=\partial F \times \{ 0 \} \subset T_1$ the {\em attaching circle\/} and $\partial _- T_1 =\partial F \times D^2$ the {\em attaching region\/}, and let $\partial_+ T_1 =\partial T_1 - \inter \partial _- T_1$. We call the image of $F \times \{ 0 \}$  the {\em core\/} of $T_1$, and continue to denote it by $F$. Since the boundary map is an isomorphism ${\rm H}_2(T_1 ,C) \cong {\rm H}_1 (C)$, the attaching circle has a canonical framing, which in general differs from the product framing induced from $F \times D^2$ (by $\pm2(k_+-k_-)$ twists, depending on our choice of orientation of the attaching region). There is a canonical projection from $T_1$ to its core. A framed link $L$ in $\partial _+T_1$ will be called a {\em trivializing link\/} if its projection to $F$ is nonseparating, and attaching 2-handles to $L$ using the given framings transforms $T_1$ diffeomorphically into a standard 2-handle (necessarily preserving the canonical framing on $C$). These 2-handles will be called {\em trivializing 2-handles}. A trivializing link or sublink, and its corresponding 2-handles, will be called {\em standard} if the 2-handles are given by the above standard models.

\begin{de} \label{gct}
The above $T_1$, with a preassigned choice of trivializing link, is a {\em 1-stage generalized Casson tower\/}. For $n>1$, an {\em $n$-stage generalized Casson tower\/} $T_n$ is obtained from an $(n-1)$-stage generalized Casson tower $T_{n-1}$ by adding an {\em $n^{th}$-stage\/} consisting of 1-stage towers attached along their canonically framed attaching circles to the trivializing links for the components of the $(n-1)^{st}$ stage. We call the union of these framed links the {\em trivializing link\/} for $T_{n-1}$. The {\em attaching circle\/} $C$ and {\em attaching region\/} $\partial _- T_n $ are those of $T_1$, and $\partial_+ T_n$ is given by $\partial T_n - \inter \partial _- T_n$ as before.
\end{de}

The requirement that the trivializing link of each 1-stage tower has nonseparating projection to $F$ is only needed for the most general cases of Theorems~\ref{onion} and \ref{onion2} (controlling inner layers for a Stein onion via Addendum~\ref{embFHAdd}(b), relative to a preassigned tower structure). Its main consequence is that every tower $T_n$ embedded in an open 4-manifold can then be identified as a smooth regular neighborhood of a 1-complex: For the first stage $T_1$, such a 1-complex is obtained from the projection of the trivializing link by adding arcs so that the complement in $F$ becomes an annulus. These new arcs connect the second stage of $T_2$ into a single 1-stage tower such that $T_1$ lies in a collar of the boundary. Continuing up the tower produces the required 1-complex with regular neighborhood $T_n$.

An {\em infinite\/} tower $T$ is a (noncompact) union $\bigcup_{n=1}^\infty T_n$ with $T_1 \subset T_2 \subset T_3 \subset\cdots$ as in Definition~\ref{gct}, with $\partial_\pm T$ as before. Since each 1-stage tower canonically embeds in a 2-handle as the complement of its trivializing 2-handles, every tower embeds in a 2-handle, canonically if the tower is finite. An infinite tower $T$ determines a based tree, where each vertex $v$ corresponds to a 1-stage tower in $T$, whose trivializing link has components corresponding to the outward edges from $v$.

\begin{de} \label{gch}
A {\em (closed) generalized Casson handle\/} $G$ is an infinite generalized Casson tower such that each infinite branch of the corresponding tree has infinitely many vertices corresponding to immersed disks. The subset $G^o=G-\partial_+ G$ is the corresponding {\em open generalized Casson handle\/}.
\end{de}

\noindent Although some of our theorems apply in this generality at no extra cost, it suffices to consider several special cases. Casson handles are the case in which all surfaces are disks and all trivializing links are standard.  These were Freedman's original tools, and were exclusively used in the author's previous work applying Freedman theory
 to Stein structures because of their combinatorial simplicity. In the present paper, we will mainly focus on {\em Freedman handles\/} (Definition~\ref{fh}). These include the infinite towers used in \cite{FQ}, which simplified and strengthened Freedman's work. In that text, each immersed disk was required to have $k_+=k_-$, which simplified the discussion of framings and allowed a slightly different notion of standard trivializing link for the corresponding 1-stage towers. As we will see, however, Stein theory requires disks with $k_+>k_-$, necessitating our present generalization of the theory.

Definition~\ref{gch} is motivated by the following proposition which, in the context of Casson handles, also motivated Casson's early work on the subject.

\begin{prop} \label{1conn}
Every generalized Casson handle $G$ is simply connected.
\end{prop}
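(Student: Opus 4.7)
The plan is to show every loop $\gamma$ in $G$ is nullhomotopic. By compactness, $\gamma$ lies in some finite subtower $T_n$. Definition~\ref{gct} ensures that attaching the stage-$n$ trivializing 2-handles to $T_n$ yields a standard (hence contractible) 2-handle, so van Kampen implies that $\pi_1(T_n)$ is normally generated by the homotopy classes of the trivializing circles of $T_n$---equivalently, by the attaching circles of the stage-$(n+1)$ 1-stage towers. Thus it suffices to prove the stronger statement: \emph{for every vertex $v$ of the tree of $G$, the attaching circle of the associated 1-stage tower $T_1^{(v)}$ is nullhomotopic in the sub-tower $G_v \subset G$ rooted at $v$.} Here $G_v$ is itself a generalized Casson handle, since any infinite branch of its tree is a tail of an infinite branch of $G$'s tree and hence still contains infinitely many immersed-disk vertices.

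Suppose the stronger statement fails at some vertex $v_1$. I will produce an infinite descending chain $v_1, v_2, v_3, \ldots$ in the tree (with $v_{i+1}$ a child of $v_i$) such that the attaching circle $c_i$ of $T_1^{(v_i)}$ is not nullhomotopic in $G_{v_i}$. Given such $v_i$, the circle $c_i$ bounds the core surface $F_i$ of $T_1^{(v_i)}$. If $F_i$ were an immersed disk, then $c_i$ (the image of $\partial D^2$ under the core map $D^2 \to T_1^{(v_i)}$) would be nullhomotopic in $T_1^{(v_i)} \subset G_{v_i}$, contradicting the assumption; so $F_i$ has positive genus $g$, and $c_i$ is homotopic in $T_1^{(v_i)}$ to the standard product of commutators $\prod_{j=1}^{g}[a_j,b_j]$ of genus generators of $\pi_1(F_i)$. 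Non-nullhomotopy of this product in $G_{v_i}$ forces some generator---say $a_j$---to be non-nullhomotopic in $G_{v_i}$ (else all commutators, and hence the product, would be trivial there). But $[a_j] \in \pi_1(T_1^{(v_i)})$ is a product of conjugates of the trivializing circles of $T_1^{(v_i)}$, since those circles normally generate $\pi_1(T_1^{(v_i)})$ (again by van Kampen applied to the trivializing 2-handles of $T_1^{(v_i)}$). Hence some trivializing circle $c'$ of $T_1^{(v_i)}$ is non-nullhomotopic in $G_{v_i}$. Taking $v_{i+1}$ to be the child of $v_i$ whose 1-stage tower is attached along $c'$, we have $c' = c_{i+1}$ non-nullhomotopic in $G_{v_{i+1}} \subset G_{v_i}$, completing the inductive step.

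Each $v_i$ with $i \geq 1$ thus corresponds to a positive-genus core. Extending backwards from $v_1$ to the root, the chain $v_1, v_2, v_3, \ldots$ forms the tail of an infinite branch of $G$'s tree containing no immersed-disk vertex past $v_1$. Hence that branch has only finitely many immersed-disk vertices in total, contradicting the definition of a generalized Casson handle. This contradiction establishes the stronger claim and hence the proposition. The main obstacle is the bookkeeping relating three classes of loops living in a single 1-stage tower---its attaching circle, the genus generators of its core surface, and its trivializing circles---but all three sit inside the free group $\pi_1(T_1^{(v_i)})$ and are related by the two applications of van Kampen described above.
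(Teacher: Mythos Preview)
Your proof is correct and follows essentially the same strategy as the paper's: use compactness to land in a finite subtower, use that trivializing circles normally generate its fundamental group, and then inductively descend along an infinite branch whose cores are never disks, contradicting Definition~\ref{gch}. The only difference is that your passage through the commutator expression $c_i \simeq \prod [a_j,b_j]$ and then from a nontrivial $a_j$ to a nontrivial trivializing circle is an unnecessary detour---since the trivializing circles already normally generate $\pi_1(T_1^{(v_i)})$, you can write $c_i$ itself directly as a product of their conjugates, which is what the paper does.
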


\begin{proof}
 Every loop $\beta$ in $G$ has compact image, so it lies in some finite subtower $T_n$ of $G$. Now $T_ {n+1}$ is made from $T_n$ by adding 1-stage towers along its trivializing link $L$. Adding 2-handles to $T_n$ along $L$ yields a simply connected space (a 2-handle), so $\beta$ in $\pi_1(T_n)$ is a product of conjugates of circles of $L$ (suitably attached to the base point). If $\beta$ is nontrivial in $G$ then so is at least one such circle $c$ of $L$. In particular, the 1-stage tower attached to $c$ cannot be made from an immersed disk. Repeating the argument in the infinite subtower $T$ of $G$ attached along $c$, using nontriviality of $c$ in $T$, locates a component of stage $n+2$ whose core is not a disk. Iterating, we construct an infinite branch of the tree contradicting Definition~\ref{gch}.
\end{proof}

\noindent The definition could be weakened slightly, using the fact that double points in the surface attached to $c$ will not contribute to the expansion of $c$. However, the hypothesis cannot be eliminated, since {\em infinite gropes\/} (towers of embedded surfaces of positive genus) are known not to be simply connected.

One can actually make a much stronger assertion about open generalized Casson handles: They are always homeomorphic to open 2-handles (although typically not  diffeomorphic, due to Donaldson's work). This assertion can be proved using Freedman's topological proper h-Cobordism Theorem, although our methods below generate a proof more closely related to Freedman's original approach (Corollary~\ref{homeo}). For now, we only need a weaker statement:

\begin{lem} \label{2h}
Every generalized Casson handle $G$ contains a topologically embedded, collared 2-handle $h_G$ such that $\pi_1 (G-h_G) \cong \zed$, generated by a meridian $\mu$ of its attaching circle $C$.
\end{lem}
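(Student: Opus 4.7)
The plan is to construct $h_G$ as $T_1$ capped off by topological 2-handles spanning the components of the first trivializing link, then to read off $\pi_1(G-h_G)$ from the tower filtration. The geometric idea is that while \emph{smooth} trivializing 2-handles would convert $T_1$ into a standard 2-handle by definition of the trivializing link, in $G$ those trivializing 2-handles must be built topologically out of the infinite sub-towers above them.

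Concretely, let $L$ be the trivializing link of $T_1$ and, for each $\ell\in L$, let $T^\ell\subset G$ be the infinite subtower attached to $\ell$. Each $T^\ell$ inherits the infinitely-many-immersed-disks hypothesis from the branches of $G$ through it, and is therefore, up to relabelling of stages, itself a generalized Casson handle attached along $\ell$. I would argue --- this is the topological heart of Freedman theory, an iterated reimbedding/design argument that uses the immersed disks of successive stages to build a convergent sequence of nested thickenings --- that each $T^\ell$ contains a topologically embedded, collared 2-handle homeomorph $H_\ell$ with attaching circle $\ell$ and the canonical framing, lying away from $\partial_-T^\ell$. The bicollars come from the extra embedded-surface layers distinguishing generalized Casson handles from pure Casson handles, which survive the shrinking. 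Setting
\[
h_G \;=\; T_1 \,\cup\, \bigcup_{\ell\in L} H_\ell
\]
and observing that this is built from $T_1$ by the very trivializing-handle recipe that (in the smooth case) converts $T_1$ into a standard 2-handle, we see that $h_G$ is a topologically embedded, collared 2-handle in $G$ with attaching circle $C$ and canonical framing.

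For the fundamental group, decompose $G=h_G\cup B$ with $B=\overline{G-h_G}$, so $h_G\cap B=\partial_+h_G\cong D^2\times S^1$. Proposition~\ref{1conn} gives $\pi_1(G)=1$, so van Kampen forces $\pi_1(B)$ to be normally generated by the image of $\pi_1(\partial_+h_G)=\Z$, which is carried by a meridian of the topological core of $h_G$ that is homotopic in $\partial_-G$ to the meridian $\mu$ of $C$. That $\mu$ has infinite order in $B$ follows from its algebraic intersection number $\pm1$ with the 2-disk core of $h_G$. To promote ``normally generates'' to ``generates'', I would induct up the filtration $T_1\subset T_2\subset\cdots$, showing that a meridian of any stage-$n$ surface core is null-homotopic in $(T_{n+1}\cup\bigcup_\ell H_\ell)-h_G$: the $(n{+}1)$-stage 1-towers attached to its trivializing link components, together with the $H_\ell$ already built, provide disks realizing the nullhomotopy in the complement. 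The nonseparating-projection condition built into Definition~\ref{gct} ensures that loops arising from the genus of higher-stage cores are killed one stage above. Passing to the direct limit, $\pi_1(B)\cong\Z$, generated by $\mu$.

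The main obstacle is producing the spanning handles $H_\ell$ with canonical framing and bicollars intact. This \emph{is} Freedman's deep theorem, here executed in a slightly generalized form in which the disks in the tower may be unbalanced ($k_+\ne k_-$) and higher-stage cores may have positive genus; the framing bookkeeping (the $\pm2(k_+-k_-)$ discrepancy between canonical and product framings) and the preservation of the bicollar-producing embedded-surface layers both demand care. The $\pi_1$ computation, in contrast, is largely formal once the ``next stage kills the meridian'' principle is established --- but it does rely on the nonseparating hypothesis in Definition~\ref{gct} to furnish dual disks at every stage.
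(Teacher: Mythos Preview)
Your construction has a circularity at its heart: you produce each $H_\ell\subset T^\ell$ by invoking ``Freedman's deep theorem,'' but in the generality of this paper no such off-the-shelf result is available. Freedman's original theorem treats Casson handles (all disks, standard trivializing links); the \cite{FQ} tower machinery requires $k_+=k_-$ at each disk stage, which the paper explicitly cannot assume. Since each $T^\ell$ is itself a generalized Casson handle in exactly the paper's sense, asserting that it contains a collared topological 2-handle is the statement of Lemma~\ref{2h} applied one level up. The paper's Corollary~\ref{homeo}, which would give what you want, is proved \emph{using} Lemma~\ref{2h} via Theorem~\ref{embFH} and Section~\ref{Onionproof}, so citing it here is also circular.

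The paper instead applies Freedman's Disk Embedding Theorem \cite{FQ} directly, which has concrete, checkable hypotheses (good ambient $\pi_1$, immersed disks with transverse spheres, vanishing $\zed[\pi_1]$-valued intersections), and the real work is setting these up. It first proves $\pi_1(G-T'_n)\cong\zed$ for every $n$ and that all higher-stage meridians die there; this is what makes the transverse spheres exist and the equivariant intersection numbers vanish. Note also that the paper builds $h_G$ on $T'_2$, not $T_1$: the trivializing circles for $T'_2$ are second-stage objects, so their meridians are already nullhomotopic in $G-\inter T'_2$, which is exactly what is needed to manufacture the dual spheres. As a bonus, the Embedding Theorem returns transverse spheres for the output handles $h_i$, immediately yielding $\pi_1(G-h_G)\cong\pi_1(G-T'_2)\cong\zed$. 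This sidesteps your inductive ``promote normally-generates to generates'' step, which as written shows various meridians die but does not establish that $\pi_1(B)$ is abelian.
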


To prove this, we invoke Freedman's fundamental Disk Embedding Theorem \cite{FQ}, the successor to his original theorem that Casson handles are 2-handle homeomorphs. Given a collection of disks immersed in a 4-manifold $X$, with their boundaries embedded in $\partial X$, the Embedding Theorem gives conditions guaranteeing the existence of disjointly embedded topological 2-handles whose cores have the same framed boundaries. The fundamental group of $X$ must be ``good'' (e.g.\ abelian), and there must be dual spheres with suitable properties. Setting this up in $G$ requires some work involving Freedman's (and Casson's) original techniques.

\begin{proof}
For each $n$, let $T_n$ denote the first $n$ stages of $G$, and let $T'_n \subset T_n$ be obtained by removing a collar of $\partial_+ T_n$ (so that $T'=\bigcup_{n=1}^\infty T'_n$ is an infinite tower isotopic to $G$). Our first task is to show that $\pi _1 (G-T'_n ) \cong \zed$. But $G$ is obtained from $T_n$ by attaching generalized Casson handles to its trivializing link $L$. Since these generalized Casson handles are simply connected, the fundamental group is unchanged if we replace them by 2-handles, transforming $G$ into a 2-handle $h_n$. That is, $\pi _1 (G-T'_n ) \cong \pi _1 (h_n -T'_n )$. But $h_n -T'_n$ is obtained from $T_n - T'_n = (0,1] \times \partial_+ T_n$ by adding 2-handles along $L$. Looking at this relative handlebody upside down identifies it as $[0,1] \times \partial_+ h_n$ union 2-handles (with some boundary removed). Since $\pi_1 (\partial_+ h_n) \cong \zed$, the desired group $\pi_1 (G-T'_n) $ must be a quotient of $\zed$, generated by $\mu$. But $\mu$ cannot have finite order $k$; otherwise a nullhomology of $k\mu$ could be combined with $k$ disks intersecting $C$ to create a surface in $h_n$ intersecting the core of $T_1$ $k \ne 0$ times, contradicting triviality of ${\rm H}_2 (h_n )$. (Recall the relative intersection pairing ${\rm H}_2 (h_n ) \times {\rm H}_2 (h_n ,\partial h_n ) \to \zed $.) Thus $\pi _1 (G-T'_n ) \cong {\rm H}_1 (G-T'_n ) \cong \zed$, generated by $\mu$, or more generally by any meridian of the first stage core (since these are all homologous up to sign). If $n>1$ and $\mu'$ is any meridian of a second stage core $F$, then it is nullhomologous and hence nullhomotopic in $G-T'_n$, since a 2-sphere centered at a point on $\partial F$ provides a homology from $\mu'$ to a pair of oppositely oriented meridians of the first stage (Figure~\ref{sphere}). Similarly, all higher stage meridians are nullhomotopic.

\begin{figure}
\labellist
\small\hair 2pt
\pinlabel $F$ at 82 110
\pinlabel $\mu$ at 9 54
\pinlabel $-\mu$ at 141 54
\pinlabel $\mu'$ at 99 88
\pinlabel $T'_1$ at 161 27
\endlabellist
\centering
\includegraphics{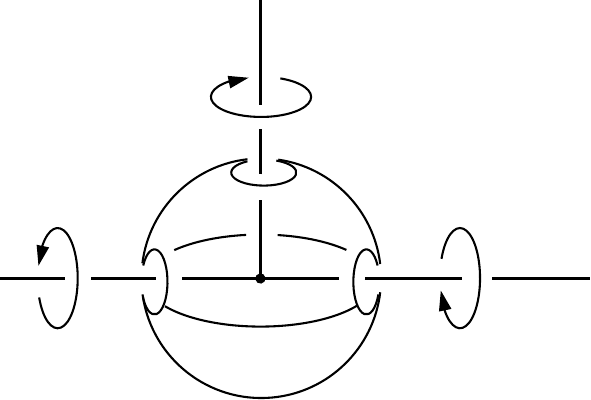}
\caption{A nullhomology of a second-stage meridian. (Here $F$ and $T'_1$ extend in the fourth direction.)}
\label{sphere}
\end{figure}

We will apply Freedman's Embedding Theorem, starting in the disconnected manifold $X=G-(T_1 \cup \inter T'_2)$, and using the trivializing link $L$ for $T'_2$ in $\partial X$. Since each component $C_i$ of $L$ is nullhomotopic in the attached generalized Casson handle of $T'$, it bounds an immersed disk $D_i$ whose interior lies in $T'-T'_2 \subset X$. We add double points so that $k_+(D_i)=k_-(D_i)$. By compactness, the family $\{ D_i \}$ lies in $T'_n$ for some $n$. A meridian to each $C_i$ is nullhomotopic in $X-\inter T'_n \subset X- \bigcup D_i$ (by the previous paragraph applied to the disjoint union of generalized Casson handles $G-T_1$). Combining this nullhomotopy with a meridian disk for $C_i$, we obtain a sphere $S_i$ immersed in $X$, intersecting $D_i$ once (transversely) and disjoint from every other $D_j$. Thus $\{ S_j \}$ is a collection of {\em transverse spheres\/} for the disks $\{D_i \}$ as required for the Embedding Theorem. We can assume each $k_+(S_i)=k_-(S_i)$, but need to scrutinize the intersections among the spheres. Since $G$ embeds in a 2-handle (whose intersection form is identically zero), all of the intersection numbers $S_i \cdot S_j$ vanish. Extra care is required here, though, since the hypotheses of the Embedding Theorem refer to more subtle intersection numbers with coefficients in $\zed [\pi _1 (X)]$. These involve classes in $\pi _1 (X) \cong \zed $ realized by loops in the image of the immersion of $\bigcup S_j$. (These loops can change sheets at the double points, and may be nontrivial in $\pi _1 (X)$.) We now pass to the larger space $G- \inter T'_2$ in which all these loops are trivial by the previous paragraph. The required intersection numbers now vanish in $\zed [\pi _1 (G- \inter T'_2)]$, so the Embedding Theorem applied to $\{ D_i \} $ in $G- \inter T'_2$  yields a topologically embedded collection $\{ h_i \} $ of 2-handles in $G- \inter T'_2$ attached to $L$ with the correct framings. (Since each $k_+(D_i)=k_-(D_i)$, we can ignore the distinction between the canonical framing and that induced by the normal bundle of $D_i$.) We can assume each $h_i$ is collared (after replacing it by a thinner handle if necessary). Then $h_G=T'_2 \cup  \bigcup h_i$ is our required topological 2-handle. The Embedding Theorem guarantees transverse spheres for the handles $h_i$ (see \cite{R}), so their meridians are nullhomotopic in $G-h_G$. Thus $\pi_1 (G-h_G) \cong \pi_1 (G-T'_2) \cong \zed$, generated by $\mu$ as required.
\end{proof}

We next examine when a generalized Casson handle can be smoothly embedded in a 4-manifold so that its closure is a topologically embedded 2-handle. This will lead to the main goal of the section, finding such an embedding in an arbitrary 2-handle homeomorph.

\begin{de} \label{conv}
An embedding $G \subset X$ of a generalized Casson handle into a 4-manifold is called {\em convergent\/} if for each $\epsilon >0$ there is an $n$ for which the components of $G-T_n$ lie in the interiors of disjoint closed topological balls of diameter $<\epsilon$ in $X$.
\end{de}

\noindent It follows that such a $G$ has compact closure in $X$, so the definition is independent of the choice of distance function. In fact, the closure is given by the endpoint compactification of $G$, so that $\cl G -G$ is in bijective correspondence with the infinite branches (starting at the base point) of the tree for $G$. We denote the endpoint compactification of a generalized Casson handle $G$ by $\bar G$, and set $\partial_+ \bar G =\bar G-G^o$ and $\partial\bar G=\partial_+\bar G\cup \partial_- G$. These need not be manifolds in general, even topologically.

\begin{de} \label{fh}
A generalized Casson handle $G$ will be called a {\em Freedman handle\/} if $\bar G$ is homeomorphic to a 2-handle. It will be called {\em standard} if for some $n$, each core surface beyond the $n^{th}$ stage is either embedded or a disk, and all trivializing links beyond the $n^{th}$ stage are standard.
\end{de}

\begin{prop} \label{fh1}
\rm a) There is a sequence $(a_k)$ of nonnegative  integers such that a standard generalized Casson handle $G$ must be a Freedman handle provided that for each infinite branch of the corresponding tree and each $k \in \zed^+$, the $k^{th}$ disk along the branch is separated from the next disk by at least $a_k$ vertices corresponding to embedded surfaces.
\item[b)] Let $G \subset X$ be a convergent Freedman handle with $\bar G\cap \partial X=\partial_- G$. Then $\bar G$ is collared if and only if for each $x \in \bar G -G$, every neighborhood $U$ of $x$ in $X$ contains a neighborhood $V$ for which inclusion $V-\bar G \subset U-\bar G$ is $\pi_1$-trivial.
\item[c)]  Every generalized Casson handle $G$ inherits a quotient map $\partial h\to\partial\bar G$, where $h$ is a 2-handle, that restricts to a diffeomorphism of the attaching regions.

\end{prop}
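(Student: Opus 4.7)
My plan is to handle part~(c) first, as it is essentially formal and will feed into part~(a). Every generalized Casson handle $G$ sits smoothly in a 2-handle $h$: the first stage $T_1$ together with its stage-1 trivializing 2-handles $B_1$ is a 2-handle, and inductively each trivializing 2-handle at stage $n$ is itself reinterpreted (after an interior smooth shrink preserving attaching regions) as a 1-stage tower plus smaller trivializing 2-handles at stage $n+1$. Thus for every $n$ we have $h = T_n \cup B_n$ with $B_{n+1}\subset\inter B_n$. Intersecting with $\partial h - \partial_- h$ and descending along an infinite branch $\beta$ of the tree gives a compact set $K_\beta\subset\partial h$; distinct branches eventually fall into disjoint trivializing 2-handles, so $\{K_\beta\}$ is a null family of pairwise disjoint compacta. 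Let $\sim$ collapse each $K_\beta$ to a point. Every remaining point of $\partial h$ lies either in $\partial_-h=\partial_-G$ or, for some $n$, in $\partial_+T_n$ minus the attaching regions of the stage-$(n+1)$ trivializing 2-handles, i.e.\ in $\partial_+G$; and the classes $[K_\beta]$ correspond bijectively to the endpoints of $\bar G$. The induced map $\partial h\to\partial\bar G$ is then a continuous surjection, hence a quotient since $\partial h$ is compact, and it restricts to the identity on $\partial_-h$.

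For part~(a), I plan to upgrade the above smooth embedding so that it is convergent in the sense of Definition~\ref{conv}; once this holds, the closure $\bar G=\cl G$ in $h$ is the target of the quotient from (c), and Lemma~\ref{2h} combined with Freedman's topological 2-handle recognition will identify $\bar G$ with a 2-handle. The convergence itself comes from a thinning argument exploiting the \emph{standard} hypothesis: an embedded-surface stage is a smooth regular neighborhood of a 1-complex (as noted after Definition~\ref{gct}), so it can be smoothly reimbedded into an arbitrarily thin tubular neighborhood of that 1-complex while preserving attaching regions and the standard form of its trivializing link. An immersed-disk stage enjoys no such freedom, so the sequence $a_k$ is chosen to specify how many embedded-surface stages must follow the $k$-th disk stage in order to drive the stage-$n$ trivializing 2-handle diameters below $1/n$. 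The main technical burden here is producing a clean quantitative estimate for how much a single standard 1-stage tower on an embedded positive-genus surface can be thinned, given its prescribed standard 2-handle attachments; once that is in hand, the required $a_k$ is extracted by a routine geometric series.

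Part~(b) is about local flatness of $\partial_+\bar G$ in $X$ at the Cantor set $\bar G-G$. The forward direction is direct: a bicollar $(-1,1)\times\partial_+\bar G\hookrightarrow X$ lets us, around any $x\in\partial_+\bar G$, choose a simply connected neighborhood $W$ of $x$ in the topological 3-manifold $\partial_+\bar G$ (a manifold because $\bar G$ is a topological 2-handle), take $V=(-\delta,\delta)\times W\subset U$, and observe that $V-\bar G=(0,\delta)\times W$ is simply connected, so $V-\bar G\to U-\bar G$ is $\pi_1$-trivial. For the converse, $\partial_+\bar G$ is already smoothly bicollared on the complement of $\bar G-G$, as it is there the boundary of the smooth 4-manifold $G^o$. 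At each $x\in\bar G-G$ the hypothesized $\pi_1$-triviality of $V-\bar G\to U-\bar G$ is exactly the 1-LCC condition for the codimension-1 submanifold $\partial_+\bar G$ in $X$ at $x$. My plan is to invoke the bicollaring / locally flat approximation theorem for 1-LCC codimension-1 topological submanifolds of 4-manifolds (Ancel--Cannon and Quinn, as packaged in \cite{FQ}) to produce a local bicollar near each such $x$, and then patch these with the global smooth bicollar away from $\bar G-G$ by uniqueness of collars. The main obstacle will be the coherent patching across the whole Cantor set into a single continuous bicollar, which I expect to need a controlled-topology version along the lines of Quinn's machinery.
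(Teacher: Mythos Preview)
Your treatment of (c) is essentially the paper's argument: the nested trivializing 2-handles meet $\partial_+h\approx D^2\times S^1$ in a nested family of solid tori (Bing or Whitehead doubles according to whether the stage uses an embedded surface or an immersed disk), and collapsing each component of the resulting Bing--Whitehead compactum $Q$ gives the quotient map. Your (b) is also on target; the reference you want is \cite[9.3A]{FQ}, which gives the collar globally from the local $\pi_1$-condition, so your patching worry is unnecessary. One omission: at points of $\bar G-G$ you must also check the \emph{inside} collar condition, which the paper handles by observing that $V$ can be chosen so that $V\cap G^o$ is a generalized Casson handle interior, hence simply connected.

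Part (a), however, has a real gap. First, your thinning mechanism does not distinguish embedded-surface stages from immersed-disk stages: a self-plumbed $D^2\times D^2$ is \emph{also} a regular neighborhood of a 1-complex (the plumbing adds a 1-handle), so by your own criterion it could be thinned just as well. The genuine distinction lives in the 3-dimensional boundary picture from (c): an embedded-surface stage contributes a \emph{Bing} double inside its solid torus, while a disk stage contributes a \emph{Whitehead} double. The point is that Bing doubles can be isotoped to have small diameter inside their solid torus and Whitehead doubles cannot; the precise criterion for when a mixed Bing--Whitehead nest has totally disconnected intersection $Q$ is the Ancel--Starbird theorem \cite{AS}, and this is where the sequence $(a_k)$ comes from (indeed $\sum a_k/2^k=\infty$ is necessary and sufficient). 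Your ``routine geometric series'' is not a substitute for this decomposition-space result.

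Second, even granting convergence of $G$ in $h$, your endgame does not work as stated. Convergence gives $\cl G=\bar G$, but it does not by itself make $\partial_+\bar G$ a manifold: that requires the quotient map from (c) to be a homeomorphism, i.e.\ each $K_\beta$ to be a point, i.e.\ $Q$ totally disconnected. Lemma~\ref{2h} produces a 2-handle $h_G$ \emph{inside} $G$ and says nothing about $\bar G$; there is no ``Freedman 2-handle recognition'' that converts this into $\bar G\approx D^2\times D^2$ without first knowing $\bar G$ is a manifold. The paper's route is: use Ancel--Starbird to get $Q$ totally disconnected, so the quotient map is a homeomorphism and $\partial_+\bar G\approx D^2\times S^1$; collar the inside (automatic as above), so $\bar G$ is a compact topological 4-manifold with boundary $S^3$ and contractible interior; then Freedman's 4-dimensional Poincar\'e conjecture identifies $\bar G$ with $D^4$.
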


\noindent A sequence $(a_k)$ has the above property if and only if $\sum a_k/2^k$ is infinite \cite{AS}. It can begin with an arbitrarily long string of zeroes since every finite generalized Casson tower can be topologically trivialized by attaching Freedman handles, but eventually embedded surfaces must dominate the construction. In particular, Casson handles, built entirely from disks, are not Freedman handles.

\begin{proof}
We first prove (c) and that the condition of (a) implies $\partial_+ \bar G$ is homeomorphic to $D^2 \times S^1$. The proof of this is essentially contained in  \cite[4.2]{FQ} (particularly the lemma); see also \cite{AS}. Attaching trivializing 2-handles to the $n$-stage subtower $T_n$ of $G$ yields a 2-handle $h_n \approx D^2\times D^2$ with $\partial_+ h_n \approx D^2 \times S^1$. The new 2-handles intersect $\partial_+ h_n$ in an embedded collection of solid tori whose complement is $\partial_+ T_n$ minus a neighborhood of its trivializing link. We obtain each collection of solid tori by an iterated doubling procedure, which is easiest to see when all trivializing links are standard: For $T_1$, we first introduce disjoint solid tori parallel to $\{ 0 \} \times S^1 \subset D^2 \times S^1$, one for each double point or torus summand of the core, then replace each by its Whitehead double (for a double point) or Bing double (for a torus summand) as in Figure~\ref{double}. (For a negative double point, use the mirror image of Figure~\ref{double}(a).) Each time we add a 1-stage tower to $T_n$, we repeat the previous procedure on the corresponding solid torus, first passing to parallel solid tori inside the given one, then suitably doubling (see also \cite[Section~6.1]{GS}). Ultimately, we obtain an infinite nested collection of unions of solid tori in $D^2 \times S^1$ whose intersection $Q$, a {\em Bing--Whitehead compactum}, has complement diffeomorphic to $\partial_+ G$. Endpoint compactifying adds a point for each component of $Q$, so $\partial_+ \bar G$ is homeomorphic to the quotient space obtained from $D^2 \times S^1$ by collapsing each component of $Q$ to a point. By \cite{FQ} (see also \cite{AS}), if $G$ respects a sequence $(a_k)$ that grows sufficiently rapidly, then we can arrange the Bing doubles inside their solid tori in such a way that $Q$ is totally disconnected, so no collapsing is necessary and the quotient map is a homeomorphism. (In contrast, a Casson handle involves only Whitehead doubling, so the resulting Whitehead continuum $Q$ will be 1-dimensional, and $\partial_+ \bar G$ will not be a manifold.) If the trivializing links of $G$ only become standard after the $n^{th}$ stage, the solid tori for $T_n$ will be more complicated, but the previous argument applies inside each of them. When $G$ is any generalized Casson handle, the same argument still exhibits $\partial G$ as the quotient of $\partial h$ by collapsing an infinite nest of unions of solid tori.

\begin{figure}
\labellist
\small\hair 2pt
\pinlabel a) at -5 5
\pinlabel b) at 167 5
\endlabellist
\centering
\includegraphics{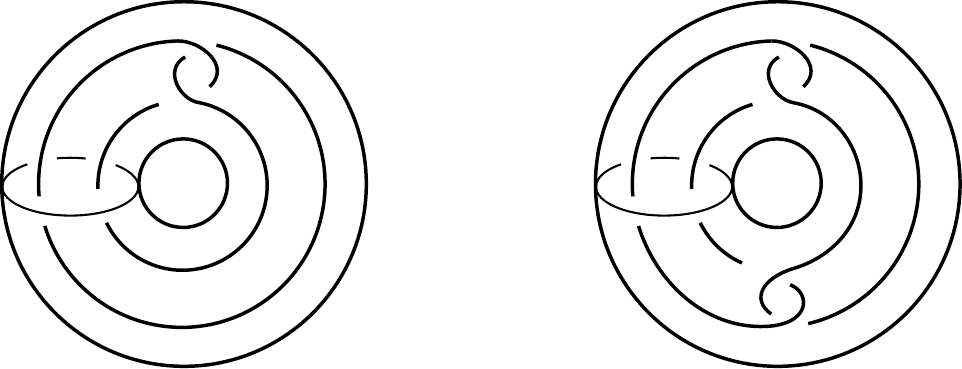}
\caption{Whitehead (a) and Bing (b) doubling.}
\label{double}
\end{figure}

Now suppose $G\subset X$ is a convergent generalized Casson handle with $\bar G\cap \partial X=\partial_- G$, and $\partial_+\bar G$ is a topological manifold. We determine when $\partial_+\bar G$ is bicollared, using \cite[9.3A]{FQ}. Essentially, we must collar the two sides of $\partial_+ \bar G$ separately by verifying a local condition at each $x \in \partial_+ \bar G$. The condition for collaring the outside is restated in (b) of the proposition, and is automatically satisfied at each $x \in \partial_+ G$, implying (b).  The analogous condition for collaring the inside is automatically satisfied for any $\bar G$: For $x \in \bar G -G$, we can always choose $V$ so that it intersects $G^o$ in a generalized Casson handle interior, which is simply connected. Note that by \cite{FQ}, each collar on $\partial_+\bar G$ extends a preassigned collar on $\partial X$.

It remains to complete the proof of (a). Under its hypotheses, we have already identified $\partial \bar G$ with $S^3$ with unknotted attaching circle, and collared it inside $\bar G$. It follows that $\bar G$ is a compact topological manifold with boundary $S^3$. But $\inter \bar G = \inter G$ is contractible (since $\pi_1(G)$ and ${\rm H}_*(G)$ are trivial), so by Freedman $\bar G$ is homeomorphic to a 4-ball $D^2 \times D^2$, and we can assume the attaching regions correspond.
\end{proof}

Our main theorem of this section produces a Freedman handle inside any 2-handle homeomorph. It is an application of Freedman's work that was essentially known to Quinn in the 1980s; a slightly weaker version for Casson handles is given in Quinn's paper \cite[Proposition~2.2.4]{Q}. See also \cite[Theorem~5.2]{JSG} for a more leisurely exposition of the latter. We will call a generalized Casson handle a {\em refinement} of another such $G$ if it is obtained from $G$ by adding genus and double points to the surfaces of $G$, then attaching generalized Casson handles. In the contexts of Freedman handles (standard or otherwise) we refine preserving the given context.

\begin{thm}\label{embFH} Every closed 2-handle homeomorph $H$ contains an embedded, convergent Freedman handle $G$ whose closure is topologically ambiently isotopic rel $\partial H$ to a standard topological product neighborhood $h_0$ of the topological core of $H$. The set of such $G$ (for fixed $H$) is closed under refinement, and contains Freedman handles that are standard at every stage, as well as refinements of any preassigned Freedman handle. If $H$ is a standard 2-handle, then every Freedman handle has such an embedding.
\end{thm}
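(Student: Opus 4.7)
The plan is to build $G$ inductively by stages, constructing its finite subtowers $T_n$ smoothly inside $H^o$ while verifying at each stage that the complement in $H$ contains $2$-handle homeomorphs attached to the trivializing link. First I would fix once and for all a combinatorial scheme for $G$---the stage tree, genera, signed numbers of double points, and isotopy classes of trivializing links---chosen to match the desired flavor of Freedman handle (standard at every stage, or a refinement of a preassigned Freedman handle). The scheme is required to satisfy the hypothesis of Proposition~\ref{fh1}(a) along every infinite branch, so that provided the construction succeeds, the limit will in fact be a Freedman handle. I would also fix a sequence $\epsilon_n \searrow 0$ to enforce convergence.

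For the base case, the attaching circle $C$ is nullhomotopic in the contractible manifold $H^o \cong D^2 \times \inter D^2$, so it bounds a smoothly immersed disk there; I would correct the framing to the canonical one by introducing canceling pairs of double points, then adjust genus and double-point counts to match the prescribed first-stage data via the standard local models, and take a closed smooth regular neighborhood to obtain $T_1$. Inductively, given $T_n \subset H^o$ with trivializing link $L_n \subset \partial_+ T_n$, the main task---and what I expect to be the principal obstacle---is producing disjointly embedded closed $2$-handle homeomorphs $H_\ell \subset H - \inter T_n$, one for each $\ell \in L_n$, each attached with canonical framing and of diameter less than $\epsilon_n$. Given these, one simply reruns the base-case construction inside each $H_\ell$ to get the $(n{+}1)$-st stage. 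To produce the $H_\ell$, I would build in parallel with $T_n$ an ambient homeomorphism of pairs $(H, T_n) \to (h_0, T_n^{\mathrm{std}})$, where $T_n^{\mathrm{std}}$ is a standard $n$-stage tower in a standard smooth $2$-handle $h_0$; the trivializing $2$-handles of $T_n^{\mathrm{std}}$ in $h_0 - T_n^{\mathrm{std}}$ then pull back to the required $H_\ell$. This homeomorphism is built stage by stage: at step $n{+}1$, each $H_\ell$ produced at stage $n$ plays the role of a topological $2$-handle recognized by Lemma~\ref{2h} together with the Freedman reimbedding technology underlying Proposition~\ref{fh1}(b),(c), and the bicollared topological product structure on $\partial_+ H_\ell$ permits shrinking the next $1$-stage towers below $\epsilon_{n+1}$ in diameter.

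Convergence of $G = \bigcup_n T_n$ then follows from $\epsilon_n \searrow 0$, and the combinatorial hypothesis makes $G$ a Freedman handle of the prescribed flavor by Proposition~\ref{fh1}(a). The sequence of ambient identifications $(H, T_n) \to (h_0, T_n^{\mathrm{std}})$ converges to a homeomorphism carrying $\bar G$ to the closure $\bar G^{\mathrm{std}}$ of the standard tower in $h_0$, which is itself topologically ambiently isotopic rel boundary to the standard product neighborhood of the core; transporting this back yields the required ambient isotopy of $\bar G$ onto $h_0$ rel $\partial H$. Closure under refinement and the flexibility statements follow by varying the combinatorial scheme at each stage---any refinement of an already realized $G$ corresponds to a modified scheme that the same inductive construction realizes, and one can steer the scheme to be standard at every stage or to refine any preassigned Freedman handle. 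For the final clause, when $H$ is a standard smooth $2$-handle no topological identification of pairs is needed, so the inductive construction reduces to embedding an arbitrary preassigned Freedman handle directly in $D^2 \times D^2$ by its tautological smooth model.
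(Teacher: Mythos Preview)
Your inductive outline has the right shape, but there are two substantive gaps where the real work lies.

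First, the base case does not explain how to obtain the pair homeomorphism $(H, T_1) \to (h_0, T_1^{\mathrm{std}})$ that you need in order to pull back the topological 2-handles $H_\ell$ on the trivializing link. Taking an \emph{arbitrary} smoothly immersed disk bounding $C$ (from nullhomotopy in $H^o$) gives a $T_1$, but there is no reason its trivializing link should bound closed 2-handle homeomorphs in $H-\inter T_1$, nor that the pair $(H,T_1)$ is standard. The paper resolves this with Quinn's Handle Straightening Theorem: the topological core of $H$ can be smoothed near its image after a smooth regular homotopy consisting of finger moves, so the resulting smooth immersed disk $F$ comes \emph{equipped} with topological trivializing 2-handles, and $T_2^*=T_1\cup(\text{these 2-handles})$ is by construction ambiently isotopic to $h_0$. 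This is what launches the induction hypothesis $T_n\subset T_{n+1}^*\subset H$ with $T_{n+1}^*$ isotopic to $h_0$ rel $\partial H$. Your reference to ``Lemma~\ref{2h} together with the Freedman reimbedding technology'' at this step is circular: Lemma~\ref{2h} locates a 2-handle inside a generalized Casson handle, not inside an arbitrary 2-handle homeomorph, and cannot supply the $H_\ell$ before any tower has been built. There is also a framing subtlety you skip: when trading double points for genus, one of the two trivializing circles on each torus summand acquires the wrong framing, and the paper repairs this by an explicit handle-slide (Figure~\ref{framing}).

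Second, your endgame---that the sequence of ambient identifications $(H,T_n)\to(h_0,T_n^{\mathrm{std}})$ \emph{converges} to a homeomorphism carrying $\bar G$ onto a standard model---is unjustified. These homeomorphisms are constructed independently and nothing forces them to be Cauchy. The paper takes a different route: it invokes Lemma~\ref{2h} to locate a collared topological 2-handle $h_G\subset G$, then uses the s-Cobordism Theorem with $\pi_1\cong\Z$ to show $h_G$ is ambiently isotopic to $h_0$ rel $\partial H$ (the key point being the computation $\pi_1(H-h_G)\cong\Z$, proved by pushing nullhomologous loops into $G-h_G$ via the unknottedness of $T_{n+1}^*$). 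Then $\bar G$ is shown to be collared by checking the local $\pi_1$-triviality criterion of Proposition~\ref{fh1}(b) against the neighborhood system given by the top-stage 2-handles of the $T_n^*$, and finally $(\bar G,h_G)$ is a standard pair so $\bar G$ is isotopic to $h_G$. Your convergence-of-$G$ argument has a related hole: making the smooth 1-stage towers small inside each $H_\ell$ does not make the next-level topological 2-handles small; the paper arranges this by a separate \emph{smooth} ambient isotopy, using that a finite subtower is a regular neighborhood of a 1-complex and hence unknotted in the simply connected $H$.
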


\begin{Remark}\label{embFHRem} By the same proof and Corollary~\ref{homeo}, we can similarly find refinements of any generalized Casson handle, embedding with interior topologically isotopic (not ambiently) to $\inter h_0$.
\end{Remark}

\begin{proof}
We first construct a 2-stage tower in $H$ whose first stage is smooth and whose second stage consists of topologically embedded 2-handles. If $H$ is standard, we already have this with smooth 2-handles. Otherwise, our main tool is Quinn's Handle Straightening Theorem \cite[2.2.2]{Q}, which addresses when a homeomorphism from a standard $k$-handle can be smoothed near its core $D$ by a topological ambient isotopy. This can always be done when $k\le 1$.  When $k=2$, the homeomorphism can be smoothed near the image of $D$ under a smooth regular homotopy in the domain 2-handle. This homotopy consists of {\em finger moves} whereby we move a small disk of $D$ along an arc intersecting $D$ only at its endpoints, creating a pair of intersections near the other endpoint of the arc. Since $\pi_1(H-D)\cong\zed$ and homotopy implies isotopy for arcs in 4-manifolds, there is a smooth isotopy sending each finger move to a standard model. This consists of a pair of added double points of opposite sign, which have standard trivializing disks. The resulting image in $H$ is the required 2-stage tower, whose first stage surface $F$ is a smoothly immersed disk with $k_+=k_-$. We can add additional double points to independently increase $k_\pm$ as much as desired. Alternativaly, we can lower $k_\pm$ by smoothing double points, increasing $g$ by the number of smoothed double points. The original trivializing disk of each smoothed double point now attaches with the standard framing to one member of a dual pair of trivializing circles. The other circle bounds a disk created by the smoothing, but whose framing is not standard. However, since one 2-handle is correctly framed, the other framing can be adjusted arbitrarily by sliding the second handle over the first. This is most easily seen via Kirby calculus (e.g.~\cite{GS}). A torus summand of $F$ is represented as a Bing link of dotted circles (1-handles) around the attaching circle as in Figure~\ref{framing}(a)
\begin{figure}
\labellist
\small\hair 2pt
\pinlabel a) at 0 5
\pinlabel b) at 119 5
\pinlabel c) at 236 5
\pinlabel 0 at 84 12
\pinlabel 0 at 203 12
\pinlabel 0 at 320 12
\pinlabel $n$ at 44 83
\pinlabel $n$ at 163 83
\pinlabel $n-1$ at 273 82
\endlabellist
\centering
\includegraphics{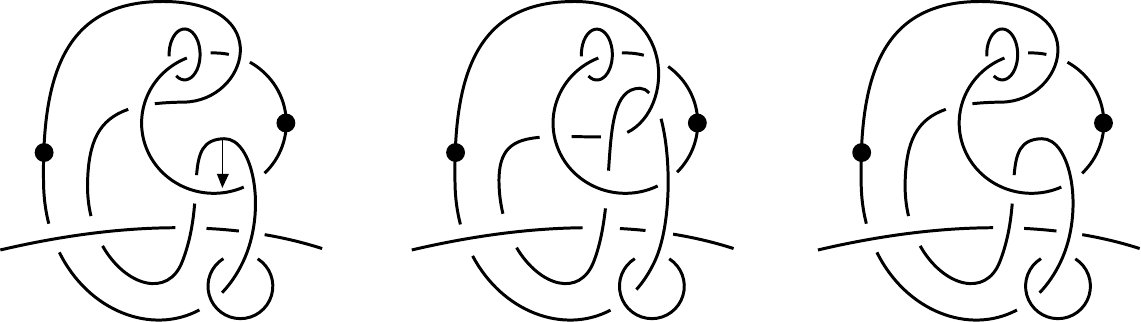}
\caption{Reframing a trivializing 2-handle.}
\label{framing}
\end{figure}
(e.g.~\cite[Section~6.2]{GS}). In this figure, the two meridians represent a standard pair of components of the trivializing link. One meridian has framing 0 (the framing compatible with the normal bundle projection); the other framing differs from the compatible one by an arbitrary integer $n$. If we slide 1-handles as indicated by the arrow, then the resulting dotted circle will have a clasp in it as in (b), and will link the $n$-framed circle. However, we can slide that circle over the 0-framed meridian to remove the latter linking (cf.~\cite[Proposition~5.1.4]{GS}) to obtain (b). (This is the required slide of trivializing disks, and the 1-handle slide changes the parametrization of the torus summand to exhibit the new dual pair as the standard basis for it.) Rotating one foot of the 1-handle given by the round dotted circle (as in \cite[Figure~5.42]{GS}) transfers the left twist from the other dotted circle to the $n$-framed meridian to yield (c). (This rotation is basically a change in the trivialization of the normal bundle to $F$.) Note that (c) differs from (a) only in the framing of one 2-handle. Since the process is reversible, we can inductively change any framing on the second 2-handle to the required compatible one. (A harder unpublished argument of Quinn is more efficient, replacing an entire finger move by a punctured torus.)

We now create a Freedman handle by induction. For the induction hypotheses, assume we have towers $T_n\subset T_{n+1}^* \subset H$, where $T_n$ is smooth and $T_{n+1}^*$ is obtained from it by adding topologically embedded 2-handles along a trivializing link. Also assume that the topological 2-handle $T_{n+1}^*$ is topologically ambiently isotopic  to $h_0$ rel $\partial H$. We already have this when $n=1$. For general $n\ge 1$, apply the previous paragraph to each of the top stage topological 2-handles, reproducing the induction hypotheses with $T_n\subset T_{n+1} \subset T_{n+2}^* \subset T_{n+1}^*$. We obtain an infinite tower $G=\bigcup_{n=1}^\infty T_n$ that we can choose to be a Freedman handle by requiring enough embedded surface stages as in Proposition~\ref{fh1}(a), although it may not be convergent. To obtain a refinement of a particular Freedman handle $G_0$, replace each disk $D$ in the construction by the corresponding surface of $G_0$ with its defining immersion in a 2-handle, smooth near its 1-skeleton by Quinn's theorem, and apply the previous paragraph to a 2-handle neighborhood of the remaining disk.

We next pass to the topological category and show that the 2-handle $h_G \subset G$ of Lemma~\ref{2h} is ambiently isotopic to $h_0$ in $H$, rel $\partial H$. Since $h_G$ is collared, $\cl(H- h_G)$ is a topological 4-manifold whose boundary is canonically homeomorphic to that of $\cl(H- h_0)$. It now suffices to show that $\pi_1 (H -h_G)\cong\zed$, for then a standard argument using the $s$-Cobordism Theorem with $\pi_1\cong \zed$ (e.g., \cite{FQ}) identifies $\cl(H- h_G)$ with $\cl(H- h_0)\approx S^1\times D^3$, and the induced self-homeomorphism of the 4-ball $H$ $\rel \partial H$, sending $h_G$ onto $h_0$, is isotopic to the identity (e.g.~by the Alexander Trick). Since ${\rm H}_1(H-h_G) \cong\zed$ (as is true for any 2-handle in $B^4$, e.g., by Mayer--Vietoris), it suffices to show that every nullhomologous loop $\gamma$ in $H-h_G$ is nullhomotopic. Since $h_G$ is a compact subset of $G$, it lies in some finite subtower $T_n$ of $G$. By construction, $T_n$ lies in $T_{n+1}^*$, which is unknotted, i.e., ambiently isotopic to $h_0$. Thus, we can assume $\gamma$ lies in $T_{n+1}^* -h_G$. Now we push $\gamma$ off of the top-stage embedded 2-handles of $T_{n+1}^*$, so that $\gamma$ lies in $T_n-h_G\subset G-h_G$. But inclusion $G-h_G \hookrightarrow H-h_G$ induces an ${\rm H}_1$-isomorphism, since both groups are $\zed$ generated by a meridian of $h_G$. Since $\gamma$ is nullhomologous in $H-h_G$, it is then nullhomologous in $G-h_G$. But Lemma~\ref{2h} guarantees that $\pi_1 (G-h_G) \cong \zed$, so $\gamma$ is nullhomotopic as required.

To arrange convergence of the Freedman handle, we return to the first paragraph. Starting from the surface $F$ obtained there, we choose a family of disjoint smooth balls in $H$ corresponding bijectively to components of its trivializing link, and construct a smooth ambient isotopy after which each trivializing 2-handle $h$ of the 2-stage tower can be assumed to lie in the corresponding ball: If $h$ is smooth, it is easy to do this. For the topological case, we can now construct a standard Freedman handle $G_h$ inside $h$. By the previous paragraph we can assume, after topologically narrowing $h$, that it lies inside $G_h$. By compactness, $h$ then lies inside some finite subtower $T_h$ of $G_h$, which can be identified as a smooth regular neighborhood of a wedge of circles by the text following Definition~\ref{gct}. Since smooth 1-complexes cannot knot or link in the simply connected 4-manifold $H$, each $T_h$ can now be smoothly isotoped into its corresponding ball. Applying this refined version of the first paragraph during the construction of $G$, we easily obtain a convergent Freedman handle.

Finally, we show that $\bar G$ is topologically ambiently isotopic to $h_G$, and hence to $h_0$. Since $\pi_1 (G-h_G) \cong \zed$ by Lemma~\ref{2h}, our previous reasoning shows that the pair $(\bar G,h_G)$ is homeomorphic to a 2-handle with a standard neighborhood of its core, so it suffices to show that $\bar G$ is collared in $H$. By construction, the trivializing handles comprising the top stage of each $T_n^*$ have maximal diameter approaching 0 as $n \to \infty$, so $\bar G=\cl G = \bigcap_{n=1}^\infty T_n^*$. Each $x\in \bar G-G$ has a neighborhood system $\{\inter h_n|n\in \zed^+\}$, where $h_n$ is a top-stage 2-handle of $T_n^*$. By Proposition~\ref{fh1}(b), it now suffices to show that inclusion $h_{n+1}-\bar G \subset h_n-\bar G$ is always $\pi_1$-trivial. But any loop $\gamma$ in $h_{n+1}-\bar G$ is compact, so it lies in $h_{n+1}-T_N^*$ for some $N$. This latter space is homeomorphic to $H-h_0$ by construction, so its fundamental group is $\zed$, and $\gamma$ is homotopic in $h_{n+1}-T_N^*$ to a power of the meridian $\mu'$ of the attaching circle of $h_{n+1}$. Similarly, $\pi_1(h_n-T_N^*)\cong \zed$, and in that space $\mu'$ is nullhomologous (cf.\ Figure~\ref{sphere}), hence nullhomotopic. Thus $\gamma$ is nullhomotopic in $h_n-\bar G$ as required.
\end{proof}

\begin{adden}\label{embFHAdd} In the previous theorem, we can also assume both of the following:
\begin{itemize}
\item[(a)] For a preassigned metric on $H$ and $\epsilon>0$, $G$ can be arranged so that for every $n\in\zed^+$, the components of $\bar G-T_n$ lie in disjoint smooth balls of diameter less than $\epsilon/2^n$.
\item[(b)] There is a collared topological 4-ball $B$ lying in a preassigned smooth ball in $H$ such that $G-\inter B$ is a smooth tubular neighborhood of an embedded annulus lying in the first stage core. This can be arranged, preserving a preassigned subtower $T_n$ of $G$ up to smooth isotopy supported in $\inter H$.
\end{itemize}
\end{adden}

Note that (b) is nontrivial since collared topological balls cannot usually be smoothly isotoped into small neighborhoods; many exotic smoothings of $\R^4$ contain counterexamples.

\begin{proof} 
Part (a) is immediate from the proof of the previous theorem. For (b), arrange the top stage of $T_{n+1}^*$ to lie in the smooth ball. Shrink $T_n$ into $\inter H$ by removing a collar of $\partial_-T_1$. Then $T_n$ is a regular neighborhood of a 1-complex that we can smoothly isotope into the ball. Let $B$ be the resulting image of $T_{n+1}^*$, which is still connected to the attaching circle of $H$ by an annulus.
\end{proof}

%%%%%%%%%%%33333333333333333333333333333333333333333%%%%%%%%%%%%%%%

\section{Stein Freedman handlebodies}\label{Iso} 

We are now ready to adapt Freedman handles to Stein theory. After assembling our basic definitions and tools, we will strengthen the theorem from \cite{Ann} that every 2-handlebody interior is homeomorphic to a Stein surface. That theorem was proved by replacing 2-handles with Casson handles; Theorem~\ref{AnnGCH} replaces them with Freedman (or other generalized Casson) handles. This allows us to amplify \cite{JSG}, with Theorem~\ref{iso} showing that topologically embedded 2-handlebodies are {\em ambiently} isotopic to Stein Freedman handlebodies. We will use this in Section~\ref{Onionproof} to create Stein onions. For completeness, we keep track of which generalized Casson handles can arise in these constructions, with a sequence of addenda and remarks that are not central to the paper.

We begin with the tools we will need from Stein theory; see \cite{CE} for more details. Recall that as in \cite{CE}, we suppress the term ``strict" throughout the discussion. Stein surfaces are characterized as being open complex surfaces $(W,J)$ admitting plurisubharmonic Morse functions that are {\em exhausting}, i.e., proper and bounded below. {\em Plurisubharmonic} (equivalently {\em $J$-convex}) essentially means the level sets are {\em pseudoconvex} ({\em $J$-convex}) 3-manifolds away from the critical points. In this dimension, the latter is equivalent to saying that on each of these 3-manifolds $M$, the unique field of complex lines $TM\cap JTM$ is a contact structure. The Morse function then describes $W$ as a sequence of disjoint collars of the form $I\times M$ with each level $\{t\}\times M$ pseudoconvex, and each pair of consecutive collars separated by an elementary cobordism, essentially a layer of disjoint handles. Eliashberg reverses this description to build Stein manifolds. Theorems~8.4 and 8.5 of \cite{CE} give suitable hypotheses for attaching an elementary cobordism to the boundary of a complex $W$, preserving pseudoconvexity. Proposition~10.10 of \cite{CE} allows the resulting critical points to move vertically within the cobordism. Thus, we can interpret the procedure as attaching handles directly to a pseudoconvex boundary $M$, rather than adding an elementary cobordism so that $M$ lies in the resulting interior. (Extend the cobordism below $M$, push the critical points below $M$, then cut off remaining boundary above the level of $M$.) The new boundary then has a collar with pseudoconvex levels that can be assumed to agree with a preassigned pseudoconvex collar of the old boundary away from the new handles. (We always assume handles are attached smoothly so that the new and old boundaries are $C^\infty$-close where they join along the boundary of the attaching region. To get the new collar to match the old one across its entire $I$-parameter, first extend the original collar deeper into $W$ near the attaching regions, then push the critical points below the original collar and extend the latter using the associated plurisubharmonic function.) These theorems from \cite{CE} are stated in the context of compact manifolds, but the proofs are local (affecting only a small neighborhood of the core disks), so they can also be applied to noncompact boundaries with properly embedded collars and infinite collections of handles. This allows us to change our viewpoint to that of \cite{steindiff}, attaching our handles in just three layers, of index 0, 1 and 2, respectively. Since each compact subset of our manifold can only intersect finitely many handles, infinite topology will require infinitely many 0-handles or attaching to a preexisting manifold with noncompact boundary. Following \cite{steindiff}, we have:

\begin{de} For a $4$-manifold $W$ with boundary (not necessarily compact), a {\em 2-handlebody relative to} $W$ is a nested collection of $4$-manifolds $W=\mathcal{H}_{-1}\subset \mathcal{H}_0\subset \mathcal{H}_1\subset \mathcal{H}_2=\mathcal{H}$, where for each $k\ge0$, $\mathcal{H}_k$ is made from $\mathcal{H}_{k-1}$ by attaching (possibly infinitely many) $k$-handles with disjoint attaching regions. A {\em 2-handlebody} is the special case when $W$ is empty. A {\em 2-handlebody pair (relative to $W$)} is a pair $(\mathcal{H},\mathcal{H}^-)$  where $\mathcal{H}$ is a handlebody (relative to $W$) and $\mathcal{H}^-\subset\mathcal{H}$ is a {\em subhandlebody (relative to $W$)}, i.e., a subset that inherits a handlebody structure (relative to $W$). 
\end{de}

\noindent Since there are no 3-chains, $\rm H_2(\mathcal{H},\mathcal{H}^-)$ is free abelian, and when $W$ is empty, $\rm H_2(\mathcal{H}^-)$ inside $\rm H_2(\mathcal{H})$ is a direct summand of a free abelian group. We will need $W$ nonempty in Section~\ref{Onionproof}.

\begin{de}\cite{steindiff}. \label{SHB} A complex surface with boundary, exhibited as a 2-handlebody $\mathcal{H}$ (relative to a submanifold $W$), will be called a {\em Stein handlebody (relative to $W$)} if 
\item{a)} it admits a proper holomorphic embedding into a Stein surface,
 \item{b)} each $\partial \mathcal{H}_k$ ($k\ge -1$) is (strictly) pseudoconvex, and
 \item{c)} the attaching region of each 2-handle, which is a solid torus in the contact 3-manifold $\partial \mathcal{H}_1$, has convex boundary (in the contact sense) with  two dividing curves, each a longitude that when modified by a left twist determines the 2-handle framing.
\end{de}

\noindent The first two conditions guarantee \cite{steindiff} that every relative Stein handlebody interior is a Stein surface, and every compact relative Stein handlebody is a Stein domain (sublevel set of an exhausting plurisubharmonic function). The set of (relative) Stein handlebodies is closed under attaching handles by Eliashberg's method (suitably defined) \cite[Proposition~7.7]{steindiff}. Additional handles attached to $\mathcal{H}$ can be assumed to be attached by this method, provided that the new 2-handles have sufficiently negative framings. Condition (c), which we will not use in detail, expresses the essence of a 2-handle attached in this manner (determining a Legendrian attaching circle for a totally real core \cite[Proposition~7.6]{steindiff}). Note that each subhandlebody of a Stein handlebody is again a Stein handlebody.

We find embedded Stein handlebodies using the following theorem, essentially \cite[Theorem~7.9]{steindiff} together with the second sentence of its proof.

\begin{thm}\cite{steindiff}.\label{main}
Let $\mathcal{H}$ be a Stein handlebody relative to $W$, and let $f\co \mathcal{H}\to X$ be a smooth, proper embedding into a complex surface. Suppose that the complex structure on $X$ pulls back to one on $\mathcal{H}$ that is homotopic rel $W$ (through almost-complex structures) to the original complex structure. Then $f$ is smoothly ambiently isotopic to $\hat f$, with $\hat f(\mathcal{H})$ exhibited as a Stein handlebody relative to $\hat f(W)$ (for the complex structure inherited from $X$ and the handle structure inherited from $\mathcal{H}$). Each map in the isotopy sends each subhandlebody $\mathcal{H}'$ of $\mathcal{H}$ into $f(\mathcal{H}')$. The isotopy is supported in a preassigned neighborhood of $\cl(\mathcal{H}-W)$, and on $W$ it is $C^r$-small (for any preassigned $r\ge 2$).
\end{thm}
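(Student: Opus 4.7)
The plan is to proceed inductively along the handle decomposition of $\mathcal{H}$, using Eliashberg's $h$-principle for Stein manifolds at each stage. Set $f_{-1}=f|_W$ and inductively construct embeddings $f_k\co\mathcal{H}_k\hookrightarrow X$ together with smooth ambient isotopies of $X$, each supported in a small preassigned neighborhood of $\cl(\mathcal{H}_k-\mathcal{H}_{k-1})$, such that $f_k(\mathcal{H}_k)$ is realized as a Stein handlebody relative to $f(W)$ up to index $k$ in the complex structure inherited from $X$, and such that $f_k|_{\mathcal{H}_{k-1}}$ agrees with the previous $f_{k-1}$ after the ambient isotopy. The desired $\hat f$ is then $f_2$, and the concatenation of the isotopies is the required one. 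Throughout, the driving principle is that the homotopy hypothesis on almost-complex structures provides the algebraic-topological room needed to perform these geometric straightenings.

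The $0$- and $1$-handle stages are essentially routine applications of Eliashberg's handle-attaching theorems \cite[Theorems~8.4--8.5]{CE}. The relative homotopy of almost-complex structures restricts, on each $0$-handle, to a homotopy between the inherited structure and one under which the handle is standardly Stein in $X$; a Gromov-style $h$-principle argument then produces the local ambient isotopy making the boundary pseudoconvex. For the $1$-handles, attaching regions can be similarly straightened and then the handles attached via Eliashberg's method, maintaining pseudoconvexity of the boundary.

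The crux is the $2$-handle stage, where the sharpness of condition (c) of Definition~\ref{SHB} comes into play. At this point, $f_1(\mathcal{H}_1)$ has pseudoconvex boundary carrying a contact structure $\xi$, and each $2$-handle's attaching circle pushes forward to a framed knot in this contact $3$-manifold. The framing dictated by the Stein handlebody structure is precisely one left-twist less than the contact framing of a Legendrian representative, matching exactly the framing required by Eliashberg's $2$-handle attachment. The hard step is therefore to produce, for each attaching circle, a $C^0$-small isotopy inside $\partial f_1(\mathcal{H}_1)$ to a Legendrian representative with the correct Thurston--Bennequin invariant (and compatible rotation number so that the core extends to a totally real disk), without introducing extra stabilizations that would shift the framing. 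The relative almost-complex homotopy is what makes this possible: it guarantees that the obstruction-theoretic invariants of the original core disks agree with those of totally real disks bounded by Legendrian knots with the prescribed $tb$. Once the Legendrian representative is in place, Eliashberg's $2$-handle attachment extends $f_1$ holomorphically across the handle, giving $f_2$.

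Finally, each inductive step is supported in a small neighborhood of the newly attached handle's core and attaching region, so at every stage the ambient isotopy sends each preassigned handle into its own image. This immediately guarantees that every map in the composite isotopy sends each subhandlebody $\mathcal{H}'\subset\mathcal{H}$ into $f(\mathcal{H}')$. The $C^r$-smallness of the isotopy on $W$ is built into the locality and smoothness of Eliashberg's constructions near handle attaching regions, since no modification is needed on the interior of $W$ itself.
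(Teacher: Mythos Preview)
The paper does not give its own proof of this theorem: it is quoted directly from \cite{steindiff} (as \cite[Theorem~7.9]{steindiff} together with the second sentence of its proof) and used as a black box throughout. So there is no ``paper's proof'' to compare against here.

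Your outline is a reasonable sketch of how the argument in \cite{steindiff} actually goes: induction on handle index, with the $0$- and $1$-handle stages handled by Eliashberg's subcritical attachment, and the $2$-handle stage using condition~(c) of Definition~\ref{SHB} to match the framing to $tb-1$. Two places where your sketch is thinner than a proof would need to be: first, after you rebuild $\mathcal{H}_1$ inside $X$ by Eliashberg's method, the contact structure on $\partial f_1(\mathcal{H}_1)$ is the one induced by $X$, not the one from the original Stein handlebody, and you need to argue that the contact solid tori of the $2$-handle attaching regions are carried to contact solid tori of the same type (this is where condition~(c), stated in terms of convex boundary and dividing curves rather than a specific Legendrian, does its work---it is a contact-isotopy-invariant condition); second, the rotation number matching is governed by the relative Chern class, and you should say explicitly that the homotopy of almost-complex structures rel $W$ identifies these so that no extra zig-zags are forced. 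Both points are handled carefully in \cite{steindiff}; your sketch gestures at them but does not pin them down.
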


\begin{de}\label{gchbody} A {\em generalized Casson handlebody} $\G$ modeled on a 2-handlebody pair $(\mathcal{H},\mathcal{H}^-)$ (or on $\mathcal{H}$ if $\mathcal{H}^-$ is empty) is obtained by replacing 2-handles outside of $\mathcal{H}^-$ with generalized Casson handles using the same attaching maps. If these replacements are all Freedman handles, we will call $\G$ a {\em Freedman handlebody}. The {\em subtowers} of $\G$ are the subsets obtained by cutting back each generalized Casson handle to a finite initial subtower. The {\em handle-compactification} $\bar\G$ of $\G$ is obtained by endpoint compactifying each generalized Casson handle separately (so is not compact if  $\mathcal{H}$ is infinite). We will use analogous terminology for relative handlebodies. Any such $\G$, with a given complex structure, will be called {\em Stein} if it admits a (relative) Stein handle decomposition extending the decomposition on $\mathcal{H}_1\cup\mathcal{H}^-$, such that each subtower of $\mathcal{G}$ inherits a subhandlebody structure. Such a decomposition will be called a {\em Stein handlebody realization} of $\G$ (or of each of its subtowers).
\end{de}

The key step for adding Stein handles by Eliashberg's method is to make the cores {\em totally real}, i.e., lacking complex points. For a compact surface $F$ smoothly immersed in a complex surface $(X,J)$, a point $x \in F$ is a {\em complex point\/} if the tangent space $T_x F$ maps to a complex line in $TX$ (without regard to orientation). Generically, $F$ has only finitely many complex points, all in its interior. Suitably counting these yields a pair of obstructions, which can be reinterpreted as characteristic classes $\rel \partial$. If there are no complex points on $\partial F$, then a nowhere zero tangent vector field $\tau$ to $\partial F$ and outward normal vector field $n$ tangent to $F$ form a complex basis for $TX$ along $\partial F$. If $F$ is oriented, we obtain a characteristic number $c(F)= \langle c_1(X,\tau,n),[F] \rangle$ measuring the first Chern class of $TX|F$ relative to the boundary framing $(\tau,n)$. The Euler number of $TF$ relative to $\tau$ is just the Euler characteristic $\chi(F)$. However, $J\tau$ is never tangent to $F$, so it determines a relative normal Euler number $e(\nu) = \langle e(\nu F, J\tau),[F] \rangle$.

\begin{thm} \label{real} \cite{EH}.
Let $F$ be a compact, connected, oriented surface immersed in a complex surface $X$. Suppose the immersion is generic, i.e., its only singularities are transverse double points in $\inter F$, with no complex points on $\partial F$. Then there is a smooth ambient isotopy of $X$ fixing a neighborhood of $\partial F$, making $F$ totally real, if and only if $c(F)=e(\nu)+\chi (F)=0$. If so, the isotopy can be taken to be $C^0$-small, and supported near a preassigned connected subset of $F$ containing the complex points.
\end{thm}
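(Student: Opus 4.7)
The plan is to arrange $F$ to be complex-generic, express $c(F)$ and $e(\nu)+\chi(F)$ as signed counts of complex points, and then cancel the complex points in pairs via $C^0$-small ambient isotopies. The necessity direction will drop out immediately.

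After a preliminary $C^\infty$-small ambient isotopy, supported away from $\partial F$ (which starts with no complex points), I may assume the complex points of $F$ are finitely many, all in $\inter F$, all disjoint from the double points, and each of ``good'' Bishop type: elliptic or hyperbolic in the standard local holomorphic normal form. Each complex point $x$ carries an additional sign $\pm$ obtained by comparing the orientation of $T_xF$ with the complex orientation of its tangent complex line; let $e_\pm,h_\pm$ denote the four resulting counts. A direct obstruction-theoretic computation, done in the local holomorphic model at each complex point, expresses $c(F)$ and $e(\nu)+\chi(F)$ as linear combinations of $e_\pm$ and $h_\pm$ by an invertible integer matrix, so that the joint vanishing of $c(F)$ and $e(\nu)+\chi(F)$ is equivalent to $e_+=h_+$ and $e_-=h_-$. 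This gives the necessity direction at once: a totally real surface has no complex points, so all four counts and hence both invariants vanish.

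For sufficiency, I pair each elliptic complex point with a hyperbolic point of the same sign, and join the two by an embedded arc in $F$ that avoids all other complex points and all double points; such an arc exists because $F$ is connected and the singular sets are finite, and I may subdivide it so that each sub-arc lies in a small Darboux-type chart of $X$ in which $F$ is $C^0$-close to a standard model. Iterating a single-pair cancellation along these arcs produces the desired isotopy. Because each step is $C^0$-small and supported in a preassigned neighborhood of its arc, the composite is automatically $C^0$-small, supported in an arbitrarily chosen neighborhood of a preassigned connected subset containing the complex points, preserves the transverse double points, and fixes a collar of $\partial F$.

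The hard part, and the technical heart of Eliashberg--Harlamov \cite{EH}, is the single-pair cancellation: given matched elliptic and hyperbolic complex points of the same sign joined by a short arc, I must produce a $C^0$-small ambient isotopy of $X$ whose effect on $F$ is to replace a tubular neighborhood of the arc with a totally real local piece, leaving $F$ unchanged outside. The model is the reversed birth/death bifurcation of a Bishop family in $\C^2$: a generic one-parameter family of real surfaces passing through a single degenerate complex point at parameter $0$ and totally real for nearby nonzero parameters, realized by a compactly supported, $C^0$-small family of diffeomorphisms of $\C^2$. Once this local cancellation lemma is in hand, induction on the number of matched pairs completes the proof.
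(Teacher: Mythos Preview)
Your proposal is correct and follows essentially the same approach as the paper, which does not give a detailed proof but simply cites \cite{EH} (canceling complex points in pairs) and remarks that the immersed case reduces to the embedded one after rotating near the double points to clear any complex points there. Your handling of the double points via a preliminary genericity perturbation accomplishes the same thing, and your pairing-and-cancellation argument is exactly the Eliashberg--Harlamov mechanism the paper invokes.
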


\noindent Eliashberg and Harlamov prove this for embeddings, by canceling complex points in pairs. The immersed case is no harder, once we rotate the singular points of the immersion to remove any complex points there. For a proof in English see \cite{N}, or ~\cite{Fo1} for a more general version. Note that the invariants $c(F)$ and $e(\nu)+\chi (F)$ are congruent mod 2, since they both reduce mod 2 to the second Stiefel--Whitney number of $TX|F$ relative to the real framing on $TX|\partial F$ induced by $\tau$, $n$ and $J$.

We can now convert abstract 2-handlebodies to Stein generalized Casson handlebodies.

\begin{thm}\label{AnnGCH} Let $\mathcal{H}$ be a 2-handlebody relative to a (possibly empty) $W$. Suppose that $W$ has a proper embedding in a Stein surface with $\partial W$ (strictly) pseudoconvex. The resulting complex structure on $W$ extends to an almost-complex structure on $\mathcal{H}$; let $J$ be any such extension. Then every relative generalized Casson handlebody $\G$ modeled on $\mathcal{H}$ for which each core surface has sufficiently large $g+k_+-k_-$ is a Stein generalized Casson handlebody with respect to some complex structure homotopic to $J$ rel $W$. In particular, every generalized Casson handlebody can be made Stein after suitable refinement. 
\end{thm}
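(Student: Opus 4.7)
The plan is to adapt the strategy of \cite{Ann} (which handled the Casson handle case) by building the Stein structure on $\G$ inductively via Eliashberg's handle-attachment method, starting from the Stein $W$ and proceeding outward. First I would attach the 0- and 1-handles of $\mathcal{H}$---these attach with no obstruction, and $J$ may be homotoped freely across them---followed by any 2-handles lying inside $\mathcal{H}^-$ using Theorem~\ref{main} after Legendrian realization of their attaching circles. For each remaining 2-handle of $\mathcal{H}$, replaced in $\G$ by a generalized Casson handle $G$, I would attach the 1-stage towers of $G$ one at a time. Each such tower is a smooth regular neighborhood of a compact immersed surface $F$ with $k_\pm$ double points and genus $g$, and I regard it as a ``2-handle with immersed core $F$'' to be attached by Eliashberg's method (\cite[Theorems 8.4, 8.5]{CE}). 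This requires the attaching circle to be Legendrian with framing one below Thurston--Bennequin, and $F$ to be totally real after a $C^0$-small ambient isotopy.

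The hypothesis on $g+k_+-k_-$ is precisely what lets us arrange $F$ to be totally real. By Theorem~\ref{real}, this reduces to the vanishing of the two characteristic numbers $c(F)$ and $e(\nu)+\chi(F)$. Since $\chi(F)=1-2g$ (as $\partial F$ is a circle), and the relative normal Euler number differs from the canonical value $-2(k_+-k_-)$ by a fixed integer (the difference between the Eliashberg framing and the canonical framing on $\partial F$), $e(\nu)+\chi(F)$ is an affine function of $g$ and $k_+-k_-$. Taking $g+k_+-k_-$ sufficiently large---and using the freedom to add cancelling pairs of double points to shift $k_\pm$ without changing $k_+-k_-$, and to add torus summands to raise $g$---lets us drive $e(\nu)+\chi(F)$ to zero. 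The Chern-class condition $c(F)=0$, which is congruent mod 2 to the Euler condition, is then arranged by homotoping $J$ on the interior of the handle being attached; this homotopy is invisible on $W$ and on previously attached handles where $J$ is already fixed by the Stein structure, which matches the ``complex structure homotopic to $J$ rel $W$'' conclusion of the theorem.

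The main obstacle is the infinite nature of $\G$: each generalized Casson handle has infinitely many stages, and $\G$ may contain infinitely many such handles. This is handled by the observation recorded before Definition~\ref{SHB} that Eliashberg's attachment theorems are local in character and apply to noncompact pseudoconvex boundaries with properly embedded collars and infinite collections of handles. Thus each finite subtower of $\G$ inherits a Stein handlebody realization extending those of its predecessors, and the nested union is a Stein handlebody realization of $\G$ in the sense of Definition~\ref{gchbody}. The final sentence of the theorem follows because any preassigned generalized Casson handlebody can be refined---by adding genus, double points, and further generalized Casson stages---to satisfy the $g+k_+-k_-$ hypothesis on every core surface, so the resulting refinement is Stein by the main assertion.
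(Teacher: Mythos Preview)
Your overall architecture---extend the Stein structure from $W$ over the 0- and 1-handles, then attach each 1-stage tower by making its core $F$ totally real and thickening via Eliashberg, and iterate over stages---matches the paper. The gap is in how you kill the Euler obstruction $e(\nu)+\chi(F)$.

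You treat the integer relating $e(\nu)$ to $-2(k_+-k_-)$ as ``fixed'', but it is $-t(\partial F)$, the twisting of the contact framing against the canonical framing, and it depends on the Legendrian realization of $\partial F$. With $t(\partial F)$ genuinely fixed, $e(\nu)+\chi(F)=1-t(\partial F)-2(g+k_+-k_-)$ vanishes for exactly one value of $g+k_+-k_-$, not for all sufficiently large ones; enlarging $g+k_+-k_-$ drives it negative, not to zero. Your proposed remedies---adding torus summands or cancelling $\pm$ pairs of double points---are both disallowed (the surface $F$ is part of the given $\G$, not yours to modify) and ineffective (torus summands make the obstruction more negative; cancelling pairs leave $k_+-k_-$ and hence the obstruction unchanged). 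The mechanism you are missing is Legendrian stabilization: adding zig-zags to $\partial F$ lowers $t(\partial F)$ by any desired amount, and pairs of opposite zig-zags lower it by even amounts without disturbing $c(F)$. The paper first uses zig-zags to set $c(F)=0$, then observes that the remaining obstruction is even and, once $g+k_+-k_-$ is large enough, nonpositive; further even stabilization of $\partial F$ then raises it to zero. This is the actual content of ``sufficiently large'': it guarantees the obstruction lands on the side from which stabilization can reach zero. Your alternative route to $c(F)=0$ by homotoping $J$ over the handle interior is legitimate for the first stage, but it does nothing for $e(\nu)+\chi(F)$, since that quantity is determined by $J$ on $\partial W$, which is fixed.
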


For potential application to explicit constructions, we digress to specify suitable values of $g+k_+-k_-$ when $\mathcal{H}$ is a finite 2-handlebody. In this case, we can exhibit $\mathcal{H}$ by a Legendrian link diagram as in \cite{Ann}, \cite{GS}, although the framing coefficient $f(h)$ of each 2-handle $h$ typically will not be related to the invariants $tb(K)$ and $r(K)$ of its Legendrian attaching circle. Let $c(h,J)$ be the first Chern class of $J$ on $h$, relative to the standard complex tangent trivialization of $(\mathcal{H}_1,J)$ given by the diagram. 

\begin{adden}\label{AnnGCHAdd} For finite $\mathcal{H}$ as above, we can extend the Stein structure over the first stage of a corresponding $\G$ as long as each first stage core satisfies $g+k_+-k_-\ge\frac12 (f(h)+1-tb(K)+|c(h,J)-r(K)|)$. The right-hand side of the inequality is an integer. If the trivializing links of $\G$ are all standard, then the Stein structure extends over all of $\G$ if, in addition, each higher stage has $g+k_+-k_->0$. In particular, these surfaces can be embedded punctured tori, or disks with a single positive double point.
\end{adden}

\begin{proof}[Proof of Theorem~\ref{AnnGCH}]
The extension $J$ exists, and is unique over $\mathcal{H}_1$, since $\mathcal{H}$ has the homotopy of a relative 2-complex and $\SO(4)/{\rm U}(2)\approx S^2$ is 1-connected. Since Eliashberg handle attachment preserves the Stein handlebody condition, it suffices to assume $W=\mathcal{H}_1$ and focus on the 2-handles. We first show that $J$ is homotopic rel $W$ to a complex structure on $\mathcal{H}$. To measure $J$, give $W$ a handle structure so that $\mathcal{H}$ becomes an absolute handlebody, then fix  a complex trivialization of the tangent bundle over the union of 0- and 1-handles. The relative first Chern classes of the 2-handles of $\mathcal{H}$ then determine the homotopy class of $J$. If $\mathcal{H}'$ is a 2-handlebody rel $W$ made from $\mathcal{H}$ by removing its 2-handles (fixing $W$) and replacing them after changing their framings by even numbers of twists, then $\mathcal{H}'$ can be given an almost-complex structure $J'$ with the same relative Chern numbers as $J$ (which can be any integer lifts of the mod 2 class $w_2$). If the framings are sufficiently negative, we can assume $(\mathcal{H}',J')$ is a Stein handlebody rel $W$. We then obtain a handle-preserving immersion $\mathcal{H}\to\mathcal{H}'$ by adding (negative) double points to the 2-handle cores in $\mathcal{H}'$, each double point adding two back to the corresponding normal Euler number (by the displayed formula preceding Definition~\ref{gct} and subsequent text). Pulling back by this immersion gives the required complex structure on $\mathcal{H}$, which we identify with $J$ after homotopy of the latter. (The relative Chern classes are preserved by naturality since the reference trivialization is fixed.)

Now let $F$ be a core surface of the first stage of $\G$, canonically immersed (as preceding Definition~\ref{gch}) in the corresponding 2-handle $h\subset\mathcal{H}$. We wish to make $F$ totally real. After an isotopy of $(F,\partial F)$ in $(h,\partial_-h)$, we can assume the circle $\partial F$ is {\em Legendrian} in $\partial W$, i.e., $\tau$ lies in the contact plane field. Then $J\tau$ also lies in this plane field, so it represents the {\em contact framing} of $\partial F$ in $\partial W$. Let $t(\partial F)$ be the number of twists of this framing relative to the framing canonically induced by $h$. For suitable conventions on the contact solid torus $\partial_-h$, the twisting $t(\partial F)$ is the {\em Thurston--Bennequin invariant} of the Legendrian knot $\partial F$, and the complex framing $(\tau,n)$ is measured by the {\em rotation number} of $\partial F$. (The proof of Addendum~\ref{AnnGCHAdd} below compares these conventions against standard ones.) It is well known that the rotation number can be changed arbitrarily by locally adding zig-zags to a front projection of $\partial F$, so we can set $c(F)=0$ by an isotopy of $(F,\partial F)$ in $(h,\partial_-h)$. (See e.g.\ \cite[Chapter~11]{GS}.) This operation decreases $t(\partial F)$, and we can further decrease it by any even number without changing $c(F)$, using pairs of zig-zags in opposite directions. However, we cannot always increase $t(\partial F)$, which is the ultimate reason for failure of Eliashberg's method to produce Stein structures on arbitrary 2-handlebodies. Since the normal Euler number $e(\nu)$ of $F$ is measured relative to the contact framing $J\tau$, the normal Euler number of $F$ relative to the framing canonically induced by $h$ is $e(\nu)+t(\partial F)$. This is given by $-2(k_+ - k_-)$, by the text below the formula preceding Definition~\ref{gct}. Now the other obstruction from Theorem~\ref{real} is given by $e(\nu)+\chi(F)= -t(\partial F)-2(k_+ -k_-) +1-2g=1-t(\partial F)-2(g+k_+-k_-)$. If $F$ is the core of $h$, the last term vanishes to give the well-known obstruction to making 2-handlebodies Stein: the 2-handle framing should be one less than the contact framing. However, for any sufficiently large $g+k_+-k_-$, the obstruction will be nonpositive and even (since its mod 2 residue agrees with that of $c(F)=0$). Thus, we can set it to zero by decreasing $t(\partial F)$ by an isotopy, then arrange $F$ to be totally real by Theorem~\ref{real}.

To obtain a Stein handlebody realization of the first stage of $\G$, we make each first stage core totally real, then subdivide it as a relative CW-complex. The complex can be thickened to produce a Stein handlebody rel $W$ by Eliashberg's method. For example, \cite[Theorem~8.4]{CE} allows us to successively thicken the handles to be pseudoconvex without disturbing the totally real condition. The Chern classes relative to the fixed reference trivialization are again preserved.
We complete the proof by induction on the number of stages in the subtower under consideration. We inductively assume a given subtower of $\G$ has a Stein handlebody realization. After isotopy, each trivializing link lies in its union of 0- and 1-handles. Applying the previous construction with $W$ replaced by this subtower extends the realization over the next stage. The infinite union $\G$ is then a Stein handlebody of the required form, since it is made by attaching Eliashberg handles to $W$.
\end{proof} 

\begin{proof}[Proof of Addendum~\ref{AnnGCHAdd}]
We adopt the conventions for a standard Legendrian diagram of a Stein domain \cite{Ann}, \cite[Chapter~11]{GS} and follow the previous proof using the standard trivialization on $T\mathcal{H}_1$. Initially, $\partial F$ is the Legendrian attaching circle $K$ of $h$. The twisting $t(\partial F)$ measures the contact framing with respect to the framing induced by the handle $h$, which has coefficient $f(h)$ in the diagram. The Thurston--Bennequin invariant directly gives the coefficient of the contact framing, so $tb(\partial F)=t(\partial F)+f(h)$. Similarly, the rotation number $r(\partial F)$ measures $(\tau,n)$ relative to the standard trivialization, so $c(h,J)=c(F)+r(\partial F)$. But $c(h,J)$ is preserved by our isotopy of $F$ setting $c(F)=0$, which then resets the rotation number $r(\partial F)$ from $r(K)$ to $c(h,J)$, lowering $tb(\partial F)$ by $|c(h,J)-r(K)|$. The first two sentences of the addendum now follow immediately from the formula for the obstruction $e(\nu)+\chi(F)$ in the above proof. (Recall that the obstruction is now even since $c(F)=0$.) To build the higher stages, we can assume the previous stages were built using the standard diagrams \cite{Ann} for increasing $g$, $k_+$ or $k_-$ by one, Figure~\ref{models}(a,b,c) respectively, where the 0-framed curves represent the corresponding trivializing links. Since the latter have $tb=r=0$, we can extend the Stein tower as long as subsequent surfaces have $g+k_+-k_->0$. (We are free to set $c(h,J)=1$ since higher stages do not introduce 2-homology or affect the homotopy class of $J$.) For a different perspective on the most important cases, extending at a torus summand or positive double point, note that a pseudoconvex product neighborhood of a totally real torus is bounded by the standard contact 3-torus, and the standard trivializing link is a pair of framed circles determined by the product structure. These can be taken to be Legendrian with the contact framing, so we can extend as required. If we instead cap one circle by a Stein 2-handle with framing $-1$, we obtain a model for a positive double point and its trivializing circle, and we can again extend as required.
\end{proof}

\begin{figure}
\labellist
\small\hair 2pt
\pinlabel $0$ at 61 16
\pinlabel $0$ at 60 120
\pinlabel $0$ at 257 47
\pinlabel $0$ at 257 120
\pinlabel ${\rm a)}$ at 0 119
\pinlabel ${\rm b)}$ at 212 119
\pinlabel ${\rm c)}$ at 212 47
\endlabellist
\centering
\includegraphics{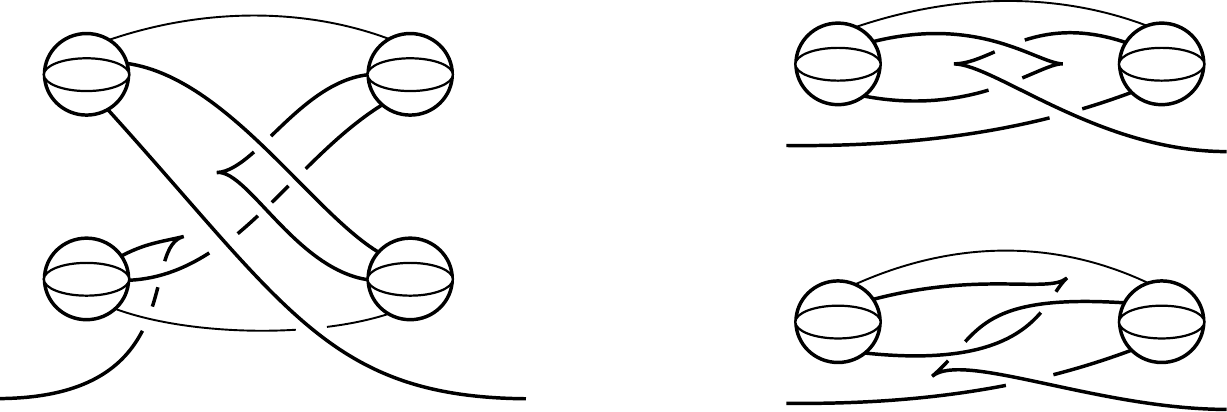}
\caption{Stein standard models for trivializing links.}
\label{models}
\end{figure}

The main theorem of this section generalizes Theorem~\ref{iso1}, ambiently isotoping embedded 2-handlebodies to have Stein interiors.

\begin{thm} \label{iso} Let $\mathcal{H}$ be a 2-handlebody relative to a (possibly empty) $W$, with $W$ properly embedding in a Stein surface so that $\partial W$ is (strictly) pseudoconvex. Given a topological embedding $\mathcal{H}\hookrightarrow X$ into a complex surface, with proper, holomorphic restriction to $W$, suppose $\mathcal{H}$ is collared in $X$ and $\cl\mathcal{H}-\mathcal{H}$ is totally disconnected. Then $\mathcal{H}$ is topologically ambiently isotopic to the handle-compactification $\bar\G$ of a Stein, relative, standard Freedman handlebody modeled on $(\mathcal{H},W)$ (with the embedding respecting the handle structure of $\mathcal{H}$ and complex structure of $X$). The same is true without the Stein conclusion, for $W\subset X$ any smooth, proper 4-manifold pair. In each case, the isotopy sends each subhandlebody of $\mathcal{H}$ into itself, is supported in a preassigned neighborhood of $\cl(\mathcal{H}-W)$, and is $C^r$-small on $W$ (for any preassigned $r$).
\end{thm}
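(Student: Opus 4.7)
My plan is to build the Freedman handlebody $\mathcal{G}$ one 2-handle at a time using Theorem~\ref{embFH}, then put a Stein structure on it abstractly via Theorem~\ref{AnnGCH}, and finally apply Theorem~\ref{main} to smoothly isotope the embedding until it becomes Stein in the induced complex structure. I would first isotope $\mathcal{H}$ (nonambiently, along its collar) so that $\mathcal{H}_1=\mathcal{H}_1\cup W$ is smoothly embedded as a closed subset of $X$. Each topological 2-handle of $\mathcal{H}$ attached to $\mathcal{H}_1$ is then a closed 2-handle homeomorph in $X$, smooth on its interior, to which Theorem~\ref{embFH} applies. I would use it to replace each such 2-handle by a smoothly embedded, convergent, standard Freedman handle, choosing the cores to have $g+k_+-k_-$ large enough to meet the bound of Theorem~\ref{AnnGCH} (and Addendum~\ref{AnnGCHAdd}), and choosing the embeddings so that each handle-compactification is topologically ambiently isotopic in $X$ to the original 2-handle $\rel\partial$. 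Call the resulting smoothly embedded relative Freedman handlebody $\mathcal{G}$; concatenating the isotopies in disjoint supports gives a topological ambient isotopy of $\mathcal{H}$ onto $\bar\mathcal{G}$ that is the identity on $W$ and sends each subhandlebody into itself.

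Next, I would apply Theorem~\ref{AnnGCH} to $\mathcal{G}$, viewed abstractly as a generalized Casson handlebody modeled on the 2-handlebody pair $(\mathcal{H},W)$. Since the embedding is holomorphic on $W$ and $W$ inherits a Stein structure with pseudoconvex boundary by hypothesis, the theorem endows $\mathcal{G}$ with a Stein handlebody realization for some almost-complex structure $J'$ on $\mathcal{G}$ that is homotopic $\rel W$ to the almost-complex structure $J$ pulled back from $X$ (this homotopy class is unique over $\mathcal{H}_1$, and our choice of $g+k_+-k_-$ was made so that the relative Chern numbers required by Theorem~\ref{AnnGCH} agree with those of $J$). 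Theorem~\ref{main} then provides a smooth ambient isotopy $\rel W$, supported in a preassigned neighborhood of $\cl(\mathcal{H}-W)$ and $C^r$-small on $W$, carrying the smooth embedding of $\mathcal{G}$ onto an embedding whose image is a Stein handlebody relative to $W$ in the complex structure inherited from $X$, sending each subhandlebody into itself. Composing with the topological isotopy of the previous paragraph produces the desired topological ambient isotopy of $\mathcal{H}$ to $\bar\mathcal{G}$, Stein in $X$.

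The main technical obstacle is the infinite case, where $\mathcal{H}$ may have infinitely many handles clustering at the totally disconnected set $\cl\mathcal{H}-\mathcal{H}\subset X$. Two things must be arranged simultaneously: (i) the Freedman handles must be chosen sufficiently small so that their handle-compactifications produce the same cluster set and the concatenated isotopy is uniformly continuous as required for a topological ambient isotopy, and (ii) Theorem~\ref{main} must be applied globally to the infinite Stein handlebody $\mathcal{G}$ while keeping the support of the smooth isotopy away from $\cl\mathcal{H}-\mathcal{H}$. For (i) I would use Addendum~\ref{embFHAdd}(a) to force the Freedman handle replacing the $k^{th}$ handle (in some enumeration respecting the clustering) to lie within distance $2^{-k}$ of the original, controlling it with respect to a metric on $X$; the totally disconnected hypothesis then guarantees that the compactification sets match up and the isotopy extends continuously. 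For (ii), Theorem~\ref{main} is stated for proper smooth embeddings with the isotopy supported in a preassigned neighborhood of $\cl(\mathcal{H}-W)$, so choosing the neighborhood to avoid $\cl\mathcal{H}-\mathcal{H}$ in $X$ and using local support control for each handle gives a well-defined global smooth ambient isotopy. The non-Stein statement requires only the first paragraph, replacing Theorem~\ref{main} and Theorem~\ref{AnnGCH} by the observation that any smooth proper pair $W\subset X$ admits the same local construction without needing a complex structure.
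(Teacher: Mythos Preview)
Your overall architecture is the same as the paper's: straighten the low-index handles, replace each topological 2-handle by a Freedman handle via Theorem~\ref{embFH}, apply Theorem~\ref{AnnGCH} abstractly, then Theorem~\ref{main}. However, there is a real gap in your first paragraph. You write that you would ``isotope $\mathcal{H}$ (nonambiently, along its collar) so that $\mathcal{H}_1$ is smoothly embedded.'' Pushing $\mathcal{H}$ inward along its collar does not make a topological embedding smooth; the 0- and 1-handles of $\mathcal{H}$ outside $W$ are only topologically embedded by hypothesis, and Theorem~\ref{embFH} requires a \emph{smooth} attaching region $\partial_-H$. The paper closes this gap with Quinn's Handle Straightening Theorem \cite[2.2.2]{Q}: the embedding is first smoothed near the center of each 0-handle, the 0-handles are then shrunk by an ambient topological isotopy to lie in the smooth region, and the same argument handles the 1-handles (giving a smooth neighborhood of core union attaching region). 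You need this step, or something equivalent, before you can invoke Theorem~\ref{embFH}.

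A second, smaller issue: to get a global \emph{ambient} isotopy of $X$ you must explain why the isotopy extends continuously across the cluster set $\cl\mathcal{H}-\mathcal{H}$. The paper does this by first enlarging $\mathcal{H}$ to a slightly bigger $\mathcal{H}'$ using the collar, building the entire isotopy inside $\mathcal{H}'$ rel $\partial\mathcal{H}'$, extending by the identity over the open set $X-\cl\mathcal{H}$, and then observing that total disconnectedness of $\cl\mathcal{H}-\mathcal{H}$ makes the extension by the identity continuous at those points (they are locally endpoint compactifications). Your paragraph~(i) gestures at this but mixes it with diameter control on the Freedman handles; the actual argument needs the $\mathcal{H}'$-enlargement so that $\partial\mathcal{H}'$ is a closed subset of the open manifold $X^0=X-(\cl\mathcal{H}-\mathcal{H})$. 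Similarly, for~(ii), Theorem~\ref{main} is applied not in $X$ but in $X-(\cl\bar\G-\G)$, where $\G$ is properly embedded; you should say this explicitly.
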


The nonholomorphic case was essentially known to Freedman and Quinn in the 1980s.

\begin{Remarks}\label{isoRem} (a) The condition that $\cl\mathcal{H}-\mathcal{H}$ is totally disconnected can be replaced by the weaker condition that for every $\epsilon >0$, the handles of diameter $\ge \epsilon$ comprise a closed subset of $X$. (This is independent of choice of metric.) As a simple example, start with a 2-handlebody structure on $D^2\times\R^2\subset\R^2\times\R^2=\R^4$ and radially shrink $\R^4$ onto $\inter D^4$. Then $\cl\mathcal{H}-\mathcal{H}$ is a circle, but the proof still applies.

\item{(b)} The set of Freedman handlebodies $\G$ that can appear in the theorem for a fixed $\mathcal{H}\subset X$ is determined as in Theorem~\ref{embFH} on each 2-handle, subject to lower bounds on $g+k_+-k_-$ in the complex case, as in Addendum~\ref{AnnGCHAdd} if applicable. In particular, if the embedding is smooth on some finite (nonrelative) subhandlebody $\mathcal{H}^-$, the corresponding Freedman subhandlebody of $\G$ can be chosen arbitrarily within the constraint of  Addendum~\ref{AnnGCHAdd}. If the isotopy is of $\inter\mathcal{H}$ and not required to be ambient, then the condition on $\cl\mathcal{H}-\mathcal{H}$ is unnecessary, and we can extend to generalized Casson handlebodies as in Remark~\ref{embFHRem} (with the corresponding constraints on $g+k_+-k_-$).
\end{Remarks}

\begin{proof}[Proof of Theorem~\ref{iso}]
Since $\mathcal{H}$ is collared, it can be enlarged within any preassigned neighborhood in $X$ to a slightly larger handlebody $\mathcal{H}'$ relative to $W$ with $\cl\mathcal{H}'-\mathcal{H}'=\cl\mathcal{H}-\mathcal{H}$. We will construct our isotopy in $\mathcal{H}$ so that it extends to a topological ambient isotopy of $\mathcal{H}'$ fixing $\partial \mathcal{H}'$ pointwise. Note that $\cl\mathcal{H}-\mathcal{H}$ is a closed subset of $X$. (Every neighborhood of a point $x$ in $\cl\mathcal{H}-\mathcal{H}$ must intersect infinitely many handles of $\mathcal{H}$, since any finite subhandlebody is closed in $X$. The same then applies to any $x\in\cl(\cl\mathcal{H}-\mathcal{H})$, which then cannot be in $\mathcal{H}$ but must be in $\cl\mathcal{H}$.) It follows that the complement $X^0$ of $\cl\mathcal{H}-\mathcal{H}$ in $X$ is a manifold in which $\partial\mathcal{H}'$ is a closed subset, so the isotopy on $\mathcal{H}'$ extends by the identity over the rest of $X^0$. Since $\cl\mathcal{H}-\mathcal{H}$ is totally disconnected, adding these points back to $X^0$ is locally the same as endpoint compactifying, so the isotopy extends over the rest of $X$ by the identity on $\cl\mathcal{H}-\mathcal{H}$. (For the variation in Remark~\ref{isoRem}(a), prove continuity at $x\in\cl\mathcal{H}-\mathcal{H}$ by finding, for a given $\epsilon>0$, an $\frac{\epsilon}{2}$-small neighborhood $U$ of $x$ in $X$ such that each $y\in U\cap\mathcal{H}'$ lies in a subhandlebody $\mathcal{H}_y$ of $\mathcal{H}'$ with $\mathcal{H}_y-W$ $\frac{\epsilon}{2}$-small, then applying the last sentence of the theorem.)

To construct the required isotopy in $\mathcal{H}'$, note that by Quinn's Handle Straightening Theorem \cite[2.2.2]{Q}, we can assume that the embedding is smooth near the center of each 0-handle of $\mathcal{H}$, so that after shrinking the 0-handles by a topological ambient isotopy, we can assume they are smooth. The same reasoning smooths the 1-handles. (Quinn's theorem gives a smooth neighborhood of the core union the attaching region, so we can presume the shrink preserves each attaching map into $\partial W$.) For each 2-handle, we apply Theorem~\ref{embFH}, completing the proof in the case without complex structures (with the isotopy fixing $W$ pointwise). In the complex case, we refine so that by Theorem~\ref{AnnGCH}, the new Freedman handlebody $\G$ abstractly admits a Stein structure homotopic to the complex structure inherited from $X$. Since $\G$ is properly embedded in the manifold $X-(\cl\G-\G)$, Theorem~\ref{main} completes the proof.
\end{proof}

To construct Stein onions we will need a Stein version of Addendum~\ref{embFHAdd}. Recall that part (a) of that addendum arranged the high stages of the Freedman handle constructed in Theorem~\ref{embFH} to lie in small balls. Part (b) located the entire Freedman handle inside a small topological ball $B$, except for a tubular neighborhood of an annulus in the first stage core, disjoint from $\inter B$. This was done preserving a preassigned subtower $T_n$ of the Freedman handle up to smooth isotopy.

\begin{adden}\label{isoAdd} (a) We can assume the Freedman handles of $\G$ are controlled by small balls as in Addendum~\ref{embFHAdd}(a).

\item[(b)] Suppose that the 2-handles of $\mathcal{H}$ are exhibited as finite towers with 2-handles attached, with totally real first-stage cores $F_i$, and that the resulting  subtower $\mathcal{T}\subset\mathcal{H}\subset X$ is smooth and Stein in the structures inherited from $X$. Then the isotopy in Theorem~\ref{iso} can be assumed to be smooth on $\mathcal{T}$, sending it onto a subtower of $\G$, so that each Freedman handle of $\G$ satisfies the conclusions of Addendum~\ref{embFHAdd} with its first stage core totally real and agreeing near $\partial\mathcal{H}_1$ with the corresponding $F_i$, and the complement in $\G$ of the topological ball interiors of Addendum~\ref{embFHAdd}(b) having pseudoconvex boundary.
\end{adden}

\begin{proof}
Only (b) requires comment. When we apply Theorem~\ref{embFH} in the above proof, we apply it not to each original 2-handle, but to its top stage topological 2-handles. This gives a Freedman handle containing the original subtower, to which Addendum~\ref{embFHAdd} applies. Since each $F_i$ is changed by a smooth isotopy fixing a neighborhood of $\partial F_i$, it can again be assumed totally real because the obstructions continue to vanish. We can add a tubular neighborhood of each resulting totally real annulus to $\mathcal{H}_1$ without disturbing pseudoconvexity (for example, by exhibiting each annulus as a relative 2-complex and using Eliashberg's method). To complete the proof, apply Theorems~\ref{AnnGCH} and \ref{main} as before to the generalized Casson handles remaining after the tubular neighborhoods of annuli are transferred to $\mathcal{H}_1$. (Theorem~\ref{AnnGCH} does not refine $\mathcal{T}$ since it was already given to be a Stein tower with the relevant almost-complex structure.)
\end{proof}

The Stein surfaces $\inter\G$ arising in Theorem~\ref{iso} can be chosen flexibly, as Remark~\ref{isoRem}(b) indicates. The proof also constructs them universally, so that the same diffeomorphism type arises for all triples $W\subset\mathcal{H}\subset X$ that agree up to diffeomorphism on neighborhoods of $\cl\mathcal{H}$ with the almost-complex structures agreeing up to homotopy rel $W$. As an aside, we show that when $W$ is empty, we can relax control of the almost-complex structure: If $\rm H^2(\mathcal{H})$ is finitely generated, we can obtain this universality when the Chern classes $c_1(X)|\mathcal{H}$ are restricted to a bounded region. More generally, fix a cochain $z\in \rm H^2(\mathcal{H},\mathcal{H}_1;\zed)$ and consider all smooth embeddings of the form $f\co V \to Y$, where $Y$ is a complex surface, $V$ is a neighborhood of $\cl\mathcal{H}$ in $X$ and $f(\cl \mathcal{H})$ is closed in $Y$, such that $c_1(X)|\mathcal{H}$ and $f^* c_1(Y)|\mathcal{H}$ are represented by cocycles differing on each 2-handle $h$ by at most $|\langle z,h\rangle|$.

\begin{cor} \label{diff}
For $\mathcal{H}\subset X$ as in Theorem~\ref{iso} with $W=\emptyset$, fix $z\in \rm H^2(\mathcal{H},\mathcal{H}_1;\zed)$ and consider all embeddings $f$ as above. Then the Stein Freedman handlebody $\G \subset X$ obtained from $\mathcal{H} \subset X$ by Theorem~\ref{iso} can be chosen so that each restricted map $f|\G$ is smoothly isotopic (topologically ambiently in $Y$, sending each subhandlebody of $f(\mathcal{H})$ into itself and supported near $\cl(\mathcal{H})$) to a smooth embedding $\hat{f}$, with image $\hat{f}(\G)$ a Stein Freedman handlebody arising as in Theorem~\ref{iso} applied to $f(\mathcal{H}) \subset Y$. This $\G$ can be chosen simultaneously for a finite family of topological embeddings $\mathcal{H}\subset X_k$.
\end{cor}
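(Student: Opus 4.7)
The strategy exploits the flexibility of Theorem~\ref{iso} noted in Remark~\ref{isoRem}(b): once $\mathcal{H}\subset X$ is given with an almost-complex structure, each 2-handle may be refined into a Freedman handle subject only to the lower bound on $g+k_+-k_-$ of Addendum~\ref{AnnGCHAdd}. That bound depends on the almost-complex structure only through the integer $c(h,J)$ measured against a fixed reference trivialization on $\mathcal{H}_1$, so the hypothesis that $c_1(X)|\mathcal{H}$ and $f^*c_1(Y)|\mathcal{H}$ are represented by cocycles differing on each 2-handle $h$ by at most $|\langle z,h\rangle|$ bounds the range of $c(h,f^*J_Y)$ uniformly across all admissible $f$, by a quantity depending only on $z$.

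First, fix a standard Legendrian diagram for $\mathcal{H}$ as in the proof of Addendum~\ref{AnnGCHAdd}, which determines integers $f(h)$, $tb(K)$, $r(K)$ and $c(h,J_X)$ for each 2-handle $h$. For each $h$ choose a first-stage triple $(g,k_+,k_-)$ satisfying
\[
g+k_+-k_-\ \ge\ \tfrac{1}{2}\bigl(f(h)+1-tb(K)+|c(h,J_X)-r(K)|+|\langle z,h\rangle|\bigr),
\]
so that the first-stage bound of Addendum~\ref{AnnGCHAdd} holds simultaneously for every possible $c(h,f^*J_Y)$, and impose $g+k_+-k_->0$ on every higher stage. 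Applying Theorem~\ref{iso} with the freedom of Remark~\ref{isoRem}(b) yields a Stein, standard Freedman handlebody $\mathcal{G}\subset X$ with first- and higher-stage invariants as specified. By construction, the abstract combinatorial type of $\mathcal{G}$ meets the hypotheses of Theorem~\ref{AnnGCH} with respect to every almost-complex structure on $\mathcal{H}$ arising as $f^*J_Y$ for admissible $f$.

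Now fix any admissible $f\colon V\to Y$. After choosing the preassigned neighborhood in Theorem~\ref{iso} to lie inside $V$, the Freedman handlebody $\mathcal{G}$ already lies in $V$, and $f(\mathcal{G})\subset Y$ is a smoothly embedded Freedman handlebody of the same combinatorial type whose inherited almost-complex structure lies in the admissible range. Theorem~\ref{main} applied to $f|\mathcal{G}$ in $Y$ produces a smooth (hence topological) ambient isotopy of $Y$, supported near $\cl f(\mathcal{H})$ and carrying each subhandlebody of $f(\mathcal{H})$ into itself, ending at an embedding $\hat f$ whose image $\hat f(\mathcal{G})$ is a Stein Freedman handlebody in $Y$. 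This is precisely the output of running the proof of Theorem~\ref{iso} on $f(\mathcal{H})\subset Y$ with $\mathcal{G}$ as combinatorial template, so $\hat f(\mathcal{G})$ is a legitimate Stein Freedman handlebody arising from Theorem~\ref{iso} applied to $f(\mathcal{H})$. For a finite family $\mathcal{H}\subset X_k$, replace each per-handle bound by the maximum across $k$; this remains finite and yields one combinatorial $\mathcal{G}$ that works for every $X_k$ at once. The main obstacle is achieving a single refinement that serves universally as the complex target varies; it is overcome by the explicit numerical bound of Addendum~\ref{AnnGCHAdd}, which depends on the ambient complex structure only through Chern data controlled by the preassigned cochain $z$.
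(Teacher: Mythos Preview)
Your overall strategy matches the paper's: choose the Freedman handle combinatorics with first-stage $g+k_+-k_-$ large enough that Theorem~\ref{AnnGCH} applies uniformly in $f$, then invoke Theorem~\ref{main}. However, there is a genuine gap in your assertion that the hypothesis bounds $|c(h,J_X)-c(h,f^*J_Y)|$ by $|\langle z,h\rangle|$. The cocycles in the hypothesis correspond to complex trivializations $\tau$ (for $J_X$) and $\tau'$ (for $f^*J_Y$) over $\mathcal{H}_1$, and there is no reason for $\tau'$ to coincide, even up to real homotopy, with the standard trivialization used to define $c(h,\cdot)$ in Addendum~\ref{AnnGCHAdd}. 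Concretely, $\tau$ and $\tau'$ may induce different spin structures on $\mathcal{H}_1$, so they need not be homotopic as real trivializations, and one cannot simply homotope $f^*J_Y$ to agree with $J_X$ over $\mathcal{H}_1$ while preserving both Chern cocycles. The paper handles this by lifting the spin difference class in ${\rm H}^1(\mathcal{H}_1;\zed/2)$ to an integral class taking values $0$ or $1$ on a fixed basis $\{\gamma_i\}$ and using it to modify $\tau'$ to a trivialization $\tau''$ sharing the spin structure of $\tau$; this costs at most $\sum|a_i|$ on each $h$ (where $\partial_*h=\sum a_i\gamma_i$), so the actual bound is $|\langle z,h\rangle|+\sum|a_i|$ rather than $|\langle z,h\rangle|$. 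Your displayed inequality is therefore not sufficient as written.

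A smaller issue: for the finite family of topological embeddings $\mathcal{H}\subset X_k$, taking the maximum of the first-stage bounds does not by itself force the Freedman handlebodies in the various $X_k$ to have identical combinatorics at every stage, since the higher stages are produced by Quinn's theorem inside topological $2$-handles that differ with $k$. The paper instead runs the inductive construction of Theorem~\ref{embFH} simultaneously in all $X_k$, refining at each step so the combinatorics agree.
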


\begin{adden}\label{diffAdd}
Fix a basis $\{ \gamma_i\}$ for $\rm H_1(\mathcal{H}_1;\zed)$. For each 2-handle $h$, let $m_h=\frac12(|\langle z,h\rangle|-1+\sum |a_i|)$, where $\partial_*h=\sum a_i\gamma_i$. Then $\G$ can be chosen as in Remark~\ref{isoRem}(b), provided that in Addendum~\ref{AnnGCHAdd}, the lower bound for $g+k_+-k_-$ of each first stage core is increased by $m_h$ for the corresponding 2-handle $h$.
\end{adden}

\noindent The addendum is most powerful when $\mathcal{H}$ is smoothly embedded, but can also be applied in the topological case if we have information about the combinatorics of one $\G\subset\mathcal{H}$.

\begin{proof}[Proof of Corollary~\ref{diff} and Addendum~\ref{diffAdd}]
Given $\mathcal{H}\subset X$ and $f\co V\to Y$ as in the corollary, apply Theorem~\ref{iso} to $\mathcal{H}$, working inside $V$. The resulting images $\G\subset \mathcal{H}$ and $f(\G)\subset f(\mathcal{H})$ are independent of choice of $V$, by construction. We will show that when $\G$ is suitably chosen, independently of $f$, Theorem~\ref{AnnGCH} can be applied to $f(\G)$ for the given almost-complex structure $J_Y$ on $Y$. Theorem~\ref{main} then gives the required $\hat{f}(\G)$ as in the proof of Theorem~\ref{iso}. For a finite collection of topological embeddings $\mathcal{H}\subset X_k$, we build the Freedman handlebodies simultaneously, refining as necessary to ensure that they have the same combinatorics, then proceed as before.

Let $\tau$ and $\tau'$ be complex trivializations over $\mathcal{H}_1$ of $J$ and $f^*J_Y$, respectively, for which the relative Chern classes differ on each 2-handle $h$ by at most $|\langle z,h \rangle |$. Then $\tau$ and $\tau'$ need not be homotopic as real trivializations because they may determine different spin structures on $\mathcal{H}_1$. The difference class for these spin structures is an element of $\rm H^1(\mathcal{H}_1;\zed_2)$. Lift it to an integral class whose value on each basis element $\gamma_i$ is either 0 or 1, and use this integral difference class to change $\tau'$ to a new complex trivialization $\tau''$ for $f^*J_Y$. The new relative Chern class $c_1(f^*J_Y,\tau'')$ differs from $c_1(J,\tau)$ on a given $h$ by an integer $\delta_h$ with $|\delta_h|\le|\langle z,h \rangle |+ \sum |a_i|=2m_h+1$. By construction, $\tau''$ and $\tau$ determine the same spin structure on $\mathcal{H}_1$, so they are homotopic as real trivializations. Thus, after homotopy of $J_Y$, we can assume that $\tau''=\tau$ and $f^*J_Y$ agrees with $J$ over $\mathcal{H}_1$. Now each first stage surface $F$ of $\G$ has Legendrian boundary and $c(f(F))-c(F)=\delta_h$ bounded independently of $f$. If each such $F$ has sufficiently large $g+k_+-k_-$, then Theorem~\ref{AnnGCH} applies to each $f(\G)$ as required. More precisely (to prove the addendum), note that $c_1(f^*J_Y,\tau)$ and $c_1(J,\tau)$ over $\mathcal{H}$ both reduce mod 2 to $w_2(TX,\tau)|\mathcal{H}$, so each $\delta_h$ is even. Both sides of the inequality of Addendum~\ref{AnnGCHAdd} are integers, and $f$ increases the right side by at most $\frac12 |\delta_h|$. This is at most $m_h$ if the latter is an integer. Otherwise, increasing the bound by $m_h$ automatically increases it by the integer $m_h+\frac12$. Either way, Addendum~\ref{diffAdd} holds, with the higher stages addressed as for Addendum~\ref{AnnGCHAdd}.
\end{proof}

%%%%%%%%%%%%%%444444444444444444444444444444444444444%%%%%%%%%%%%%%%%%%

\section{Stein onions} \label{Onionproof}

In this section we construct Stein onions, i.e., topological mapping cylinders adapted to Stein theory (Definition~\ref{oniondef}). We apply Freedman's method that constructs a homeomorphism from a 2-handle to a Freedman handle. Since we already have Freedman's theorems available, we can significantly shorten the argument in several places. In passing, we also obtain a proof that every open generalized Casson handle is homeomorphic to an open 2-handle (Corollary~\ref{homeo}). 

Our starting point is a 2-handlebody $\mathcal{H}$ with a smooth mapping cylinder structure $\psi$ as in the introduction. This is constructed by induction on the indices of the handles, resulting in a core 2-complex $K$ that intersects each handle radially and respects the product structure on the 1-handles. In place of $W$ from our previous theorems, we work relative to a Stein structure on a subhandlebody $\mathcal{H}^-$ of $\mathcal{H}$ that contains all 0- and 1-handles of $\mathcal{H}$. Then $\psi$ is made from a mapping cylinder structure $\psi^-$ on $\mathcal{H}^-$ by modification in the region $R$ descending radially (with respect to $\psi^-$) from the 2-handles outside $\mathcal{H}^-$. We assume the Stein structure on $\mathcal{H}^-$ has the following compatibility with $\psi$, which can always be arranged if $\mathcal{H}^-$ is built by Eliashberg's method, via an isotopy that is smooth except at the 0-cells: The open 2-cells of $K$ intersect $\mathcal{H}^-$ in totally real surfaces, and $\psi^-$ has a sequence of sublevel sets $\mathcal{H}^-_{\sigma_i}$ with $\sigma_i\to0$ exhibited as Stein handlebodies, with the 2-cells of $K$ intersecting each boundary in a Legendrian link. We wish to study a Stein generalized Casson handlebody $\G$ modeled on $(\mathcal{H},\mathcal{H}^-)$. We assume its first stage consists of Stein tubular neighborhoods of totally real immersed surfaces extending the annuli of $K\cap \mathcal{H}^-$, as follows for $\G$ constructed by Theorem~\ref{AnnGCH} or \ref{iso}. (This is true by construction for Theorem~\ref{AnnGCH}, which also exhibits the first stage subtower as a Stein handlebody. It then follows for Theorem~\ref{iso} since the map $\hat{f}$ of Theorem~\ref{main} used in the last sentence of the proof of Theorem~\ref{iso} is a contactomorphism on subhandlebody boundaries \cite{steindiff}, preserving the Stein handle structure coming from Theorem~\ref{AnnGCH}.) We focus on the case when $\mathcal{H}$ is a finite 2-handlebody. The general case (which we only need for the infinite cases of Theorem~\ref{onion1} and a few applications) is given at the end of the section.

\begin{thm} \label{onion}
Let $\G$ be a Stein generalized Casson handlebody modeled on a finite handlebody pair $(\mathcal{H},\mathcal{H}^-)$ as above. Let  $P\subset \inter\G$ be a closed subset of $\bar\G$. Then there is a Cantor set $\Sigma \subset [0,1]$ containing $\{0,1\}$, and a quotient map $g\co\mathcal{H} \to \bar{\G}$ that is a homeomorphism except for collapsing on the boundaries of 2-handles whose images are not Freedman handles, such that $g$ satisfies the following conditions:
\begin{itemize}
\item[a)] For each $\sigma \in\Sigma-\{0,1\}$, the image $\bar\G_\sigma=g(\mathcal{H}_\sigma)$ is a handle-compactified standard Stein Freedman handlebody containing the image of $P$ under some smooth ambient isotopy of $\G$.

\item[b)] The map $g$ is the identity on $\mathcal{H}^--R$ and on $K\cap\mathcal{H}^-$, and restricts diffeomorphically, with totally real image, to the complement of a point on each other open 2-cell of $K$.
\end{itemize}
\end{thm}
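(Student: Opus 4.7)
The plan is to produce a nested family $\{\bar{\G}_\sigma\}_{\sigma\in\Sigma}$ of handle-compactified standard Stein Freedman handlebodies indexed by a Cantor set $\Sigma\subset[0,1]$ with $\bar{\G}_1=\bar{\G}$, $\bar{\G}_\sigma\subset\inter\bar{\G}_{\sigma'}$ whenever $\sigma<\sigma'$ in $\Sigma-\{0\}$, and $\bigcap_{\sigma\in\Sigma-\{0\}}\bar{\G}_\sigma$ a 2-complex that serves as $g(K)$. The map $g$ is then assembled by sending $\psi(\{t\}\times\partial\mathcal{H})$ to $\partial\bar{\G}_t$ for $t\in\Sigma-\{0\}$ (using the identifications $\partial\mathcal{H}\cong\partial\mathcal{H}_t\cong\partial\bar{\G}_t$, with the last coming from Proposition~\ref{fh1}(c)), interpolating across the gaps of $\Sigma$ via the collar structures between consecutive Stein Freedman levels, and mapping $K$ onto its image by collapsing the radial direction to the appropriate limit points.

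The construction proceeds by dyadic induction. Set $\bar{\G}_1=\bar{\G}$, push a scaled copy $\mathcal{H}_{1/2}\subset\mathcal{H}$ into $\inter\bar{\G}$ via $\psi$ so as to contain $P$, and apply Theorem~\ref{iso} rel $\mathcal{H}^-$ to $\mathcal{H}_{1/2}$ inside the Stein surface $\inter\bar{\G}$. Addendum~\ref{isoAdd}(b) allows one to preserve the existing totally real first-stage cores and the Stein subhandlebody structure on $\mathcal{H}^-$, producing a standard Stein Freedman handlebody $\bar{\G}_{1/2}\subset\inter\bar{\G}_1$. Inductively, between any two consecutive previously chosen dyadic levels $\bar{\G}_\beta\subset\inter\bar{\G}_\alpha$, insert $\bar{\G}_\gamma$ (with $\beta<\gamma<\alpha$) by the same procedure: use $\psi$ to isotope $\mathcal{H}_\gamma$ into the collar region $\inter\bar{\G}_\alpha-\bar{\G}_\beta$ so that it contains $\bar{\G}_\beta$, then apply Theorem~\ref{iso} with Addendum~\ref{isoAdd}(b) relative to $\mathcal{H}^-\cup\bar{\G}_\beta$. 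Diameter bounds from Addendum~\ref{embFHAdd}(a) are chosen at each stage so that, as $\sigma\to 0$ in $\Sigma$, the sets $\bar{\G}_\sigma$ collapse onto a 2-complex topologically matching $K$, making the limiting $g$ continuous.

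The restriction of $g$ to $K$ is the direct limit of the totally real cores built at each stage: on each open 2-cell $D$ outside $\mathcal{H}^-$, the cores agree along $K\cap\mathcal{H}^-$ and on the annuli added at each successive refinement, producing a smooth totally real embedding of $D$ minus a single point---the accumulation point of the radial collapse. On $\mathcal{H}^--R$ and on $K\cap\mathcal{H}^-$, $g$ is the identity by the compatibility of $\psi^-$ with the given Stein structure on $\mathcal{H}^-$, and on boundaries of 2-handles whose corresponding generalized Casson handle is not Freedman, $g$ collapses as forced by the quotient map $\partial h\to\partial\bar{G}$ of Proposition~\ref{fh1}(c). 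The smooth ambient isotopy of $\G$ sending $P$ into $\bar{\G}_\sigma^o$ is obtained by flowing along $\psi$ to shrink $\G$ into a small enough radial parameter, then composing with the diffeomorphism $\G\cong\bar{\G}_\sigma^o$ provided by the nested Stein subhandlebody structure.

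The main obstacle will be the inductive step: producing $\bar{\G}_\gamma$ requires simultaneously that (i) every previously built $\bar{\G}_\beta$ appear as a genuine Stein subtower of $\bar{\G}_\gamma$, (ii) the first-stage totally real cores of $\bar{\G}_\gamma$ extend smoothly across each annulus from those of $\bar{\G}_\beta$, and (iii) $\bar{\G}_\gamma$ be topologically ambient isotopic rel boundary to a scaled copy of $\mathcal{H}$. Addendum~\ref{isoAdd}(b) is the crucial tool here, since it explicitly permits a preassigned Stein subtower with totally real first-stage cores to be preserved while a new Stein Freedman layer is inserted. Diameter control from Addendum~\ref{embFHAdd}(a) then forces the inverse limit to collapse precisely onto the 2-complex $K$, rather than onto a larger set, and guarantees that $g$ is continuous at the nonsmooth point of each open 2-cell.
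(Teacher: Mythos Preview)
Your outline captures the broad strategy but skips the genuinely hard step, and one of your inductive moves does not work as stated.

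First, the crucial gap: you say you will assemble $g$ by ``interpolating across the gaps of $\Sigma$ via the collar structures between consecutive Stein Freedman levels.'' But there is no such collar structure given to you. If $\bar\G_\beta\subset\inter\bar\G_\alpha$ are two consecutive levels (endpoints of a removed middle interval of $\Sigma$), the region $\bar\G_\alpha-\inter\bar\G_\beta$ is \emph{not} obviously a product $[\beta,\alpha]\times\partial\mathcal{H}$; its boundary is only a topological $3$-manifold, wild at a Cantor set. In the paper's proof this is the heart of the matter: one first builds \emph{partial} collars $C_\tau$ along finite smooth subtowers, leaving countably many ``holes'' in $\mathcal{H}$ (each $\approx S^1\times D^3$) with corresponding ``gaps'' $\Gamma$ in $\bar\G$. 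One then computes $\pi_1(\Gamma)\cong\Z$ and ${\rm H}_{\ge2}(\Gamma)=0$ (this uses the $\pi_1\cong\Z$ statements carefully set up in Lemma~\ref{2h} and the basic/thin procedures), and finally invokes Freedman's surgery and $s$-Cobordism Theorem with $\pi_1\cong\Z$ to identify each gap with $S^1\times D^3$ rel boundary. Only then can $g$ be extended across the holes. Your proposal does not mention any of this, and without it there is no way to define $g$ between the Cantor levels.

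Second, your inductive step says to apply Theorem~\ref{iso} and Addendum~\ref{isoAdd}(b) ``relative to $\mathcal{H}^-\cup\bar\G_\beta$.'' But $\bar\G_\beta$ is the handle-compactification of a Freedman handlebody: its boundary is only a topological $3$-manifold, not smooth, and certainly not pseudoconvex. It cannot serve as the $W$ in Theorem~\ref{iso}, nor as the smooth Stein subtower $\mathcal{T}$ required by Addendum~\ref{isoAdd}(b). The paper avoids this by always working relative to \emph{finite smooth subtowers} $\mathcal{T}_\tau$ (which are compact Stein handlebodies with genuinely pseudoconvex boundary) rather than relative to full compactified Freedman handlebodies. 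The Freedman handlebodies $\bar\G_\sigma$ with $\sigma$ having infinitely many zeroes are only assembled at the end, as nested intersections of these finite towers.

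Finally, a smaller point: you claim a ``diffeomorphism $\G\cong\bar\G_\sigma^o$ provided by the nested Stein subhandlebody structure.'' No such diffeomorphism exists in general---indeed, Section~\ref{Exotic} is devoted to showing that the various $\inter\G_\sigma$ typically realize infinitely or uncountably many distinct diffeomorphism types. The ambient isotopy carrying $P$ into $\G_\sigma$ arises instead because each inductive step moves the previously constructed objects by a smooth ambient isotopy of $\G$ before building the next layer.
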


\begin{Remark}\label{sig0}
For infinite $\mathcal{H}$ the same proof applies, up to smooth isotopy of $g|\mathcal{H}^-$ (fixing $K$ setwise) if $\Sigma$ is replaced by a suitable countable subset $\Sigma_0\subset\Sigma$ with a cluster point at 0. To get all of $\Sigma$, see Theorem~\ref{onion2}.
\end{Remark}
 
\begin{proof}[Proof of Theorem~\ref{onion1} for finite $\mathcal{H}$] We wish to turn a given embedding into a Stein onion on a given mapping cylinder $\psi$. By Theorem~\ref{iso} with $W$ empty, the embedding is topologically ambiently isotopic to an embedding $f\co\mathcal{H}\to X$ with image a handle-compactified standard Stein Freedman handlebody $\bar\G$. Theorem~\ref{onion} gives a homeomorphism $g\co\mathcal{H}\to\bar\G$ that agrees with $f$ on $\mathcal{H}^--R$, where $\mathcal{H}^-$ is the union of 0- and 1-handles of $\mathcal{H}$. After ambient isotopy of $f$ rel $\mathcal{H}^--R$, we can assume it agrees with $g$ on the remaining boundary solid tori of $\mathcal{H}$, and then on the remaining 4-balls by the Alexander trick. The mapping cylinder structure provided by Theorem~\ref{onion} then agrees with $\psi$. Since $\Sigma$ is unique up to topological isotopy of $[0,1]$, we can assume it is standard as required by Theorem~\ref{onion1}.
\end{proof}
 
\begin{proof}[Proof of Theorem~\ref{onion}]
We will construct the required map $g\co\mathcal{H}\to\bar\G$ following Freedman's original method, with a modification that guarantees the required control near $K$. Recall that the standard Cantor set $\Sigma_{\rm std}$ consists of those real numbers in $[0,1]$ admitting ternary expansions composed only of the digits 0 and 2, so for example, $1/3=.1=.0\bar 2\in\Sigma_{\rm std}$, where $\bar 2$ denotes an infinite string of twos. (We sometimes write the digit 1 for convenience in describing strings of 0 and 2.) For simplicity of notation, we will parametrize the Cantor set $\Sigma\subset [0,1]$ that we construct as though it were standard. This parametrization will extend to a self-homeomorphism of $[0,1]$, but typically not a self-diffeomorphism. For Remark~\ref{sig0}, set $\Sigma_0=\{0\}\cup\{1/3^n|n\in\zed^{\ge0}\}\subset\Sigma$. We will construct $g$ in pieces, starting from the identity on $\mathcal{H}^--R$. To extend over each remaining 2-handle of $\mathcal{H}$, we will use a partial parametrization as described in  \cite[4.3]{FQ}. That is, we will find an uncountable nest of Freedman handles $G_\sigma$ in each generalized Casson handle $G=G_1$ of $\G$, indexed by $\sigma\in\Sigma -\{0,1\}$. We will extend $g$ over each mapping cylinder level $\partial\mathcal{H}_\sigma$ with $\sigma\in\Sigma -\{0\}$ so that its image intersects $\bar G$ as $\partial_+\bar G_\sigma$. We will also extend $g$ to cover certain collars of boundary regions in each $G_\sigma$. These collars $C_\tau$ will be defined along certain subtowers of $G_\sigma$, which we will index by finite sequences $\tau$ of the digits 0 and 2 obtained by truncating the ternary expansion of $\sigma$. (See Figure~\ref{param}. Note that .2 and .20 will correspond to different subtowers.) We will then have $g$ defined outside a countable collection of ``holes''. Finally, we will use the full power of Freedman's surgery theory to homeomorphically extend $g$ over the holes, completing the construction.

\begin{figure}
\labellist
\small\hair 2pt
\pinlabel $\mathcal{H}^-$ at 45 64
\pinlabel $0$ at 290 14
\pinlabel $.01=.00\bar2$ at 305 25
\pinlabel $.02$ at 290 36
\pinlabel $.1=.0\bar2$ at 300 48
\pinlabel $.2$ at 290 82
\pinlabel $.21=.20\bar2$ at 305 93
\pinlabel $.22$ at 290 104
\pinlabel $1=.\bar2$ at 298 116
\pinlabel $C_{.0}$ at 129 28
\pinlabel $C_{.2}$ at 119 98
\pinlabel $C_{.00}$ at 182 19
\pinlabel $C_{.02}$ at 182 41
\pinlabel $C_{.20}$ at 162 87
\pinlabel $C_{.22}$ at 170 109
\pinlabel $C_{.022}$ at 205 53
\pinlabel $C_{.200}$ at 205 76
\pinlabel $C_{.222}$ at 209 120
\pinlabel $A_{.0}$ at 129 0
\pinlabel $A_{.00}$ at 182 0
\pinlabel $A_{.000}$ at 226 0
\pinlabel $\cdots$ at 258 19
\pinlabel $\cdots$ at 251 41
\pinlabel $\cdots$ at 254 86
\pinlabel $\cdots$ at 252 108
\pinlabel $.01=.00\bar2$ at 305 25
\pinlabel $.02$ at 290 36
\pinlabel $.1=.0\bar2$ at 300 48
\pinlabel wild\ Cantor\ sets at 350 65
\pinlabel $\partial\mathcal{H}\to\partial\bar\G$ at 125 123
\pinlabel $K$ at 45 5
\pinlabel $K$ at 248 5
\pinlabel holes at 160 64
\endlabellist
\centering
\includegraphics{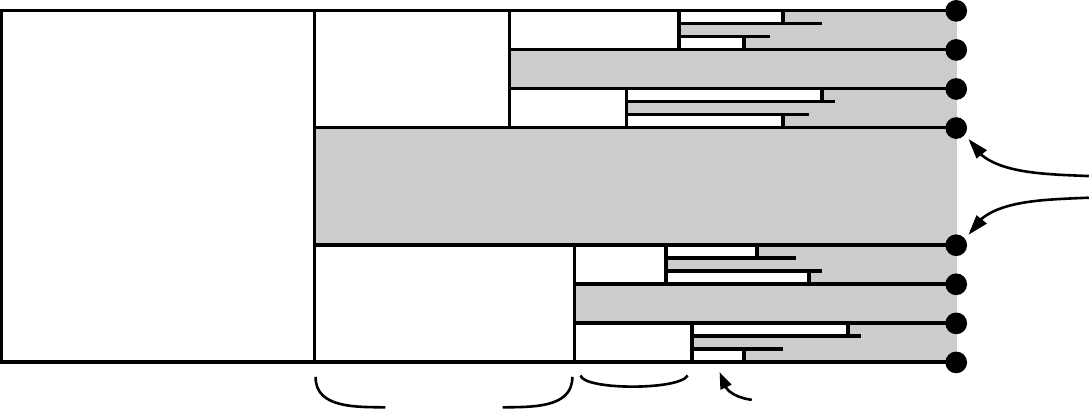}
\caption{Domain of the partial parametrization in $\mathcal{H}$, with labels to indicate the embedding $g$. The vertical axis represents the $I$-coordinate of the mapping cylinder on a given 2-handle, with various levels and collars schematically indicated.}
\label{param}
\end{figure}

\begin{figure}
\labellist
\small\hair 2pt
\pinlabel (a) at 63 101
\pinlabel $\mathcal{H}^-$ at 7 -5
\pinlabel $G$ at 36 -5
\pinlabel $Z$ at 44 28
\pinlabel $\mathcal{T}$ at 106 55
\pinlabel $\mathcal{T}'$ at 82 49
\pinlabel $\mathcal{G}'$ at 115 13
\pinlabel $\mathcal{H}'$ at 132 33
\pinlabel $G^*$ at 131 55
\pinlabel $\mathcal{G}$ at 102 84
\pinlabel (b) at 285 101
\pinlabel $\mathcal{H}^-$ at 233 -5
\pinlabel $\mathcal{H}^-_{\sigma_i}$ at 223 101
\pinlabel $R$ at 240 43
\pinlabel $K$ at 210 53
\pinlabel $A$ at 273 4
\pinlabel $C$ at 277 17
\pinlabel $\mathcal{G}'$ at 323 20
\pinlabel $\mathcal{G}$ at 324 84
\pinlabel $B$ at 303 49
\endlabellist
\centering
\includegraphics{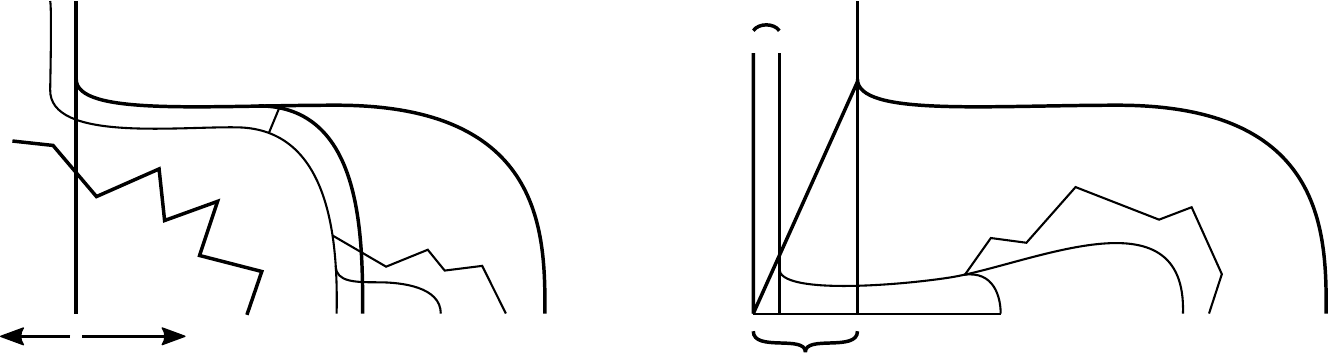}
\caption{Schematic diagrams of the basic (a) and thin (b) embedding procedures. The radial coordinate in $\mathcal{H}^-$ increases toward the right, with $K\cap\mathcal{H}^-$ along the left and bottom edges in (b) (coming from the core of $\mathcal{H}^-$ and the 2-cell of $K$ corresponding to $G$, respectively).}
\label{procedures}
\end{figure}

We begin with our basic procedure for constructing a single Stein Freedman handlebody $\G'$ in $\inter\G$, so that it contains a preassigned closed subset $Z$ of $\bar\G$ that lies in $\inter \G$ (Figure~\ref{procedures}(a)). For each generalized Casson handle $G$ of $\G$ attached to $\mathcal{H}^-$, compactness of $Z\cap G$ implies that some finite subtower of $G$ contains $Z\cap G$ disjointly from $\partial_+$. Let $\mathcal{T}$ be the subtower of $\G$ consisting of $\mathcal{H}^-$ together with these towers (of varying height). Recall that $\partial\mathcal{T}$ is pseudoconvex, which is an open condition. Since $\mathcal{H}$ is finite, $\partial\mathcal{T}$ is compact, so some $\psi|([1-\epsilon,1]\times(\partial\mathcal{H}\cap\partial\mathcal{H}^-))$ rescales and extends to a boundary collar $I\times\partial\mathcal{T}\to\mathcal{T}$, disjoint from $Z$, whose levels mapped from $\{t\}\times\partial\mathcal{T}$ are pseudoconvex. (In the infinite case, the same holds without pseudoconvexity, up to isotopy to separate the collar from $Z$.) Let $\mathcal{T}'\subset\inter\mathcal{T}$ be a parallel copy of $\mathcal{T}$ bounded by such a level, a Stein tower with Stein handlebody realization similarly constructed from that of $\mathcal{T}$. Note that $\G$ is made from $\mathcal{T}$ by attaching generalized Casson handles $G^*$ (the higher stages $\cl(\G-\mathcal{T})$ of the original generalized Casson handles). Attach each such $G^*$ to $\mathcal{T}'$ by a vertical collar of $\partial_- G^*$. Lemma~\ref{2h} locates a 2-handle $h_{G^*}$ topologically embedded in each (extended) $G^*$. Let $\mathcal{H}'\subset \G$ be the union of $\mathcal{T}'$ with these topological 2-handles. Apply Theorem~\ref{iso} to $\mathcal{H}'\subset \G$ with $W =\mathcal{T}'$, obtaining the required Stein Freedman handlebody $\G'\supset Z$. By Addendum~\ref{isoAdd}(a), we can assume that for each $n$, the components of $\bar\G'$ minus its first $n$ stages lie in disjoint balls of diameter less than $1/2^n$ whenever they are disjoint from $\mathcal{T}'$. The isotopy of Theorem~\ref{iso} sends $h_{G^*}$ to the corresponding  compactified Freedman handle in $\bar\G'$, so $\pi_1(G^*-\bar\G')\cong \pi_1(G^*-h_{G^*})\cong \zed$ (with the last isomorphism given by Lemma~\ref{2h}).

In addition to this basic procedure, our modification to control $g$ near $K$ requires a ``thin'' variation that changes $Z$ by a smooth ambient isotopy (Figure~\ref{procedures}(b)). Construct $\mathcal{H}'$ as before. After a smooth isotopy rel $\mathcal{H}^--R$ thinning the first stage of $\mathcal{T}'$, we can assume $\mathcal{T}'$ is a Stein tower built from one of the given Stein sublevel sets $\mathcal{H}^-_{\sigma_i}$ ($i$ large). Apply Theorem~\ref{iso} as before, but with $W=\mathcal{H}^-_{\sigma_i}$. By Addendum~\ref{isoAdd}(b), we can assume that  each generalized Casson handle $G$ of $\G-\mathcal{H}^-$ contains an arbitrarily small topological ball $B$ such that $\bar\G'\cap(G-\inter B)$ is a tubular neighborhood $C$ of a totally real annulus $A$ extending an annulus of $K\cap\mathcal{H}^-$, and that the complement of these balls in $\G'$ has pseudoconvex boundary. Since we have only changed $\mathcal{T}'$ by smooth ambient isotopies, it still contains an isotopic copy of $Z$. As before, $\pi_1(G-\bar\G')\cong \pi_1(G-\mathcal{H}')\cong \zed$. (For the last isomorphism, abstractly change each pair $(G^*,h_{G^*})$ to a standard 2-handle pair to transform $(G,G\cap\mathcal{H}')$ to a standard 2-handle pair without changing $\pi_1(G-\mathcal{H}')$.)

\begin{figure}
\labellist
\small\hair 2pt
\pinlabel $\mathcal{H}^-$ at 27 -5
\pinlabel $K$ at -7 122
\pinlabel $C_{.0}$ at 95 31
\pinlabel $C_{.2}$ at 137 154
\pinlabel $C_{.00}$ at 162 32
\pinlabel $C_{.02}$ at 165 65
\pinlabel $C_{.20}$ at 212 121
\pinlabel $C_{.22}$ at 235 148
\pinlabel $C_{.222}$ at 317 156
\pinlabel $A_{.0}$ at 92 10
\pinlabel $A_{.00}$ at 160 10
\pinlabel $A_{.000}$ at 196 10
\pinlabel $\partial\G=\partial\G_1=\partial\G_{.\bar{2}}$ at 190 173
\pinlabel $\partial\G_{.1}=\partial\G_{.0\bar{2}}$ at 214 74
\pinlabel $\partial\G_{.01}$ at 226 13
\pinlabel $\partial\G_{.21}$ at 339 77
\pinlabel $\partial\G_{.221}$ at 309 125
\pinlabel $B_{.00}$ at 240 28
\pinlabel $B_{.0}$ at 290 34
\endlabellist
\centering
\includegraphics{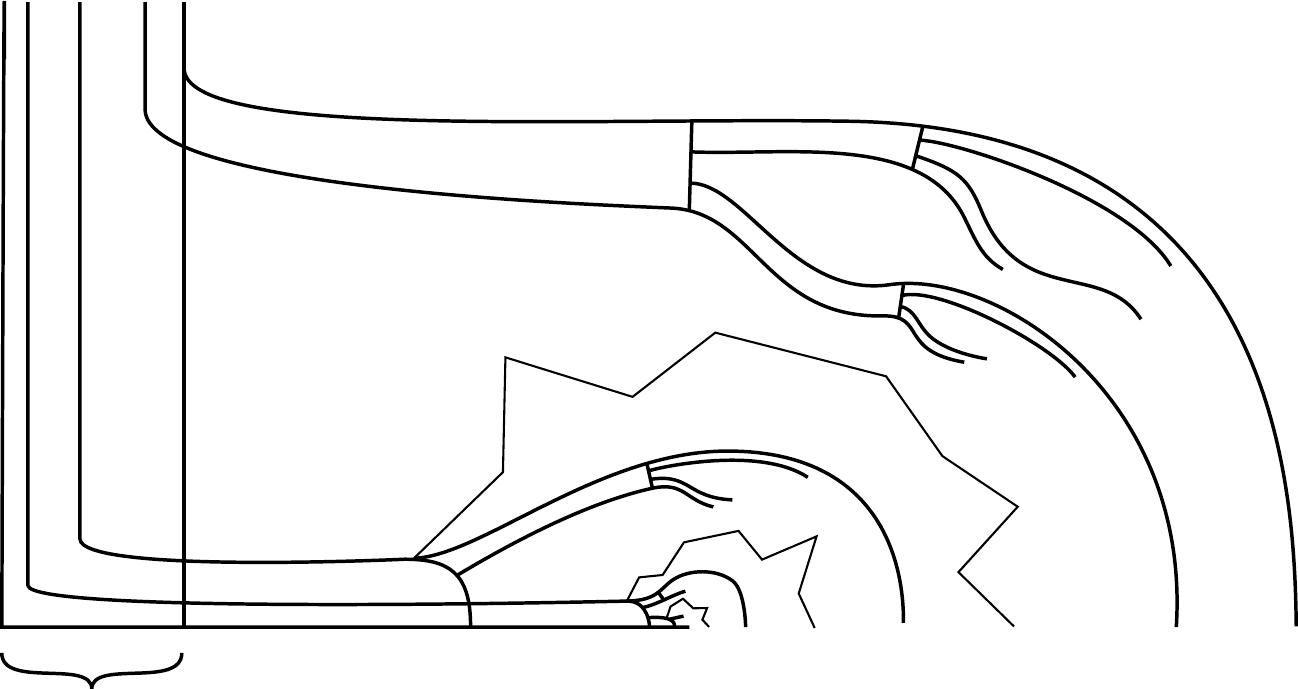}
\caption{Schematic diagram of the image of the partial parametrization. Each $\mathcal{T}_\tau$ with $\tau=.0\dots0$ is depicted as the union of $C_\tau$ with a suitable $\mathcal{H}_{\sigma_i}^-$. For $\tau\ne.0\dots0$ see the next figure.}
\label{image}
\end{figure}

The construction of the partial parametrization proceeds in steps, with the $k^{th}$ step consisting of $2^k$ substeps indexed by ternary expansions $\tau\in\Sigma$ of length $k$. Each substep constructs $g$ on $C_\tau$ and on the entire level indexed by $\tau1=\tau0\bar2$ (where we append digits to the string $\tau$). Each uses the basic procedure above, except for the innermost substeps $\tau=.0\dots 0$, which use the thin variation. In those cases, $C_\tau$ is not a collar but a tubular neighborhood of an annulus $A_\tau$, and first appears one step previously. To set up the construction, which is schematically depicted in Figure~\ref{image}, we define $g|\mathcal{H}^-$ to be the identity (up to subsequent isotopy on $R$). Then we extend $g$ over the boundary of each remaining handle of $\mathcal{H}$ as in Proposition~\ref{fh1}(c) (sending a generalized Bing--Whitehead compactum onto $\bar \G-\G$) so that $\bar \G_1=\bar \G$. The unique substep of the $k=0$ step ($\tau=.$) is innermost, so we apply the thin procedure with $Z=P$, constructing a Stein Freedman handlebody $\G_{.1}$ containing a smoothly isotopic copy of $P$. Then each generalized Casson handle $G$ of $\G-\mathcal{H}^-$ contains a ball $B_{.0}$ and neighborhood $C_{.0}$ of an annulus $A_{.0}$ as above, with the complement $\mathcal{T}_{.0}$ of these balls in $\G_{.1}$ having pseudoconvex boundary. (Note that $\mathcal{T}_{.0}$ is diffeomorphic to $\mathcal{H}^-$, but its structure does not fit the definition of a tower. Each $\mathcal{T}_{.0\dots 0}$ will be obtained from some $\mathcal{H}_{\sigma_i}^-$ by adding thickened annuli.)  By construction, $\partial\G_{.1}$ intersects $\mathcal{H}^--R$ in a level set of $\psi$, which we reindex to be $\partial \mathcal{H}_{.1}$. Isotope $g$ in $R$ and extend it as in Proposition~\ref{fh1}(c) so that $g$ maps $\partial \mathcal{H}_{.1}$ homeomorphically to $\partial \bar\G_{.1}$. Then extend $g$ over $C_{.0}$ as a mapping cylinder. At the $k=1$ step, we construct $\G_{.01}$ and $\G_{.21}$. For the former, we apply the thin procedure inside $\G_{.1}$, to extend a thinned version $W$ of $\mathcal{T}_{.0}$ to $\mathcal{T}_{.00}$, extending the annulus $A_{.0}$ to a new annulus $A_{.00}$ with neighborhood $C_{.00}$, and extending $g$ over $C_{.00}\cup\partial\bar\G_{.01}$ (after isotopy on $R$ rel $\partial \mathcal{H}_{.1}$). For $\G_{.21}$, we apply the basic procedure with $Z=\bar\G_{.1}$, enclosing it in a subtower $\mathcal{T}_{.2}$ of $\G$ that we thin slightly and extend to $\G_{.21}$ (Figure~\ref{tower}). The construction uses a collar of $\partial\mathcal{T}_{.2}$ with pseudoholomorphic levels agreeing with levels of $\psi$ on $\mathcal{H}^-$. Truncate this to a collar of $\partial\G_1\cap\mathcal{T}_{.2}$ as in the figure and let $C_{.2}$ denote the part outside $\mathcal{H}^-$. Extend $g$ over $C_{.2}$ and $\partial\bar\G_{.21}$ by preserving levels. We now have four nested generalized Casson handlebodies $\G_{\tau\bar{2}}$ where $\tau$ has two digits. For $k=2$, we construct another four Stein Freedman handlebodies $\G_{\tau1}$ inside these, using the thin variation when $\tau=.00$ and the basic procedure otherwise. In each latter case, we take $Z$ to be the largest previously constructed $\bar\G_\sigma$ inside $\G_{\tau\bar{2}}$, so that we obtain eight disjoint boundaries. We can then extend our previous regions $C_\tau$ and the map $g$ as before. (This bifurcates each collar, leaving a middle region of truncated levels, as indicated in Figures~\ref{param} and~\ref{image}.) Continuing by induction on $k$, we define $g$ on each level whose index in $\Sigma$ has only finitely many zeroes, and on partial collars of these levels as in Figures~\ref{param} and~\ref{image}. We postpone the proof that $g$ is continuous.

\begin{figure}
\labellist
\small\hair 2pt
\pinlabel $\mathcal{H}^-$ at 3 43
\pinlabel $\bar\G_{.1}$ at 37 9
\pinlabel $\G_1=\G_{.\bar{2}}$ at 110 69
\pinlabel $C_{.2}$ at 44 54
\pinlabel $C_{.22}$ at 111 49
\pinlabel $C_{.20}$ at 75 31
\pinlabel $\mathcal{T}_{.2}$ at 105 32
\pinlabel $\G_{.21}$ at 120 14
\endlabellist
\centering
\includegraphics{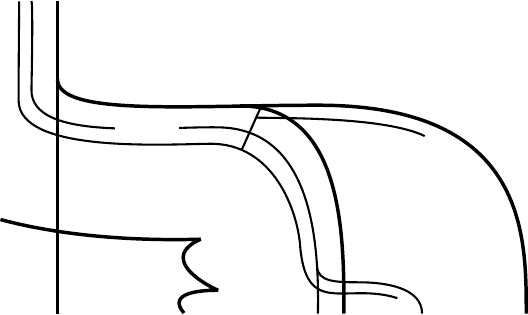}
\caption{Schematic diagram of the tower $\mathcal{T}_{.2}$ thinned and extended to $\G_{.21}$. The other towers with $\tau\ne.0\dots0$ follow the same pattern.}
\label{tower}
\end{figure}

Having defined $g$ on levels corresponding to a dense subset of $\Sigma$, we consider the remaining values $\sigma\in\Sigma$, whose ternary expansions have infinitely many zeroes. (The embedding $\Sigma\subset I$ determining these levels is uniquely determined: We have already specified the lower endpoint $\tau1$ of each middle ``third'', and each upper endpoint $\tau2$ is the limit of a bounded, decreasing sequence of these.) For each such value $\sigma\in\Sigma-\{0\}$, $g$ has already been defined on most of the level. (See Figure~\ref{param}). For each truncation $\tau$ of $\sigma$, the level has been extended as part of the boundary collar of $\mathcal{T}_\tau$, so the image of the level is the boundary of a Stein Freedman handlebody $\G_\sigma$. This is convergent, since the components of the complement of its first $n$ stages  lie in disjoint balls of diameter less than $1/2^n$ once we are beyond the original stages of $\G$. Thus, we can extend $g$ over the entire level, with image $\partial\bar\G_\sigma$. By arranging the towers at each inductive step to have a layer of disks and sufficiently many layers of surfaces (growing exponentially with the length of the ternary expansion of the index), we can guarantee that each such $\G_\sigma$ is a Freedman handlebody (Proposition~\ref{fh1}(a)). Thus, $g$ is a homeomorphic embedding on each level $\sigma\in\Sigma-\{ 0,1\}$ (and for $\sigma=1$ if $\G$ is a Freedman handlebody). Each image is a bicollared topological 3-manifold, by Theorem~\ref{iso} when $\sigma$ has only finitely many zeroes, and by the same method (Proposition~\ref{fh1}(b) as applied at the end of the proof of Theorem~\ref{embFH}, with $T^*_n$ replaced by $\bar\G_{\sigma_i}$ for $\sigma_i\searrow\sigma$) in general. Thus, $\cl G_\sigma=\bar G_\sigma$ is a collared topological 2-handle. At $\sigma=0$, the image of $g$ on its current domain intersects each $G$ of $\G-\mathcal{H}^-$ in the union $A$ of the annuli $A_{.0\dots 0}$. We can assume the balls $B_{.0\dots0}$ arising in the thin procedure intersect in a single point $p\in G$ disjoint from $A$. Then $\cl A=A\cup\{ p\}$ is a topological disk extending to an open 2-cell in $\G$ that is smooth and totally real except at the point $p$ (where it is typically unsmoothable). For each 2-handle $H$ of $\mathcal{H}-\mathcal{H}^-$, we can assume the points at each level $\sigma\in\Sigma-\{0\}$ mapping to $\bar \G_\sigma-\G_\sigma$ (comprising a Bing--Whitehead compactum) become arbitrarily close to the belt circle of the 2-handle $H_\sigma$ as $\sigma$ approaches 0. Then $g$ is defined on the entire core disk of $H$ except for its center point. We send each such point to the corresponding $p$, so that $g$ maps the core homeomorphically to $\cl A$, smoothly except at the center point. Now $g$ is defined on $K$ and has the required properties there. (Freedman's original paper \cite{F} constructs an almost-smooth core disk for a Casson handle that has already been proven homeomorphic to an open 2-handle, by first shrinking his ``central gap" and corresponding hole. In the above description, these come preshrunk. This is necessary since Freedman's shrink would destroy any prearranged pseudoconvexity.)

We complete the definition of $g$ by extending it across the remaining {\em holes}, a countable collection of subsets of $\mathcal{H}$ homeomorphic to $S^1\times D^3$, each exhibited as the product of an interval $[\tau1,\tau2]$ with a solid torus in $\partial \mathcal{H}$. (See Figure~\ref{param}). The image of $g$ currently covers all of $\bar\G$ except for a collection of {\em gaps} in bijective correspondence with the holes. Each gap $\Gamma$ is bounded by the image of the boundary of the corresponding hole, so $\partial \Gamma$ is identified with $S^1\times S^2$ and is bicollared in $\G$ (since the images of the relevant levels of $\mathcal{H}$ are bicollared, respecting the given product structure near the rest of $\partial \Gamma$). In particular, $\Gamma$ is a topological manifold. To understand the gaps more deeply, first set $\tau=.0\dots0$. Each handle of $\G-\mathcal{H}^-$ contains a single gap $\Gamma$ with this $\tau$, whose boundary lies in $\partial\bar\G_{\tau1}\cup\partial C_\tau\cup\partial\bar\G_{\tau2}$ (where $C_.=\mathcal{H}^-$). By construction, $\bar\G_{\tau1}$ lies inside a subtower $\mathcal{T}_{\tau2}$ of $\G_{\tau\bar2}$ that is (slightly thinned) also a subtower of $\G_{\tau2}$ (Figure~\ref{tower}). Thus, $\pi_1(\Gamma)\cong\pi_1(\G_{\tau2}-\bar\G_{\tau1}-C_\tau)\cong\pi_1(\G_{\tau\bar2}-\bar\G_{\tau1}-C_\tau)\cong\Z$. (The last isomorphism is from our initial discussion of the thin procedure.) For $\tau \ne .0\dots0$,  each handle of $\G-\mathcal{H}^-$ contains a gap for each $G^*$ in the basic procedure, but a similar argument shows each has $\pi_1\cong\Z$. In each case, ${\rm H}_i(\Gamma)$ vanishes for $i\ge 2$. (For example, apply Mayer--Vietoris in the contractible ambient generalized Casson handle of $\G$, and note that the $S^2$ factor of $\partial\Gamma\approx S^1\times S^2$ pairs nontrivially with a relative class in the complement of $\Gamma$ given by a Seifert surface for a Whitehead or Bing torus in a level in the current domain of $g$.) Thus, $\Gamma$ is homotopy equivalent to a circle  by \cite[Proposition~11.6C(1)]{FQ}. Now Freedman's work allows us to apply surgery and the s-Cobordism Theorem with fundamental group $\zed$, so a standard argument shows that $\Gamma$ is homeomorphic rel boundary to $S^1\times D^3$. Now we can extend $g$ over each hole. We obtain a well-defined surjection $g\co \mathcal{H}\to\bar\G$  that is injective except for the obvious collapsing from boundaries of 2-handles of $\mathcal{H}$ to the compactifications of generalized Casson handles that are not Freedman handles.

It now suffices to check that $g$ is continuous everywhere, for then it is a closed map by compactness of finite subhandlebodies of $\mathcal{H}$, and hence a quotient map. Each point of $\bar\G$ that is neither the compactification point of some $A$ nor in some $\bar \G_\sigma -\G_\sigma$ with $\sigma\in\Sigma-\{0\}$ has a neighborhood intersecting at most one gap, and so continuity at its preimage point follows immediately.  Each point $p$ in some $\bar \G_\sigma -\G_\sigma$ for which the expansion of $\sigma$ has infinitely many of both zeroes and twos has a system of neighborhoods, each given by a component of the interior of $\G_{\tau\bar2}-\G_\tau-C_\tau$, where $\tau$ is a truncation of $\sigma$. (See Figure~\ref{param}.) Each such neighborhood is the image of a neighborhood in $\mathcal{H}$ (that is a product in the mapping cylinder structure), proving continuity at $g^{-1}(p)$. If $\sigma$ has only finitely many zeroes or twos, then each point $q\in\bar \G_\sigma -\G_\sigma$  lies on the boundary of a gap (or on $\partial\bar{\G}$ if $\sigma=1$). In this case, a similar argument proves continuity at $g^{-1}(q)$ for the restriction of $g$ to the complement of the corresponding open hole, and the extension over the hole is continuous by construction. Finally, a similar argument proves continuity at the preimage of the compactification point of each $A$, so $g$ is continuous everywhere.
\end{proof}

The proof still works if we delete all references to Stein structures and pseudoconvexity. Applying it with $\mathcal{H}^-=S^1\times D^3$ and $\mathcal{H}$ obtained by adding a single canceling 2-handle, we obtain:

\begin{cor} \label{homeo}
Every closed generalized Casson handle has endpoint compactification homeomorphic to a 2-handle modulo a toroidal decomposition in its coattaching region. In particular, every open generalized Casson handle is homeomorphic to an open 2-handle $D^2\times\inter D^2$, by a homeomorphism that is smooth near the boundary and in a neighborhood of $(D^2-\{0\})\times \{0\}$. \qed
\end{cor}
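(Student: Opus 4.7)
The plan is to deduce the corollary by feeding the special handle pair $(\mathcal{H},\mathcal{H}^-)=(D^4, S^1\times D^3)$ into Theorem~\ref{onion}, used in its non-analytic form (which is granted by the sentence ``The proof still works if we delete all references to Stein structures and pseudoconvexity'' immediately preceding the corollary). Specifically, I take $\mathcal{H}^-=S^1\times D^3$ equipped with the radial mapping cylinder structure, and I take $\mathcal{H}$ to be obtained by attaching a single canceling 2-handle $h\cong D^2\times D^2$, so that $\mathcal{H}\cong D^4$. Extend the mapping cylinder structure radially over $h$ so that its core is an arc $K$ meeting $\partial\mathcal{H}^-$ smoothly in the attaching circle of $h$. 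Given the arbitrary closed generalized Casson handle $G$, form the generalized Casson handlebody $\mathcal{G}=\mathcal{H}^-\cup G$ modeled on $(\mathcal{H},\mathcal{H}^-)$, whose handle-compactification is $\bar{\mathcal{G}}=\mathcal{H}^-\cup\bar G$.

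The theorem supplies a quotient map $g\co\mathcal{H}\to\bar{\mathcal{G}}$ that is a homeomorphism except for collapsing on $\partial h$, which occurs precisely when $G$ is not a Freedman handle. Since $g$ is the identity on $\mathcal{H}^-$, it is already a diffeomorphism on the attaching region $\partial_-h\subset\partial\mathcal{H}^-$; hence every identification made by $g$ is confined to the coattaching region $\partial_+h\cong D^2\times S^1$. The precise form of this identification was computed in the proof of Proposition~\ref{fh1}(c): it is exactly the Bing--Whitehead toroidal decomposition of $D^2\times S^1$ defined by the iterated doubling procedure associated to the tower $G$. This yields the first assertion, that $\bar G$ is homeomorphic to $h=D^2\times D^2$ modulo a toroidal decomposition supported in $D^2\times\partial D^2$.

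For the open case, observe that $G^o=\bar G-\partial_+\bar G$ and $h^o=D^2\times\operatorname{int}D^2=h-\partial_+h$; since all collapsing of $g$ happens in $\partial_+h$, the restriction $g|_{h^o}\co D^2\times\operatorname{int}D^2\to G^o$ is a homeomorphism. For the smoothness claims, note that $g$ is the identity on $\mathcal{H}^-$, hence is smooth in a neighborhood of $\partial_-h^o=\partial D^2\times\operatorname{int}D^2$; and conclusion~(b) of Theorem~\ref{onion}, dropping the totally real hypothesis, says $g$ is a diffeomorphism on $\operatorname{int}D^2\times\{0\}$ minus one point, which may be chosen to be the origin. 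Combining these two open diffeomorphism domains gives smoothness on a neighborhood of $(D^2-\{0\})\times\{0\}$ in $h^o$, completing the proof. I do not anticipate any serious obstacle, since Theorem~\ref{onion} does all the work; the only points to verify are that this very simple $(\mathcal{H},\mathcal{H}^-)$ actually satisfies the standing hypotheses of that theorem (it does, trivially, once one disregards the Stein assumptions), and that the quotient on $\partial_+h$ is the expected toroidal decomposition, which is precisely what Proposition~\ref{fh1}(c) provides.
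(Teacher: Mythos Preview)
Your proposal is correct and follows exactly the paper's own argument, which consists of the single sentence ``Applying it with $\mathcal{H}^-=S^1\times D^3$ and $\mathcal{H}$ obtained by adding a single canceling 2-handle'' preceding the corollary. One small slip: the core $K$ restricted to $h$ is a 2-disk, not an arc (as you yourself recognize later when invoking $\inter D^2\times\{0\}$).
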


\noindent This corollary was essentially known to Freedman in the 1980s. The above proof (without the Stein structures) is essentially his. Freedman's original proof that a Casson handle is homeomorphic to an open 2-handle first constructed a partial parametrization, then relied on intricate decomposition space arguments, shrinking modified gaps and holes to points to obtain a pair of homeomorphic spaces, and proving that the resulting quotient maps were both approximable by homeomorphisms. Fortunately for us, Freedman's Theorem has already been proved, allowing us to avoid decomposition theory by the suggestion from ~\cite{DF} of seeing directly that the gaps and holes are homeomorphic to each other.

To extend Theorem~\ref{onion} to infinite 2-handlebodies, we introduce another definition.

\begin{de}
Let $\mathcal{H}$ be a relative Stein handlebody. A {\em pseudoconvex collar system} for $\mathcal{H}$ is a proper embedding $\psi_n\co[0,1]\times \partial\mathcal{H}_n\to\mathcal{H}_n$ for each $n$, with $\psi_n(1,x)=x$ for each $x\in\partial\mathcal{H}_n$, and $\psi_m(t,x)=\psi_n(t,x)$ for each $x\in\partial\mathcal{H}_m\cap\partial\mathcal{H}_n$ and $t\in[0,1]$, such that for each $t\in[0,1]$
\item[a)] the level $\psi_n(\{t\}\times\partial \mathcal{H}_n)$ is (strictly) pseudoconvex (in the boundary orientation),
\item[b)] the obvious projection $\psi_n(\{t\}\times\partial \mathcal{H}_n)\to \partial\mathcal{H}_n$ is a contactomorphism, and
\item[c)] removing $\psi_n((t,1]\times\partial\mathcal{H}_n)$ from each $\mathcal{H}_n$ introduces no new boundary intersections.
\end{de}

\noindent The operation in (c) produces thinner versions of the subhandlebodies $\mathcal{H}_n$, with their boundaries required to intersect in the same pattern as in the original handlebody. (For example, a point  $\psi_2(t,x)$ with $x\in\partial\mathcal{H}_2-\mathcal{H}_1$ is not allowed to lie in $\psi_1(\{t\}\times\partial \mathcal{H}_1)$.) This results in a thinner version of $\mathcal{H}$ with a diffeomorphic Stein handlebody structure. (Note that the contact topology of the 2-handle attaching regions, required in Definition~\ref{SHB}(c) of a Stein handlebody, is preserved by (b) above). For $t>0$ the operation in (c), followed by rescaling the interval $[0,t]$, returns a pseudoconvex collar system, as does similarly removing an inner layer from the collars. A finite Stein handlebody always admits a pseudoconvex collar system, since $\bigcup \partial\mathcal{H}_n$ is compact, pseudoconvexity is an open condition, and Gray's Theorem supplies the contactomorphisms. It is not clear to the author whether this remains true for infinite handlebodies (with preassigned Stein structures). However, if a general relative Stein handlebody does admit such a system and we add another layer of handles by Eliashberg's method, \cite[Proposition~10.10]{CE} allows us to extend the system over the new 2-handlebody. (See the beginning of Section~\ref{Iso} above.) Since Theorem~\ref{main} smoothly isotopes 2-handlebodies to be Stein using Eliashberg's method (up to $C^r$-small perturbations), we can assume in that theorem that a given proper collar of $\partial W$ satisfying (a,b) (if one exists) extends to a pseudoconvex collar system of the resulting embedded relative 2-handlebody. The same then applies to the Stein handlebody realization of the Freedman handlebody made by topological isotopy in Theorem~\ref{iso}.

\begin{thm}\label{onion2}
Theorem~\ref{onion} remains true with $\mathcal{H}$ and $\mathcal{H}^-$ allowed to be infinite, up to smooth ambient isotopy (rel $\partial\mathcal{H}$, fixing $K$ setwise and sending each subhandlebody into itself) provided that $\G$ has a Stein handlebody realization with a pseudoconvex collar system for which the image of $\psi_2$ is disjoint from $K$, and some sublevel set $\psi^-(\mathcal{H}^-_\sigma)$ and its intersection with $K$ arise by Eliashberg's method.
\end{thm}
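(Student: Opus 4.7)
The plan is to run the proof of Theorem~\ref{onion} verbatim, using the hypothesized pseudoconvex collar system $\{\psi_n\}$ in place of the ad hoc pseudoconvex boundary collars that were produced in the finite case by compactness of $\partial\mathcal{T}$. The only places where finiteness of $\mathcal{H}$ was genuinely used are the construction of a pseudoconvex collar of the boundary of each subtower $\mathcal{T}\subset\G$ arising in the basic and thin procedures, and the requirement that $g$ agree with the identity on all of $\mathcal{H}^--R$. The collar system supplies the required collars, and the latter requirement is relaxed to a smooth ambient isotopy.

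At each inductive substep, the subtower $\mathcal{T}$ is a union of $\mathcal{H}^-$ with finite initial segments of the generalized Casson handles of $\G$, so the $\psi_n$ restrict coherently, by the compatibility condition in the definition, to a proper pseudoconvex collar of $\partial\mathcal{T}$ with contactomorphic levels. This collar can be truncated to provide the thinning required by the basic procedure, using the observation from the paragraph after the definition that truncation and removal of inner layers return a collar system of the same type. The condition that $\psi^-(\mathcal{H}^-_\sigma)$ and $K\cap\psi^-(\mathcal{H}^-_\sigma)$ arise by Eliashberg's method lets the thin procedure take $W$ to be this Stein sublevel of $\mathcal{H}^-$, so that Addendum~\ref{isoAdd}(b) and Theorem~\ref{main} apply as before; the disjointness of $\psi_2$ from $K$ guarantees that the resulting collars do not interfere with the prescribed mapping cylinder structure on $K$. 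With these replacements, the basic and thin procedures produce the same output at every substep, so the construction of $g$ on the levels $\partial\bar\G_\sigma$, collars $C_\tau$, and gaps proceeds exactly as before. Levels $\sigma\in\Sigma-\{0\}$ whose ternary expansions have infinitely many zeros are handled by the same limiting argument as in the finite case, using the variant of Proposition~\ref{fh1}(b) invoked at the end of the proof of Theorem~\ref{embFH}. Continuity of $g$ and the identification of each gap with $S^1\times D^3$ (via Mayer--Vietoris and surgery with $\pi_1\cong\zed$) are local properties and go through unchanged.

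I expect the main obstacle to be purely technical: keeping the pseudoconvex collar system coherent across the countably many nested thinnings of subtowers required by the partial parametrization. This should reduce to iterated application of the observations that truncating, restricting to a finite subhandlebody, and extending over new Eliashberg handles all produce a collar system of the same type; the final $\G$ is an increasing union of such systems, and the limit supplies the collar used for the levels $\partial\bar\G_\sigma$ with $\sigma$ having infinitely many zeroes in its expansion. The smooth-ambient-isotopy qualifier then accommodates the possibility that the Stein handlebody realization produced by the construction differs from the preassigned one by a smooth isotopy of Stein structures; such an isotopy is automatically the identity on $\partial\mathcal{H}$ (where both realizations agree along the boundary collar), fixes $K$ setwise (both realizations make the cells of $K$ totally real), and sends each subhandlebody into itself.
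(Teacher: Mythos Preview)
Your outline captures the right idea---replace the ad hoc collars with the hypothesized pseudoconvex collar system---but it misses two concrete difficulties that the paper's proof addresses, and it misidentifies the source of the smooth-ambient-isotopy qualifier.

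First, in the basic procedure of Theorem~\ref{onion}, the closed subset $Z$ (e.g.\ $\bar\G_{.1}$ or later $\bar\G_{\tau0\bar2}$) was placed in a thinned subtower $\mathcal{T}'$ using compactness: $Z\cap G$ compact implies some finite subtower contains it away from $\partial_+$, and then a small collar separates $Z$ from $\partial\mathcal{T}$. When $\mathcal{H}$ is infinite, $Z$ is generally noncompact, so no fixed collar width works uniformly across all the generalized Casson handles. The paper handles this by a smooth ambient isotopy of $Z$ inside $\mathcal{T}$ (rel $\partial\mathcal{T}$) pushing it into $\mathcal{T}'$. This is one actual source of the ``up to smooth ambient isotopy'' in the statement---not, as you suggest, a discrepancy between two Stein handlebody realizations.

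Second, and more substantially, the partial parametrization requires the boundaries $\partial\bar\G_\sigma$ to be pairwise disjoint. In the finite case this is arranged automatically by working inside pseudoconvex collars of compact boundaries. In the infinite case the topological 2-handles $h_{G^*}$ attached to $\mathcal{T}'$ in constructing $\bar\G_{\tau1}$ may meet the collar that will later be used for $\mathcal{T}_{\tau2}$. The paper therefore reorders the construction: before completing $\bar\G_{\tau1}$, it jumps ahead to locate $\mathcal{T}_{\tau2}$ with collar disjoint from $\mathcal{T}_\tau$, then uses that collar structure to push the handles $h$ off. Only then is $\bar\G_{\tau1}$ built, guaranteed disjoint from the collar, so that every subsequent $\G_\sigma$ with $\sigma>\tau1$ either uses that same collar or contains $\mathcal{T}_{\tau2}$. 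Your proposal asserts the procedures ``produce the same output at every substep'' and treats coherence of collars as a bookkeeping issue; it is not---without this reordering you cannot guarantee the nested levels are disjoint, and the map $g$ would not be well-defined on the Cantor-set levels with infinitely many zeroes.
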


\begin{proof}
Since $\psi^-(\mathcal{H}^-_\sigma)$ is constructed from $K$ by Eliashberg's method, we can thin it by a smooth isotopy (fixing $K$ setwise) to be disjoint from the image of $\psi_2$. It then has its own pseudoconvex collar system suitably disjoint from $K$, allowing iteration. Since the thin procedure for constructing innermost Freedman handlebodies $\G_{.0\dots01}$ ultimately relies on Theorem~\ref{main}, it automatically gives each a pseudoconvex collar system whose $n=2$ collars can be assumed disjoint from each other and from $K$. We can assume the levels of the collars of each $\partial\G_{.0\dots01}$ (including $\G$ itself) agree on $\mathcal{H}^--R$ with those of $\psi$, after an isotopy of the latter rel $K$. We construct the other Freedman handlebodies as in the previous proof. By (c) above, the parallel subtower $\mathcal{T}'$ in the basic procedure of the second paragraph inherits a pseudoconvex collar system from that of $\G$. The resulting Stein Freedman handlebody $\G'$ then inherits a pseudoconvex collar system via Theorem~\ref{iso}. However, since $Z$ need not be compact, arranging it to lie in $\mathcal{T}'$ may require a smooth ambient isotopy of $Z$ in $\mathcal{T}$ (rel $\partial\mathcal{T}$). To preserve disjointness of the levels when we construct each $\bar\G_{\tau1}$, we start from the thinned version $\mathcal{T}'$ of $\mathcal{T}_{\tau}$ with topological handles $h$ attached to form $\mathcal{H}'$ as in the basic construction (Figures~\ref{procedures}(a) and \ref{tower} (the latter for $\tau=.2$)), then jump ahead to find $\mathcal{T}_{\tau2}$ containing this with collar disjoint from  $\mathcal{T}_{\tau}$, and use the collar structure to smoothly push the handles $h$ off of the collar. We then obtain $\bar\G_{\tau1}$ as before, disjoint from the collar. (Every subsequent $\G_\sigma$ with $\sigma>\tau 1$ will be built from this same collar or contain $\mathcal{T}_{\tau2}$.) We take the pseudoconvex collar system of $\bar\G_{\tau1}$ to extend a segment of the previous system as in Figure~\ref{param}, bifurcating the collars as before. Iterating as in the proof of Theorem~\ref{onion} constructs $g$ on all levels in $\sigma$ with only finitely many zeroes, and the rest of the proof applies without change.
\end{proof}

\begin{proof}[Proof of Theorem~\ref{onion1}]
Since we can assume the output of Theorem~\ref{iso} has the required structure, the proof of the finite case (following Remark~\ref{sig0}) still works, where the isotopy of $f$ is ambient by the method of Theorem~\ref{iso}.
\end{proof}

The following addendum to Theorem~\ref{onion1} yields universal diffeomorphism types of Stein onions.

\begin{adden}\label{univ} For the family of embeddings $f\co V\to Y$ in Corollary~\ref{diff}, there are corresponding topological embeddings $\hat{f}$ that are smooth off $K$ and smoothable with support in an arbitrarily small neighborhood of $K$, such that each $\hat{f}$ preserves the levels of the Stein onion structure. In particular, the Freedman handlebodies $\G_\sigma$ and $\hat{f}(\G_\sigma)$ are diffeomorphic for each fixed $\sigma\in\Sigma-\{0\}$.
\end{adden}
 
\begin{proof}
We follow the proof of  Corollary~\ref{diff}, applying Theorem~\ref{iso} to $\mathcal{H}\subset X$. By Addendum~\ref{isoAdd}(b), we can assume each Freedman handle $G$ of the resulting $\G$ lies in a topological ball except for a neighborhood of a totally real annulus $A$, and that the ball lies in a holomorphic ball $B_G$ inside $\mathcal{H}$. After a smooth isotopy, we can assume $f$ is holomorphic on each $B_G$. By the rest of the proof of Corollary~\ref{diff}, we can make the first stage cores $F$ of $f(\G)$ totally real by an isotopy fixing each $B_G$ but modifying the attaching circle of $G$ along zig-zags that extend radially to the 0-skeleton of $K$. We can then arrange $f$ to be a homeomorphism sending $K\cap\mathcal{H}^-$ to a totally real complex. Subdivide each $F$ so that $\partial A$ lies in the 1-skeleton. We can then apply Theorem~\ref{onion} to the resulting subhandlebody in each $B_G$ separately. Since $f|B_G$ is holomorphic, it preserves the resulting Stein onion structure. We can extend these structures over neighborhoods of $K$ and $f(K)$ by Eliashberg's method, successively constructing Stein neighborhoods $\G_{.0\dots01}$ and extending the given pseudoconvex collars. Then $f$ can be isotoped to preserve levels as required.
\end{proof}

\begin{Remark}\label{onionDiff}
This trick avoids difficulties that would arise from trying to directly apply a rel $W$ version of Corollary~\ref{diff} to the proof of Theorem~\ref{onion}. For example, suppose some higher stage component of a Freedman handlebody $\G$ is a smooth 2-handle $h$. After we make the lower stages Stein, we can change the ambient complex structure on $h$ so that $c(h)$ is arbitrarily large, without changing the homotopy class of $J$ on $\G$. We will not be able to replace $h$ by a Stein Freedman handle that is independent of $c(h)$ without modifying the previously constructed Stein structure.
\end{Remark}

%%%%%%%%%%%%%%%%555555555555555555555555555555555555555%%%%%%%%%%%%%%%%%%

\section{Distinguishing smooth structures} \label{Exotic}

We next show that the Freedman handlebody interiors constructed in the previous sections can often be chosen from among a multitude of diffeomorphism types. We do this by adapting \cite{MinGen}, which intensively studied the diffeomorphism types realizable by Stein Casson handlebody interiors modeled on a fixed 2-handlebody. Theorem~\ref{onion} or \ref{onion2} immediately applies to any of these diffeomorphism types, but we can say much more about how the diffeomorphism types vary within the resulting Stein onions. We will start with neighborhoods of surfaces, and then address the general case.

In \cite{MinGen}, constraints on genera of embedded surfaces were studied. In every 4-manifold $X$, every $\alpha\in{\rm H}_2(X)$ is represented by a closed, oriented surface. The {\em genus function} $G\co {\rm H}_2(X)\to\Z^{\ge0}$ assigns to each $\alpha$ the minimal genus of such a surface. It has an analogue $G^{\rm TOP}$ for surfaces topologically embedded with disk bundle neighborhoods, whose values are often strictly smaller than those of $G$. If $X$ is complex, set $G^\C(\alpha)=(\alpha\cdot\alpha+|\langle c_1(X),\alpha\rangle |)/2+1$. The adjunction inequality from gauge theory implies that whenever a surface representing $\alpha$ lies in a Stein open subset $V\subset X$, its genus must be at least $G^\C(\alpha)$, unless it is a sphere that is nullhomotopic in $V$.

\begin{thm}\label{surface}
Let $X$ be a complex surface. Then every $\alpha\in{\rm H}_2(X)$ is represented by a topological embedding of a disk bundle over a surface of any fixed genus $g_0\ge G^{\rm TOP}(\alpha)$, whose image is a Stein onion on the obvious mapping cylinder. When $g_0\ge G(\alpha)$, this can be arranged so that the Stein neighborhoods $\inter\G_\sigma$ for $\sigma\in\Sigma-\{0\}$ realize all values $g\ge\max(g_0,G^\C(\alpha))$ for the minimal genus of the generator of ${\rm H}_2(\inter\G_\sigma)$. Alternatively, we can realize all such values up to a preassigned $N\ge\max(g_0,G^\C(\alpha))$ that is not exceeded. When $G^{\rm TOP}(\alpha)\le g_0< G(\alpha)$, we can still realize arbitrarily large values of $g$, bounded or unbounded.
\end{thm}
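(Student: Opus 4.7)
The plan is to begin by choosing, for the given $\alpha \in {\rm H}_2(X)$ and $g_0 \geq G^{\rm TOP}(\alpha)$, a topologically embedded closed surface $F$ of genus $g_0$ representing $\alpha$, together with a topological disk bundle tubular neighborhood $\mathcal{H}$; this exists by definition of $G^{\rm TOP}$, and can be taken smooth when $g_0 \geq G(\alpha)$. The neighborhood $\mathcal{H}$ is a finite 2-handlebody (with one 0-handle, $2g_0$ 1-handles and one 2-handle), carrying the obvious mapping cylinder structure $\psi\co[0,1]\times\partial\mathcal{H}\to\mathcal{H}$ collapsing to the zero section $F=K$. Apply Theorem~\ref{onion1} to isotope $\mathcal{H}$ into a Stein onion on $\psi$, producing the uncountable nested family $\G_\sigma$ for $\sigma\in\Sigma-\{0\}$ of Stein Freedman handlebodies, each with $F$ (possibly with a single non-smooth point on its top 2-cell) as topological core and ${\rm H}_2(\inter\G_\sigma)\cong\Z$ generated by $[F]$.

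Next I would set up the control on the minimal smooth genus $g(\sigma)$ of the generator in $\inter\G_\sigma$. The adjunction inequality for Stein surfaces, applied to the inherited Stein structure on $\inter\G_\sigma$, gives $g(\sigma)\geq G^\C(\alpha)$ unconditionally. When $g_0\geq G(\alpha)$, a smoothly embedded representative of $F$ sits inside $\inter\G_\sigma$ and furnishes an upper bound $g(\sigma)\leq g_0$; but smaller smooth representatives of $[F]$ in $\inter\G_\sigma$ can be obstructed, as in \cite{MinGen}, by the combinatorial construction of the Freedman handle enclosing $F$, whose embedded-surface stages force the adjunction bound to sharpen. To hit each target value $g\geq\max(g_0,G^\C(\alpha))$, I would exploit the flexibility of the onion construction (Theorem~\ref{onion} together with Remark~\ref{isoRem}(b), Addendum~\ref{AnnGCHAdd} and the techniques of \cite{MinGen}): at each inductive stage indexed by the ternary parameter $\tau$, choose the combinatorics of the Freedman handle inserted between $\bar\G_{\tau 1}$ and $\bar\G_{\tau 2}$ so that its interior forces minimal genus exactly $g$ for some preassigned target. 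Since $\Sigma-\{0\}$ is uncountable while the set of integer targets is countable, each value $g\geq\max(g_0,G^\C(\alpha))$ can be realized for uncountably many $\sigma$. For the bounded alternative, restrict the combinatorial choices to those producing minimal genus $\leq N$. For the case $g_0<G(\alpha)$, no smooth $F$ is available, but the topological embedding still yields a Stein onion, and the same combinatorial refinements force $g(\sigma)$ to take arbitrarily large values, bounded by a preassigned $N$ or unbounded as desired.

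The main obstacle is transferring the minimum-genus realization results of \cite{MinGen}, which were stated for Casson handles, to the Freedman handle setting used here. The key inputs from \cite{MinGen} are an upper-bound construction producing a smooth surface of the prescribed genus representing the generator, built from core surfaces in a suitable tower layer of the Casson handle, and a lower-bound obstruction coming from the adjunction inequality combined with the fact that a Stein structure can be chosen with prescribed relative Chern class on the handlebody. For the upper bound, the relevant surfaces are contained in a finite subtower, which is a genuine smooth handlebody inside the Freedman handle, so the Freedman compactification does not obstruct them and the same explicit construction transfers. For the lower bound, Addendum~\ref{AnnGCHAdd} furnishes exactly the freedom to choose Stein structures over Freedman handles with prescribed relative Chern numbers subject to the inequality on $g+k_+-k_-$; increasing this quantity sufficiently, as allowed by Remark~\ref{isoRem}(b), supplies the required flexibility inside each $\inter\G_\sigma$. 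Granting this transfer, the remaining patching in the onion construction reduces to the independent combinatorial choices described above, completed by the continuity argument from the proof of Theorem~\ref{onion}.
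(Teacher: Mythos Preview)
Your proposal has a conceptual gap that inverts the required direction. You assert that when $g_0\ge G(\alpha)$ the smooth $F$ sits inside each $\inter\G_\sigma$, giving $g(\sigma)\le g_0$, and then speak of obstructing ``smaller'' representatives. But the theorem asks you to realize every $g\ge\max(g_0,G^\C(\alpha))$: you must push the minimal genus \emph{above} $g_0$, not merely prevent it from falling below. If your upper bound $g(\sigma)\le g_0$ held for all $\sigma$, the theorem would fail. Moreover, after the topological isotopy of Theorem~\ref{onion1} the core acquires a nonsmooth point on its 2-cell, so you cannot simply assume the original smooth $F$ survives inside $\G_\sigma$. The adjunction inequality in $X$ gives only the fixed bound $G^\C(\alpha)$; nothing in your outline explains how the lower bound is \emph{raised} as $\sigma\to 0$, and ``the embedded-surface stages force the adjunction bound to sharpen'' is not a mechanism.

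The missing idea is to add positive double points to the first-stage immersed core. For a target $g$, the paper (via \cite[Example~6.1(b)]{JSG}) builds a Stein Casson handlebody $\mathcal{C}_g$ whose first-stage disk has $g-g_0$ positive double points; smoothing them yields a smooth genus-$g$ representative (upper bound), while an \emph{auxiliary} smooth embedding of $\mathcal{C}_g$ into a different Stein manifold with $G^\C=g$ certifies the lower bound. To make the genus climb along the onion, one must modify the induction inside the proof of Theorem~\ref{onion}: at each innermost step $\tau=.0\dots0$, refine the cell decomposition to insert a canceling 0-- and 1--handle pair inside the target ball, add one more positive double point to the core surface with compensating zig-zags so it remains totally real outside the new 0-handle, and refine so that $\G_{\tau1}$ embeds in the model $\mathcal{C}_{g+1}$. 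Stabilization at a preassigned $N$ is achieved by taking $P$ in Theorem~\ref{onion} to be the relevant first stage, so an isotopic copy persists in every deeper level. None of these ingredients---the double-point mechanism for raising genus, the auxiliary-embedding lower bound, or the explicit modification of the onion induction---appear in your plan; invoking ``combinatorics of the Freedman handle inserted between $\bar\G_{\tau1}$ and $\bar\G_{\tau2}$'' does not address the first-stage core, which is what controls the minimal genus.
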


\noindent Thus, for fixed $\alpha$ and $g_0$, we realize infinitely many diffeomorphism types of Stein smoothings of the given bundle interior, topologically ambiently isotopic in $X$ but distinguished by the genus function. For each case with $g_0\ge G(\alpha)$, the family of diffeomorphism types can be chosen universally as in Addendum~\ref{univ} for any given bound on $|\langle c_1(X),\alpha\rangle|$. In general, the proof preserves the topological isotopy class of a preassigned bundle, so we could apply this to knotted spheres in $\C^2$, for example. According to \cite[Section~6]{MinGen}, we can interpret the integer $N$ (or infinity in the unbounded case) as the minimal genus of the generator for the almost-smooth core, distinguishing the ``myopic equivalence" classes of these cores for different values of $N$. As in \cite[Corollary~8.5]{MinGen}, counting double points of each sign, rather than genus, would allow us to choose the onion in any of $n$ different ways $\G(1),\dots,\G(n)$ ($n\in\Z^+$ arbitrarily large), such that no $\G_\sigma\subset \G(i)$ smoothly embeds in any other $\G(j)$ (generating its homology).

\begin{proof}

When $g_0\ge G(\alpha)$, we can represent $\alpha$ by a smooth surface $F$ of genus $g_0$. For any $g$ as given, \cite[Example~6.1(b)]{JSG} topologically isotopes and thickens $F$ into a Stein Casson handlebody $\mathcal{C}_g\subset X$ using a single Casson handle with $g-g_0$ first stage double points, all positive, with the generator of ${\rm H}_2({\mathcal C}_g)$ having minimal genus $g$. (The idea is to add double points to $F$ and extend to a Stein Casson handlebody $\mathcal{C}_g$, then smoothly embed $\mathcal{C}_g$ in a different Stein manifold in which $G^\C=g$. The case $g_0=0$ is excluded needlessly in \cite{JSG} since the relevant adjunction inequality still holds; cf.\ \cite[proof of Theorem~3.8]{MinGen}.) We can easily upgrade $\mathcal{C}_g$ to a Stein Freedman handlebody $\G\subset\mathcal{C}_g$ with the same first stage (e.g., by Theorem~\ref{iso} with $W$ the first stage). Then $\G$ also has generator of minimal genus $g$. (It cannot be smaller since $\G\subset\mathcal{C}_g$, but there is a genus-$g$ representative made by smoothing double points.) Now we apply Theorem~\ref{onion} with an upgrade in its proof. Before constructing $\G_{.1}$, refine the cell decomposition of the first stage core surface $F'$ of $\G$ so that it has a 0-cell inside the smooth target ball $B$ in which we will place the topological ball $B_{.0}$. This adds a 0-handle $h\subset\inter B$ and canceling 1-handle to $\mathcal{H}^-$. Add a positive double point to $F'$ and compensate with zig-zags on $\partial h$ so that $F'$ remains totally real outside $h$. Then complete the construction of $\G_{.1}$ as before, relative to $h$. The new first stage is totally real outside $h$, so we obtain the totally real annulus $A_{.0}$ as before (disjoint from $h$). After extending the canceling 0-1 pair into the target ball for $\G_{.01}$, we can move the zig-zags into the latter in preparation for adding another double point and pair of zig-zags. When we construct $\G_{.1}$, we refine as necessary so that it admits  a smooth embedding in $\mathcal{C}_{g+1}$ and hence has minimal genus $g+1$. Continuing in this way, we arrange the innermost Freedman handlebodies $\G_{.0\dots 01}$ to have minimal genus increasing by 1 at each step. If we instead wish the minimal genus to stabilize at some $N$, we set the corresponding first stage equal to $P$ in the theorem so that it has an isotopic copy in each subsequent Freedman handlebody. If $g_0<G(\alpha)$, we only have a topological embedding to begin with, so have less precise control over the minimal genus (due to negative double points), but we can still force it to become arbitrarily large using auxiliary embeddings in ${\mathcal C}_n$, or stabilize it as before (with finite, but uncontrollable, upper bound).
\end{proof}

\begin{thm}\label{surfaceUncountable}
Suppose that $\alpha\cdot\alpha\le0$ in  Theorem~\ref{surface}. Then in each case of that theorem, the onion can be constructed so that each of the required values of minimal genus $g$ is realized by uncountably many diffeomorphism types of Stein neighborhoods $\inter\G_\sigma$ for $\sigma\in\Sigma-\{0\}$.
\end{thm}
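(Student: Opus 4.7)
The plan is to bootstrap the Stein onion construction of Theorem~\ref{surface} by fusing it with the uncountable-diffeomorphism-type machinery of \cite{MinGen}. The hypothesis $\alpha\cdot\alpha\le 0$ is the crucial source of flexibility: it allows insertion of pairs of oppositely signed double points (or adjustment of $k_+,k_-$ keeping $k_+-k_-$ fixed) at each stage of any Freedman handle appearing in the construction, without forcing the target minimum genus to rise. This is because once $\alpha\cdot\alpha\le 0$, the adjunction lower bound $G^\C(\alpha)$ is driven by the Chern class alone, so we can vary the underlying Freedman-handle combinatorics without altering the required minimum genus $g$.

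First I would modify the construction in the proof of Theorem~\ref{surface} so that at each inductive substep where an innermost $\G_{.0\cdots 01}$ or an intermediate $\G_{\tau 1}$ is built, there are two combinatorially distinct refinements of the new Freedman handle to choose between. Using Theorem~\ref{AnnGCH} together with Addendum~\ref{AnnGCHAdd}, and the flexibility provided by $\alpha\cdot\alpha\le 0$, both alternatives can be arranged to yield a Stein Freedman subhandlebody whose generator has the same prescribed minimum genus $g$, but which differ at the given depth in the signed tally of double points in one of the new surfaces (for instance, one option inserts an additional pair with $k_+=k_-=1$, while the other leaves the stage unchanged). Because the onion is indexed by the standard Cantor set $\Sigma$, these binary choices at successive substeps propagate into an uncountable family of $\sigma\in\Sigma-\{0\}$ indexed by the associated infinite $\{0,1\}$-sequences.

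Next I would extract a diffeomorphism invariant distinguishing these $\inter\G_\sigma$ from one another, adapting the invariants of \cite{MinGen}. For each $\sigma$, Theorem~\ref{onion2} supplies a canonical cofinal nest of Stein Freedman subhandlebodies exhausting $\inter\G_\sigma$. Applying the adjunction inequality for Stein surfaces to the generator of $H_2$ inside each member of this nest yields a sequence of integers whose tail behavior (up to eventual equality) is a diffeomorphism invariant of $\inter\G_\sigma$, since any self-diffeomorphism can be arranged, via Theorem~\ref{main}, to carry such a nest cofinally into itself. The argument of \cite{MinGen} then shows that distinct binary choice sequences produce inequivalent tails, so uncountably many diffeomorphism types occur among the $\inter\G_\sigma$ with minimum genus exactly $g$. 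This handles each of the three cases of Theorem~\ref{surface} uniformly, since in each the construction follows the same inductive template and differs only in how the target genus $g$ is prescribed.

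The main obstacle is the second step: arranging the two refinement options at each substep so that the resulting Freedman handles are genuinely distinguished by the gauge-theoretic invariant, and doing so compatibly with the pseudoconvex collar systems and the universality statement of Addendum~\ref{univ} needed to carry the Stein onion forward through all later substeps. This is the technical heart of \cite{MinGen}, and the required adjustments are carefully localized to the target balls $B_{.0\cdots 0}$ and the higher Freedman stages attached within them, so that they do not interfere with either the homeomorphism control already established in Theorem~\ref{onion} or with the totally real core produced there. Outside this combinatorial bookkeeping, the construction follows the template of Theorem~\ref{surface} directly.
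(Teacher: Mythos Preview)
Your proposal has a genuine gap: the distinguishing mechanism you sketch does not work, and you have misidentified the role of the hypothesis $\alpha\cdot\alpha\le 0$.

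First, inserting a cancelling pair of double points ($k_+=k_-=1$) at some stage of a Freedman handle does not produce a usable diffeomorphism invariant via adjunction. You have explicitly arranged both refinement options to yield the \emph{same} minimal genus $g$ for the generator, so the adjunction-type integers you propose to extract from the nest of Stein subhandlebodies will agree. More broadly, there is no argument in \cite{MinGen} that distinguishes uncountably many Freedman handlebodies by such binary combinatorial choices in the double-point data; the genus-function arguments there produce only countably many distinctions. Your appeal to ``the argument of \cite{MinGen}'' at this step is not backed by any actual statement in that paper.

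Second, the hypothesis $\alpha\cdot\alpha\le 0$ is not used to permit extra double points. In the paper's proof it is used for a completely different purpose: it guarantees that $\G$ (and each $\G_\sigma$) smoothly embeds in the disk bundle of Euler number $\alpha\cdot\alpha\le 0$, hence in a closed, simply connected, negative definite manifold. That is exactly the hypothesis of Lemma~\ref{uncountableR4} (from \cite{MinGen}), whose input is the DeMichelis--Freedman family $\r$ of exotic $\R^4$'s. The paper's argument attaches a Stein copy of $\r$ to $\G$ by a 1-handle; since $\r\subset\R^4$, this does not change the genus function, but Lemma~\ref{uncountableR4} (ultimately via Taubes' periodic-end gauge theory) then forces each diffeomorphism type to occur only countably often among the resulting $\G'_\sigma$. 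To propagate this to the deeper levels with larger prescribed genus, the paper inserts further copies of $\r$ into the middle-third gaps $(\tau1,\tau2)$ of the Cantor-set parametrization, which lie outside $\Sigma$ and so carry no pseudoconvexity requirement. None of this machinery---the family $\r$, the end-sum, the negative-definite embedding, or Lemma~\ref{uncountableR4}---appears in your proposal, and without it there is no source for uncountably many diffeomorphism types at a fixed value of $g$.
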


 The proof of this theorem relies on a family of exotic $\R^4$ homeomorphs constructed by DeMichelis and Freedman in \cite{DF}. These were realized as Freedman handlebody interiors modeled (after simplification \cite{BG}) on a handle decomposition of the 4-ball with two 1-handles and two 2-handles, one of which becomes a Freedman handle. In particular, they smoothly embed in $\C^2$. It was shown in \cite{Ann} that the Freedman handlebodies are Stein. (Actually, Casson handles were used there, but we can now return to the Freedman handle perspective.) It now immediately follows (\cite{steindiff}, cf.\ Theorem~\ref{main} above) that their handle compactifications embed in $\C^2$ with Stein interior. Let $\r\subset\C^2$ be such a Freedman handlebody with  a corresponding Stein onion structure. DeMichelis and Freedman showed that for some $\sigma_0\in (0,1)$, each diffeomorphism type in the set $\{\r_\sigma\mid \sigma_0\le\sigma\le1\}$ occurs only countably often, implying uncountably many diffeomorphism types (with the cardinality of the continuum in ZFC set theory). Since the Cantor set has the cardinality of the continuum, the same conclusion follows for the Stein open subsets indexed by $\sigma\in\Sigma\cap[\sigma_0,1]$. A deeper analysis applies after end sum with other manifolds (which, in our context, means taking the boundary sum before passing to the interiors of the manifolds): 
 
 \begin{lem}\label{uncountableR4}\cite[Lemma~7.3]{MinGen}.
Let $\{V_\sigma\mid \sigma_0\le\sigma\le1\}$ be a family of open 4-manifolds, such that each compact submanifold of each $V_\sigma$ has a smooth embedding into a closed, simply connected, negative definite 4-manifold. Let $V_\sigma'$ be the end sum of $V_\sigma$ with $\r_\sigma$. Then in the family $\{V'_\sigma\mid \sigma_0\le\sigma\le1\}$, each diffeomorphism type appears only countably often.
 \end{lem}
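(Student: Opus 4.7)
The plan is to show that the DeMichelis-Freedman argument for uncountably many diffeomorphism types in $\{\r_\sigma\}$ continues to distinguish $V'_\sigma = V_\sigma \natural \r_\sigma$ up to countable ambiguity. Recall that DeMichelis-Freedman detect $\sigma$, modulo countable sets, via an invariant of $\r_\sigma$ built from the smooth embeddability of its compact submanifolds into closed, simply connected, negative-definite 4-manifolds: Taubes' periodic-ends theorem, applied to the Freedman-handle periodic end of $\r_\sigma$, forces a monotone obstruction that is constant on only countable subsets of $[\sigma_0,1]$.

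The key technical step is the equivalence: a compact submanifold $K \subset \r_\sigma$ admits a smooth embedding into a closed, simply connected, negative-definite 4-manifold if and only if $K$, viewed as a compact submanifold of $V'_\sigma$ via the inclusion $\r_\sigma \hookrightarrow V'_\sigma$, admits such an embedding. The nontrivial direction uses the hypothesis on $V_\sigma$: starting from a smooth embedding $K \hookrightarrow N'$, any compact piece $L \subset V_\sigma$ embeds in some closed, simply connected, negative-definite $N$, and the connected sum $N \# N'$ is again closed, simply connected and negative definite and receives a smooth embedding of $L \natural K$; shrinking $L$ to a 4-ball if desired gives $L \natural K \cong K$. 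Conversely, a smooth embedding of $K \subset V'_\sigma$ in a closed s.c.\ neg-def 4-manifold restricts trivially to one of $K$ itself. Combining with the fact that, after isotopy, every compact submanifold of $V'_\sigma$ may be written as $L \natural K$ for compact $L \subset V_\sigma$ and $K \subset \r_\sigma$ (by sliding the 1-handle neck of the end sum clear of any prescribed compact set), this shows that the set of diffeomorphism classes of compact submanifolds of $V'_\sigma$ failing to embed in any closed s.c.\ neg-def 4-manifold coincides with that of $\r_\sigma$, the boundary-sum factors $L$ being inessential since $B^4 \natural K = K$.

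Consequently, the diffeomorphism invariant of $V'_\sigma$ given by this set of ``non-embedder'' classes agrees with the DeMichelis-Freedman invariant of $\r_\sigma$, and hence takes each value on only countably many $\sigma$. Any diffeomorphism $V'_\sigma \cong V'_\tau$ forces the invariants to match, so each fixed diffeomorphism type is realized for only countably many $\sigma$, as required. The main obstacle will be the careful formulation of the DeMichelis-Freedman invariant as purely a statement about diffeomorphism types of compact submanifolds, independent of their embedding in the ambient $\r_\sigma$ — only then does the transfer to $V'_\sigma$ become formal. Fortunately, this is essentially how the invariant is framed in \cite{DF}, so the adaptation should require only cosmetic changes together with the end-sum bookkeeping above.
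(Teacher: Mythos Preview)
The paper does not prove this lemma; it is quoted verbatim from \cite[Lemma~7.3]{MinGen} and used as a black box. So there is no ``paper's proof'' to compare against here, only the original in \cite{MinGen} (building on \cite{DF}).

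Your sketch, however, has a genuine gap: the invariant you propose is trivial. You take the DeMichelis--Freedman invariant of $\r_\sigma$ to be the set of diffeomorphism classes of compact submanifolds that fail to embed in any closed, simply connected, negative-definite $4$-manifold. But the paper states explicitly (just before the lemma) that $\r$ embeds smoothly in $\C^2$, hence in $S^4$; consequently every compact subset of every $\r_\sigma$ already embeds in $S^4$ (or in $\overline{\C P^2}$ if you want $b_2>0$). Your ``non-embedder'' set is therefore empty for every $\sigma$, and equally empty for every $V'_\sigma$, so it cannot distinguish anything. The hypothesis on $V_\sigma$ is not there to ensure that $V_\sigma$ contributes no non-embedders---$\r_\sigma$ contributes none either.

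The actual DeMichelis--Freedman mechanism is different. The family $\{\r_\sigma\}$ is constructed so that its end is diffeomorphic to the end of a smooth open manifold whose intersection form (after suitable capping) would be definite and nondiagonalizable. The diffeomorphism invariant one assigns to an open $4$-manifold $W$ is, roughly, $\sup\{\tau:\bar\r_\tau\text{ smoothly embeds in }W\}$, evaluated against the fixed nested family $\{\bar\r_\tau\}$. For $W=V'_\sigma$ one has the trivial lower bound $\sigma$, and a diffeomorphism $V'_\sigma\cong V'_{\sigma'}$ with $\sigma'>\sigma$ would give $\bar\r_\sigma\hookrightarrow V'_\sigma$, landing in some $L\natural \bar\r_{\sigma''}$ with $\sigma''<\sigma$. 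The hypothesis on $V_\sigma$ lets you embed $L$ in a closed, simply connected, negative-definite $N$, hence $\bar\r_\sigma\hookrightarrow N$ (absorbing $\bar\r_{\sigma''}$ into a ball since $\r\subset S^4$). This is what feeds into Taubes: the resulting re-embedding, combined with the exotic end data of $\r$, builds a periodic-end definite manifold contradicting \cite{Tb}. The countability then comes from a monotonicity argument on this sup-invariant. Your end-sum bookkeeping in the second paragraph is the right maneuver, but it must be plugged into this invariant rather than the vacuous one.
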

 
\begin{proof}[Proof of Theorem~\ref{surfaceUncountable}.]
First note that in the above lemma, $V_\sigma$ and $V'_\sigma$ have the same genus function. This is because $\r$ embeds in $\R^4$, giving embeddings $V_\sigma\subset V'_\sigma\subset V_\sigma$ respecting homology in the obvious way. Thus, the Stein Freedman handlebody $\G\subset X$ in the proof of Theorem~\ref{surface} can be connected by a 1-handle to a small copy of $\r$ in $X-\G$ without changing its genus function. The resulting Stein Freedman handlebody $\G'$ is topologically ambiently isotopic to $\G$ (although it is modeled on a different 2-handlebody with more complicated core). Since a minimal genus representative of the generator of homology is compact, the Stein surfaces $\inter\G'_\sigma$ for $\sigma\in \Sigma$ sufficiently close to 1 will all have the same minimal genus of the generator as $\G$, but represent uncountably many diffeomorphism types by the lemma. (Note that $\G$ smoothly embeds in the disk bundle over $F$ with Euler number $e=\alpha\cdot\alpha\le0$, and hence, in the negative definite $|e|$-fold blowup of $S^4$.) To apply this idea to the larger minimal genera at deeper levels, note that each of the lowermost middle thirds $(\tau1,\tau2)$ in $[0,1]$, where $\tau$ is a string of zeroes, is disjoint from the Cantor set $\Sigma$. Thus, it parametrizes levels of $\G$ that we do not require to be pseudoconvex. (See Figure~\ref{param2}.) Embed a Stein copy of $\r$ in each such region and connect it to $\G_{\tau1}$ by an Eliashberg 1-handle so that the collar in $\G$ parametrized by $[\tau02,\tau1]$ fits pseudoconvexly with the $[.2,1]$ collar of $\r$. We can assume that these collars give pseudoconvex levels of the mapping cylinder, after a topological isotopy of the latter. This gives a new Stein onion topologically ambiently isotopic to $\G$, satisfying the theorem.
\end{proof}

\begin{figure}
\labellist
\small\hair 2pt
\pinlabel $\r$ at 65 124
\pinlabel $\r$ at 60 53
\pinlabel $\r$ at 52 20
\pinlabel $.002$ at -16 9
\pinlabel $.01=.00\bar2$ at -29 17
\pinlabel $.02$ at -14 27
\pinlabel $.1=.0\bar2$ at -24 38
\pinlabel $.2$ at -14 72
\pinlabel $1=.\bar2$ at -22 106
\endlabellist
\centering
\includegraphics{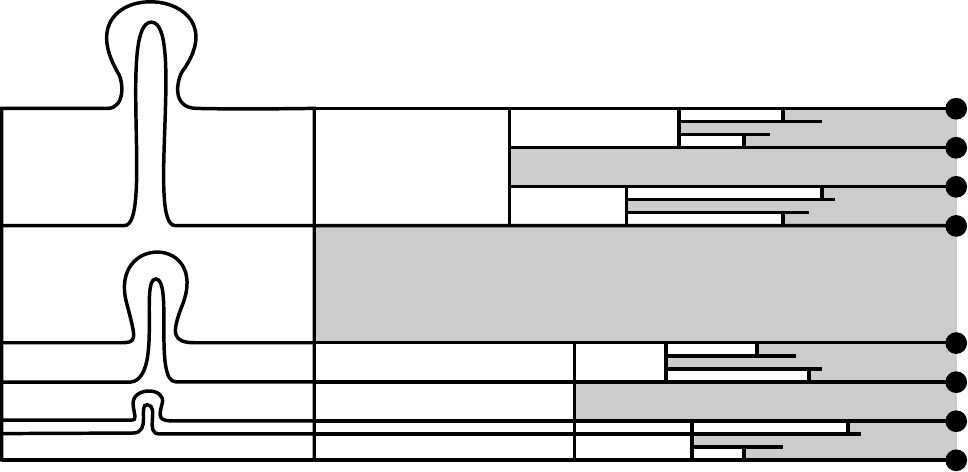}
\caption{Creating uncountably many diffeomorphism types.}
\label{param2}
\end{figure}

The Stein onion created here still comes from the obvious mapping cylinder for the disk bundle, with core surface $F^*\subset X$ smooth and totally real except at one point. However, the Freedman handlebodies $\G'_\sigma$, unlike those of our previous proofs, are modeled on a different handlebody (whose core is made from $F^*$ by connecting it to a contractible 2-complex). If we think of them as modeled on the original handlebody, their boundaries will fail to be smooth on the wild Cantor set $\bar\r_\sigma-\r_\sigma$.

These techniques can be applied whenever we isotope a topologically embedded 2-handlebody to a Stein onion as in Theorem~\ref{onion1}. For example:

\begin{thm}\label{infDiff}
Let $\mathcal{H}\subset X$ be as hypothesized in Theorem~\ref{onion1}.
\item[a)] Suppose $A\subset{\rm H}_2(\mathcal{H})$ is a finitely generated direct summand. Then the resulting Stein onion can be arranged so that $A$ is the subset of ${\rm H}_2(\mathcal{H})$ consisting of all classes $\alpha$ with $G(\alpha)$ for the subset $\G_\sigma$ bounded as $\sigma\to0$.
\item[b)] If every finite subhandlebody of $\mathcal{H}$ smoothly embeds in a closed, simply connected, negative definite 4-manifold, we can assume the Stein surfaces $\inter\G_\sigma$ for $\sigma\in\Sigma-\{0\}$ represent uncountably many diffeomorphism types. If $\mathcal{H}$ is finite, it suffices for it to topologically embed in such a smooth, negative definite manifold.
\end{thm}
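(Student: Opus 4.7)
The plan is to adapt the techniques from Theorems~\ref{surface} and \ref{surfaceUncountable} from the single-class setting to an arbitrary 2-handlebody, exploiting the flexibility of the Stein onion construction (Theorems~\ref{onion}/\ref{onion2} and Addendum~\ref{univ}) together with the minimal-genus analysis of \cite{MinGen}. For part (a), since $H_2(\mathcal{H})$ is free abelian and $A$ is a direct summand, I would choose a free basis $\{\alpha_i\}$ of $A$ and extend to a free basis $\{\alpha_i\}\cup\{\beta_j\}$ of $H_2(\mathcal{H})$. After handle slides (with canceling 1-2 pairs introduced as needed to realize the change of basis integrally), distinguished 2-handles represent the $\alpha_i$ and the rest represent the $\beta_j$. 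In applying Theorems~\ref{AnnGCH} and \ref{iso} to build $\G\subset X$, I would keep the $\alpha_i$-handles as smooth 2-handles by placing them inside the preassigned compact set $P$ of Theorem~\ref{onion} (or inside $\mathcal{H}^-$); their cores are then smooth disks lying in every $\G_\sigma$, giving a uniform bound $G_{\G_\sigma}(\alpha)\le g_0(\alpha)$ for each $\alpha\in A$.

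To force $G_{\G_\sigma}(\alpha)\to\infty$ for every $\alpha\notin A$, I would mimic the proof of Theorem~\ref{surface} on each $\beta_j$-Freedman handle simultaneously: at the $n$-th innermost substep $\tau=.0^n$ of the Stein onion construction, add $p_j\cdot n$ additional positive double points to the first-stage core of each $\beta_j$-Freedman handle built by the thin procedure producing $\G_{.0^n 1}$, where $(p_j)$ is chosen generically so that no nonzero finite integer combination $\sum m_j p_j$ vanishes (e.g., distinct primes). Extra zig-zags keep the cores totally real, and Addendum~\ref{AnnGCHAdd} ensures the Stein extension still goes through. By the technique of \cite[Example~6.1(b)]{JSG} used in the proof of Theorem~\ref{surface}, the resulting smooth 4-manifold $\G_{.0^n 1}$ admits auxiliary Stein embeddings witnessing the gauge-theoretic lower bound $G_{\G_{.0^n 1}}(\alpha)\ge G^{\C}(\alpha)$, where the relevant $|\langle c_1,\alpha\rangle|$ grows at rate $|2\sum m_j p_j|\cdot n$ for any $\alpha=\sum n_i\alpha_i+\sum m_j\beta_j$ with nonzero $\beta$-part, forcing the minimal genus to infinity.

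For part (b), I would implement the template of Theorem~\ref{surfaceUncountable} inside the Stein onion. At each innermost step, in a middle-thirds gap $(\tau 1,\tau 2)$ of the Cantor parametrization where pseudoconvexity of intermediate levels is not required, embed a small Stein copy of the DeMichelis--Freedman exotic $\R^4$-homeomorph $\r\subset\C^2$ and attach it to $\G_{\tau 1}$ by an Eliashberg 1-handle, so that the collars fit together into the pseudoconvex collar of $\G_{\tau 2}$. The resulting Stein onion is topologically ambiently isotopic to the original $\G$, and for $\sigma$ in suitable subintervals of $\Sigma$, each $\G'_\sigma$ is an end sum of $\G_\sigma$ with some $\r_\sigma$. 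Lemma~\ref{uncountableR4} then forces each diffeomorphism type in the family to occur only countably often, giving uncountably many types in $\{\inter\G'_\sigma\mid\sigma\in\Sigma-\{0\}\}$. Its hypothesis is immediate when finite subhandlebodies of $\mathcal{H}$ smoothly embed in a closed, simply connected, negative definite manifold, since compact subsets of $\inter\G'_\sigma$ lie in finite smooth subhandlebodies of $\G'$. For finite $\mathcal{H}$ that only topologically embeds in such a $Y$, I would first apply Theorem~\ref{iso} inside $Y$ to produce a Stein Freedman handlebody $\G_0\subset Y$, then run the onion construction simultaneously in $X$ and $Y$ via Addendum~\ref{univ}, so that compact subsets of $\inter\G'_\sigma$ smoothly embed in $Y$.

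The main obstacle is part (a): simultaneously controlling the smooth minimal genus along infinitely many classes so that every nonzero class outside $A$ has genus tending to infinity while $A$ is preserved. The generic choice of $(p_j)$ resolves the combinatorial aspect, but one must check carefully that the auxiliary Stein embeddings supplying the genus lower bounds (via \cite{MinGen}) can be arranged compatibly for all $\beta_j$ at once and coexist with the original embedding into $X$, and that the added double points at each innermost substep interact correctly with the pseudoconvex collar system required by Theorem~\ref{onion2}. For part (b), the parallel subtlety is placing the $\r$-insertions in the non-pseudoconvex gaps so as not to disturb the Stein structure on the levels indexed by $\Sigma$.
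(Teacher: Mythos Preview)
Your plan for part (b) is essentially the paper's: insert DeMichelis--Freedman copies of $\r$ in the middle-third gaps and invoke Lemma~\ref{uncountableR4}, exactly as in Theorem~\ref{surfaceUncountable}. For the finite topological case the paper is a bit more direct than your simultaneous-onion idea via Addendum~\ref{univ}: it simply refines $\mathcal{H}$ inside the negative definite manifold to a smoothly embedded Freedman handlebody $\G$, and then every compact subset of any $\inter\G_\sigma$ lies in a finite subtower, which is smooth there.

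Part (a) has two genuine problems. First, you cannot ``keep the $\alpha_i$-handles as smooth 2-handles'' in $P$ or in $\mathcal{H}^-$. Putting them in $\mathcal{H}^-$ would require each to be an honest Stein 2-handle, i.e., to satisfy Eliashberg's framing condition, which fails in general (this is the whole reason Freedman handles enter the picture). Putting a 2-handle in $P$ is impossible since $P\subset\inter\G$ is compact while each Freedman handle meets $\partial\bar\G$. The paper's fix is simple and you were close to it: after sliding so that $A={\rm H}_2(\mathcal{H}')$ for a subhandlebody $\mathcal{H}'$, apply Theorem~\ref{iso} and take $P$ to be a finite union of \emph{surfaces} in the resulting $\G'$ generating $A$. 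Then Theorem~\ref{onion}(a) carries $P$ (up to smooth isotopy) into every $\G_\sigma$, bounding $G(\alpha)$ for $\alpha\in A$.

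Second, your scheme for forcing $G(\alpha)\to\infty$ off $A$ does not close. Your lower bound comes from auxiliary Stein embeddings with controlled $c_1$, but a single choice of integers $p_j$ on the $\beta_j$-handles cannot make $\sum m_j p_j\ne0$ for \emph{every} nonzero integer vector $(m_j)$ (take $m_j=p_k$, $m_k=-p_j$). You would need a different auxiliary embedding for each $\alpha\notin A$, and arranging these compatibly with a single Freedman handlebody $\G_\sigma$ is exactly the content of \cite[Lemma~3.2 and Addendum~3.3]{MinGen}. The paper invokes that lemma directly (with $(H_2,H_1)=(\mathcal{H},\mathcal{H}')$ and $\pi=\id$): it supplies model Stein handlebodies, indexed by a parameter $k_1$, into which $\G_\sigma$ embeds, and in which every class outside $A$ has $M(\alpha)=\max(G(\alpha),|\alpha\cdot\alpha|)>k_1$. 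Letting $k_1\to\infty$ as $\sigma\to0$ then gives the conclusion. Your ``main obstacle'' paragraph correctly identified this as the crux; the resolution is to cite that lemma rather than improvise.
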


\begin{proof}
For (a), slide handles so that $A={\rm H}_2(\mathcal{H}')$ for some subhandlebody $\mathcal{H}'$. Apply Theorem~\ref{iso} and let $P$ be the union of a finite collection of surfaces in the image $\G'$ of $\mathcal{H}'$ generating $A$. Construct the Freedman handlebodies $\G_\sigma$ so that they embed in models taken from \cite[Lemma~3.2]{MinGen} (for $n=2$, $(H_2,H_1)=(\mathcal{H},\mathcal{H}')$ and the trivial cover $\pi=\id_{\mathcal{H}}$), relative to $\mathcal{H}'$ via \cite[Addendum~3.3]{MinGen}. Letting $k_1$ increase without bound as $\sigma\to0$ guarantees that each $\alpha$ outside of $A$ has $G(\alpha)$ increase without bound. For (b), extend $\G$ and apply Lemma~\ref{uncountableR4} as in the proof of Theorem~\ref{surfaceUncountable}. If $\mathcal{H}$ is finite, the auxiliary topological embedding can be refined to a smooth embedding of a suitably fine $\G$.
\end{proof}

Use of \cite[Lemma~3.2]{MinGen} in (a) guarantees that the subgroup $A\subset{\rm H}_2(\inter\G_\sigma)$ is diffeomorphism invariant, characterized as the span of classes $\alpha$ such that $M(\alpha)=\max(G(\alpha),|\alpha\cdot\alpha|)\le k_1$ (for a suitably large $k_1$). If $A$ is a proper subgroup, we obtain infinitely many diffeomorphism types $\inter\G_\sigma$, distinguished by the minimal value of $M(\alpha)$ for $\alpha\in{\rm H}_2(\mathcal{H})-A$. We can actually make any preassigned filtration by finitely generated summands diffeomorphism invariant in this way by more intensive use of \cite[Lemma~3.2]{MinGen}, or even vary the filtration as $\sigma\to0$. If (b) also applies, we can realize each value of the minimum by uncountably many Stein diffeomorphism types of $\G_\sigma$. If we weaken the mapping cylinder structure so that whenever $\sigma<\sigma'$ we have an inclusion $\bar\G_\sigma\subset\bar\G_{\sigma'}$ that may not be into the interior, then (b) gives uncountably many diffeomorphism types with the same genus function as each $\G_{.0\dots01}$. (Apply the proof of Theorem~\ref{surfaceUncountable} to get an uncountable family of Stein Freedman handlebodies whose boundaries all agree outside a neighborhood of $\r$.) All of these variations apply with universal families of diffeomorphism types as in Addendum~\ref{univ}. We can sometimes obtain stronger uncountability results without negative definiteness, for example, Stein onions whose subsets $\inter\G_\sigma$ are pairwise nondiffeomorphic, Theorem~\ref{R4}(b).

As a concrete application, consider domains of holomorphy (Stein open subsets) in $\C^2$.

\begin{cor}\label{C2}
Let $\mathcal{H}$ be a 2-handlebody with a topologically collared embedding in $\C^2$ (or a possibly infinite blowup of $\C^2$), with $\cl\mathcal{H}-\mathcal{H}$  totally disconnected. Then $\mathcal{H}$ is topologically ambiently isotopic to a Stein onion exhibiting uncountably many diffeomorphism types of topologically ambiently isotopic (rel $K$) Stein neighborhoods, forming a neighborhood system of the core 2-complex $K$ if $\mathcal{H}$ is finite. \qed
\end{cor}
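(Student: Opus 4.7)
The plan is to realize the stated Stein onion via Theorem~\ref{onion1} and then invoke Lemma~\ref{uncountableR4} to produce uncountably many diffeomorphism types, following the modification in the proof of Theorem~\ref{surfaceUncountable}.

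First I apply Theorem~\ref{onion1} to the topological embedding $\mathcal{H}\hookrightarrow X$, where $X$ is $\C^2$ or the given (possibly infinite) blowup. This yields a topological ambient isotopy sending $\mathcal{H}$ onto a Stein onion $\bar\G$, together with a nested family $\{\inter\G_\sigma\mid\sigma\in\Sigma-\{0\}\}$ of Stein open subsets, all topologically ambiently isotopic rel $g(K)$ in $X$ (cf.\ Corollary~\ref{2cplx}). By Theorem~\ref{iso}, the underlying Freedman handlebody $\G$ is smoothly embedded in $X$, so every compact submanifold of each $\inter\G_\sigma$ is a compact subset of $X$. The key geometric input is that any compact subset of $X$ smoothly embeds in a closed, simply connected, negative-definite smooth 4-manifold: a compact subset of $\C^2$ lies in a 4-ball and hence in $\#_n\overline{\CP^2}$ for any $n\ge 1$, while a compact subset of a (possibly infinite) blowup of $\C^2$ meets only finitely many exceptional $\CP^1$'s and so embeds in such a connect sum for $n$ large enough. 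This verifies the hypothesis of Lemma~\ref{uncountableR4} for the family $V_\sigma = \inter\G_\sigma$.

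With this in hand, I follow the modification in the proof of Theorem~\ref{surfaceUncountable}: in each lowermost middle third $(\tau1,\tau2)\subset[0,1]$, which lies outside $\Sigma$ and parametrizes levels not required to be pseudoconvex, I embed a small Stein copy of the DeMichelis--Freedman family $\r$ inside $\G$ and join it to $\G_{\tau1}$ by an Eliashberg 1-handle, arranging the pseudoconvex collars of $\G$ and of the added copy of $\r$ to match after a topological isotopy of the mapping cylinder. Lemma~\ref{uncountableR4} then forces the resulting Stein surfaces $\inter\G_\sigma$, $\sigma\in\Sigma-\{0\}$, to realize uncountably many diffeomorphism types, and the neighborhood system claim for finite $\mathcal{H}$ is immediate from compactness of $\partial\mathcal{H}$ together with the mapping cylinder structure on $\bar\G$. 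This is essentially a specialization of the machinery in Sections~\ref{Iso}--\ref{Exotic} to $\C^2$ and its blowups, so I do not anticipate a serious obstacle; the main point to double-check is that in the infinite blowup case every compact subset of $X$ meets only finitely many exceptional curves, which is automatic since the blowup locus is discrete in $X$.
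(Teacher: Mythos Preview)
Your proposal is correct and follows essentially the same route as the paper: the paper simply invokes Theorem~\ref{infDiff}(b), noting that its hypothesis holds once $\mathcal{H}$ is refined to a smoothly embedded Freedman handlebody, and you have unwrapped exactly that argument (Theorem~\ref{onion1}, then the $\r$-insertion of Theorem~\ref{surfaceUncountable} via Lemma~\ref{uncountableR4}, justified by the negative-definite embedding of compact pieces of blown-up $\C^2$). The only cosmetic difference is that the paper packages the last two steps as a citation of Theorem~\ref{infDiff}(b) rather than spelling them out.
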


\noindent The hypothesis for Theorem~\ref{infDiff}(b) is satisfied here once we refine $\mathcal{H}$ to a smoothly embedded Freedman handlebody. We have considerable control of minimal genera; cf.\ \cite[Corollary~7.5]{MinGen}. Every Stein open subset of blown up $\C^2$  satisfies the above conclusion after (nonambient) isotopy (cf.\ proof of Corollary~\ref{char}).

%%%%%%%%%%%%%%%%%%%666666666666666666666666666666666666%%%%%%%%%%%%%%%%%%%%%%%%%%%%

\section{Pseudoconvex subsets}\label{Psc}

We now examine various topological notions related to pseudoconvexity. We will consider topologically pseudoconvex 3-manifolds and the complementary notion of pseudoconcavity. We will see that these are common phenomena. First, however, we show that every tamely embedded 2-complex $K$ in a complex surface can be topologically isotoped to an embedding that is pseudoconvex in the sense that it is a nested intersection of Stein surfaces. (Tame means that $K$ is the core of some topologically embedded 2-handlebody, ruling out wild phenomena analogous to Alexander's horned sphere.) In particular, a finite 2-complex becomes a Stein compact. This sharpens the main theorem of \cite{JSG}, which showed that $K$ is $C^0$-small topologically isotopic to a 2-complex with a Stein neighborhood homeomorphic to the interior of a handlebody with core $K$. (In that construction, the neighborhood was a Casson handlebody interior, so its set-theoretic boundary was not a manifold.)

\begin{cor} \label{2cplx}
Let $K \subset X$ be a tame 2-complex in a complex surface. Then after subdivision, $K$ is $C^0$-small topologically ambiently isotopic to the core $K'$ of a Stein onion. In particular, $K'$ is smooth and totally real except for one nonsmooth point of each 2-cell (of the subdivision), and is the intersection of a nested family of Stein neighborhoods that are topologically ambiently isotopic to each other $\rel K'$. These realize infinitely many diffeomorphism types if ${\rm H}_2(K)\ne0$ and uncountably many if $X$ is $\C^2$ or a blowup of it.
\end{cor}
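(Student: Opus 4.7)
The plan is to reduce to Theorem~\ref{onion1} by realizing $K$ as the core of a suitable 2-handlebody, then read off the remaining claims from the results of Section~\ref{Exotic} and Corollary~\ref{C2}. Since $K$ is tame, it is (by definition) the core of some topologically embedded 2-handlebody $\mathcal{H}\subset X$. After pushing $\mathcal{H}$ slightly into its interior along a boundary collar (a $C^0$-small nonambient isotopy that fixes $K$), we may assume $\mathcal{H}$ is collared; if $K$ is finite then $\mathcal{H}$ may also be taken finite, so that $\cl\mathcal{H}-\mathcal{H}=\emptyset$ is trivially totally disconnected, and in general the tameness of $K$ lets us arrange this condition as well. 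Subdividing $K$ so that every 2-cell is a smooth 2-disk refines the handle structure on $\mathcal{H}$ and gives it a smooth mapping cylinder structure $\psi$ with $\psi(\{0\}\times\partial\mathcal{H})=K$, as constructed at the beginning of Section~\ref{Onionproof}.

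Now apply Theorem~\ref{onion1} to $\mathcal{H}\subset X$ with the mapping cylinder $\psi$: the resulting topological ambient isotopy is $C^0$-small (since the isotopy in Theorem~\ref{iso1} is supported in a neighborhood of $\cl\mathcal{H}$, and the passage to a Stein onion in the proof of Theorem~\ref{onion1} only refines this near $K$), and its final embedding $g$ is a Stein onion on $\psi$. Set $K'=g(K)$. By Definition~\ref{oniondef}, $K'$ is smooth and totally real away from one point of each open 2-cell, and the sets $\inter\bar\G_\sigma=g(\inter\mathcal{H}_\sigma)$ for $\sigma\in\Sigma-\{0\}$ are Stein. When $\mathcal{H}$ is finite, the family $\{\inter\mathcal{H}_\sigma\}$ is a neighborhood system of $K$ (by the discussion preceding Definition~\ref{oniondef}), so transporting by the homeomorphism $g$ (which is indeed a homeomorphism in the finite Stein Freedman handlebody case, since no generalized Casson handle in $\G$ needs collapsing) yields a neighborhood system of $K'$. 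To see that any two neighborhoods $\inter\bar\G_\sigma,\,\inter\bar\G_{\sigma'}$ are topologically ambiently isotopic rel $K'$, note that the canonical diffeomorphism $\mathcal{H}_\sigma\to\mathcal{H}_{\sigma'}$ rel $K$ coming from $\psi$ extends via the mapping cylinder structure to a smooth ambient isotopy of $\mathcal{H}$ rel $K$, supported away from $K$; conjugating by $g$ and extending by the identity outside a collar of $g(\partial\mathcal{H})\subset X$ produces the desired topological ambient isotopy of $X$ rel $K'$.

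For the final counting assertions, observe that ${\rm H}_2(K)={\rm H}_2(\mathcal{H})$. If ${\rm H}_2(K)\neq 0$, Theorem~\ref{infDiff}(a) with $A=0$ allows us to arrange the onion so that the genus function on the $\bar\G_\sigma$ blows up along an appropriate subfamily, producing infinitely many diffeomorphism types. When $X$ is $\C^2$ or a blowup, Corollary~\ref{C2} (equivalently, the hypothesis of Theorem~\ref{infDiff}(b) is satisfied after refining $\mathcal{H}$ to a smoothly embedded Freedman handlebody as in the proof of Theorem~\ref{surfaceUncountable}) upgrades this to uncountably many diffeomorphism types. The main thing to check carefully is the ambient isotopy claim rel $K'$, since $g$ is only a homeomorphism on each $\mathcal{H}_\sigma$ and the ambient structure in $X$ must be preserved; this is handled by the bicollared nature of each $\partial\bar\G_\sigma$ together with the support control of the isotopy in Theorem~\ref{iso}, so that all deformations occur inside a fixed outer Stein neighborhood of $K'$.
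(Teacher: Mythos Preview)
Your overall strategy matches the paper's, but there is a genuine gap in your justification of $C^0$-smallness. Saying the isotopy is supported in a neighborhood of $\cl\mathcal{H}$ does \emph{not} make it $C^0$-small: that neighborhood could have large diameter, so points could move far even though nothing outside the neighborhood moves. The paper's mechanism is different and essential. Given $\epsilon$, one first subdivides $K$ so that every cell has diameter $<\epsilon/3$; the point of the subdivision is to make cells small, not (as you write) to make 2-cells into smooth disks. One then thins $\mathcal{H}$ so that it decomposes as a union of subhandlebodies, each of diameter $<\epsilon$. The crucial fact, stated in Theorem~\ref{iso} and inherited by Theorem~\ref{onion1}, is that the ambient isotopy sends each subhandlebody of $\mathcal{H}$ into itself. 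Hence each point of $X$ moves at most the diameter of the subhandlebody containing it, namely $<\epsilon$. Without invoking this self-containment property of subhandlebodies, you have no control on how far points travel.

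A secondary point: your treatment of the condition that $\cl\mathcal{H}-\mathcal{H}$ be totally disconnected in the infinite case (``tameness lets us arrange this'') is too vague. The paper handles this by choosing the subdivision so that Remark~\ref{isoRem}(a) applies, which is the actual content needed. Your discussion of the ambient isotopy rel $K'$ between the various $\inter\bar\G_\sigma$ is fine, and your appeal to Theorem~\ref{infDiff} and Corollary~\ref{C2} for the diffeomorphism-type counts agrees with the paper.
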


\noindent The nonsmooth points are far from being conelike; see text following Theorem~\ref{onion1}.

\begin{proof}
Given $\epsilon$, subdivide $K$ so that each cell has diameter less than $\epsilon/3$. Then after narrowing the original handlebody neighborhood $\mathcal{H}$ of $K$, we can assume it is collared and is a union of subhandlebodies with diameter less than $\epsilon$. If $\cl\mathcal{H}-\mathcal{H}$ is totally disconnected, we can immediately apply Theorems~\ref{iso} and \ref{onion} or  \ref{onion2}. Since the resulting ambient isotopy sends subhandlebodies into themselves, it is $\epsilon$-small. If $\cl\mathcal{H}-\mathcal{H}$ is not totally disconnected, we can avoid the need for it by choosing the initial subdivision so that Remark~\ref{isoRem}(a) applies. The diffeomorphism types are distinguished by Theorem~\ref{infDiff} (with $A=0$) and the paragraph following its proof.
\end{proof}

Recall that a {\em Stein compact} in a complex surface is a compact subset $Q$ that has a Stein neighborhood system. It is a classical fact that such a $Q$ has Stein interior (e.g.\ \cite[Corollary~1.5.2]{HL}). It is easily checked that such a $Q$ is a collared topological 4-manifold if and only if its set-theoretic boundary is a bicollared topological 3-manifold. In this case, we call $Q$ a {\em topologically pseudoconvex} 4-manifold. Theorem~\ref{onion1} guarantees that a topologically embedded, compact 2-handlebody can be made topologically pseudoconvex by topological isotopy. (Shrink it if necessary to create a collar, apply the theorem and then shrink onto some $\bar\G_\sigma$ for which the ternary expansion of $\sigma$ has no ones and infinitely many zeroes.) More strongly, the resulting collar determines an uncountable system of Stein neighborhoods of the resulting $Q$ with topologically pseudoconvex closure. We extend the notion to 3-manifolds:

\begin{de}\label{deftoppsc}
A topological embedding (resp.\ immersion) $f\co M\to X$ of a closed 3-manifold into a complex surface will be called {\em topologically pseudoconvex} if there is a topologically pseudoconvex 4-manifold $Q$ in some complex surface $X'$, and a homeomorphic identification of $M$ with $\partial Q$ so that $f$ extends to a holomorphic embedding (resp.\ immersion) of some neighborhood of $\partial Q$ in $X'$.
\end{de}

Topologically pseudoconvex 3-manifolds share some of the familiar properties of hypersurfaces that are pseudoconvex (and holomorphically fillable) in the usual smooth sense. A smooth (strictly) pseudoconvex hypersurface $N$ is canonically oriented. If the ambient manifold is Stein, $N$ cuts out a compact Stein domain (so in our dimension of interest $N$ is topologically pseudoconvex.) Similar reasoning shows:

\begin{prop}\label{pscbasics}
A topologically pseudoconvex immersion canonically orients $M$. A topologically pseudoconvex embedding into a Stein surface $V$ cuts out a topologically pseudoconvex region.
\end{prop}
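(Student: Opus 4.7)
The plan is to prove each part by exploiting the model $(X',Q')$ packaged with the topologically pseudoconvex structure, and then to transfer its local holomorphic/analytic data into the ambient $V$.

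For part~(1), observe that the definition of a topologically pseudoconvex immersion already carries an orientation. By Definition~\ref{deftoppsc}, $f\co M\to X$ is given together with a topologically pseudoconvex $Q\subset X'$ and a homeomorphism $M\cong\partial Q$ extended by a biholomorphism near $\partial Q$. Since complex structures respect orientations, $X'$ is oriented, so the codimension-$0$ submanifold $Q\subset X'$ is oriented, and $\partial Q$ inherits the boundary orientation; transporting this back via the prescribed homeomorphism orients $M$. No auxiliary choice enters, and the argument is local, so the immersed case follows verbatim.

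For part~(2), first identify the region. Because $V$ is Stein, by Eliashberg it is diffeomorphic to the interior of a 2-handlebody and thus has the homotopy type of a 2-complex, whence $\rm H_3(V;\Z)=0$. The closed, connected, bicollared 3-submanifold $f(M)\subset V$ is therefore nullhomologous, so it separates $V$, and a null-chain for $f(M)$ is compactly supported: exactly one of the two components of $V-f(M)$ is relatively compact. Let $Q$ denote its closure. The canonical orientation from part~(1) corresponds to the boundary orientation of $Q$: locally the biholomorphic extension $\phi$ identifies the coorientation side of $f(M)$ with the $Q'$-side of $\partial Q'$, which is the side carrying Stein neighborhoods; globally this must be the relatively compact component, since any Stein compact inside $V$ is necessarily compact.

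To exhibit a Stein neighborhood system of $Q$, let $\phi\co N\to N'$ be the biholomorphism of the definition. Using the Stein neighborhood system of $Q'$ in $X'$ and sublevel sets of a plurisubharmonic exhaustion, pick for each $\epsilon>0$ a compact Stein domain $D'_\epsilon\subset X'$ with smooth strictly pseudoconvex boundary, such that $Q'\subset\inter D'_\epsilon$ and the collar $D'_\epsilon-\inter Q'\subset N'$, with $D'_\epsilon$ shrinking to $Q'$ as $\epsilon\to 0$. Transport this collar via $\phi^{-1}$ into $N$ and adjoin $Q$ to obtain a compact region $D_\epsilon\subset V$, whose boundary is a smooth strictly pseudoconvex hypersurface (in the complex structure of $V$, because $\phi$ is biholomorphic) and with $D_\epsilon$ on the pseudoconvex side. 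Such a region in a Stein surface is itself Stein, via a plurisubharmonic exhaustion built from one on $V$ together with a logarithmic singularity against a defining function for $\partial D_\epsilon$. The family $\{D_\epsilon\}$ shrinks to $Q$, so it is a Stein neighborhood system; and since $\partial Q=f(M)$ is a bicollared topological 3-manifold, $Q$ is a collared topological 4-manifold and hence topologically pseudoconvex.

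The main obstacle is the coordination between the abstract model $(X',Q')$ and the ambient $V$: showing that the canonical orientation of part~(1) really selects the relatively compact component of $V-f(M)$ (needing the $\rm H_3$-vanishing plus the uniqueness of the compact side), and verifying that the regions $D_\epsilon$, assembled from a patch of $V$ and a biholomorphically imported collar from $X'$, are globally Stein in $V$—a classical but nontrivial fact about pseudoconvex boundaries inside Stein manifolds.
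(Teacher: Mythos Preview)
Your Part~(1) has a genuine gap. The data of a topologically pseudoconvex immersion is just the map $f\co M\to X$; the pair $(Q,X')$ and the identification $M\cong\partial Q$ are auxiliary choices whose \emph{existence} is asserted by Definition~\ref{deftoppsc}, not part of the structure. So the content of ``canonically orients'' is precisely that all such choices induce the same orientation on $M$, and your sentence ``No auxiliary choice enters'' is exactly where the argument is missing. The paper handles this by contradiction: if two fillings $Q$ and $Q'$ induced opposite boundary orientations, then near $f(M)$ one has plurisubharmonic functions increasing toward $M$ from both sides; grafting them (via the smoothed maximum) yields a Stein open neighborhood of $M$ with two ends, hence with nonzero ${\rm H}_3$, which is impossible for a Stein surface. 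You need some argument of this kind.

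Your Part~(2) takes a somewhat different route from the paper and is essentially workable, but two points deserve care. First, ``a null-chain is compactly supported, hence one component is relatively compact'' is not quite an argument; what you want is that ${\rm H}_3(V)=0$ forces a connected Stein surface to have a single end, so a compact separating hypersurface bounds a relatively compact region. The paper instead grafts the exhausting plurisubharmonic functions of $V$ and of $\inter Q'$ (transported via the biholomorphism) to produce a proper plurisubharmonic function on one component $U$, concluding directly that $U$ is Stein and hence one-ended, so $\cl U$ is compact. Second, your Stein neighborhood system is built by importing smooth strictly pseudoconvex hypersurfaces $\partial D'_\epsilon$ through $\phi^{-1}$ and invoking the classical fact that a smooth strictly pseudoconvex hypersurface in a Stein manifold bounds a Stein domain. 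This is correct, but note that the ``classical fact'' is proved by the same grafting of plurisubharmonic functions that the paper uses directly; the paper's approach simply applies the grafting once more to each Stein neighborhood of $Q'$, avoiding the detour through smooth hypersurfaces.
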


\begin{proof}
Since ${\rm H}_3(V)=0$, every embedding of $M$ cuts $V$ into two components. Since $V$ and $\inter Q$ are Stein, each admits a plurisubharmonic function that is proper and bounded below. The maximum of two plurisubharmonic functions has a plurisubharmonic smoothing \cite[Corollary~3.15]{CE}. Thus, after adding a constant to one function, we can graft to obtain a proper plurisubharmonic function on one component $U$ of $V-f(M)$. Then $U$ is Stein, so it can have only one end, and $\cl U=U\cup f(M)$ is compact. Similarly, each sufficiently small Stein neighborhood of $Q$ determines a Stein neighborhood of $\cl U$. A topologically pseudoconvex immersion identifies $M$ with some $\partial Q$, orienting it by the boundary orientation. If some other such identification with $\partial Q'$ gave the opposite orientation, we could graft a pair of plurisubharmonic functions to construct a 2-ended Stein neighborhood of $\partial Q$, with forbidden 3-homology.
\end{proof}

Topologically pseudoconvex 3-manifolds are much more common than their smooth counterparts. We illustrate this now with some examples, then explore more deeply in \cite{TPC}. Since our examples all result from Theorem~\ref{onion1}, they automatically come with topological bicollarings exhibiting uncountably many other such 3-manifolds. These 3-manifolds will often be unsmoothable and smoothly distinct from each other in various ways (e.g., Theorem~\ref{R4}(b) or \cite[first sentence of Corollary~6.5]{MinGen}), although they are topologically parallel.

\begin{thm}\label{toppsc}
Let $M$ be a closed, oriented 3-manifold.
\begin{itemize}
\item[a)] $M$ has a topologically pseudoconvex immersion in $\C^2$.
\item[b)] There is an integer $n$ such that $M$ has a topologically pseudoconvex embedding in every closed, simply connected, complex surface $X$ with $b_\pm(X)> n$.
\item[c)] Suppose a bicollared topological embedding of $M$ in a complex surface $X$ bounds a compact region $C\subset X$. If $C$ is smoothable as an abstract topological 4-manifold  (e.g.\ if the embedding of $M$ is smooth) then there is a smooth embedding $Y\to \inter C$ (smoothed as an open subset of $X$), where $Y$ is a boundary sum of copies of $S^1\times D^3$, and an ambient connected sum $M\#\partial Y$ that is topologically ambiently isotopic in $X$ to a topologically pseudoconvex embedding.
\end{itemize}
\end{thm}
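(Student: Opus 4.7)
The three parts share a common skeleton: produce a 2-handlebody $\mathcal{H}$ in the target complex surface (immersed in case (a), embedded in (b) and (c)) with $\partial\mathcal{H}$ equal to the required 3-manifold, then apply Theorem~\ref{iso1} (or its refinement Theorem~\ref{onion1}) to topologically ambient-isotope $\mathcal{H}$ until its closure is topologically pseudoconvex. The new content in each part is producing this underlying handlebody together with its map to the target.

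For (a), every closed oriented 3-manifold $M$ bounds a 2-handlebody $\mathcal{H}=B^4\cup(\text{2-handles})$ coming from an integer-surgery diagram. Starting from the standard $B^4\subset\C^2$ and extending each 2-handle $D^2\times D^2$ by a generic smooth immersion, $\mathcal{H}$ admits a smooth immersion $\tilde f\co\mathcal{H}\looparrowright\C^2$ whose self-intersections are transverse double points that may be pushed into the interiors of the 2-handles and off a preassigned collar of $\partial\mathcal{H}$. The plan is then to run the proofs of Theorems~\ref{AnnGCH} and \ref{iso} in this immersed setting. Those constructions are local near the handles, where $\tilde f$ is a diffeomorphism, so the pullback of the complex structure on $\C^2$ plays the role of the almost-complex structure $J$, and Theorem~\ref{main} applies on each handle to produce a topological isotopy whose output is a Freedman handlebody $\bar\G$ with $\inter\G$ Stein in the inherited pullback structure. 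Then $\bar\G$ is topologically pseudoconvex inside the abstract Stein surface $\inter\G$, and the induced map on a neighborhood of $\partial\bar\G$ in $\inter\G$ is a holomorphic immersion to $\C^2$, giving Definition~\ref{deftoppsc}.

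For (b), let $\mathcal{H}$ be a 2-handlebody with $\partial\mathcal{H}=M$, and take $n$ large enough, depending only on the intersection form of $\mathcal{H}$ (e.g., twice its rank plus a small correction for parity and the Kirby--Siebenmann invariant). If $X$ is a simply connected closed complex surface with $b_\pm(X)>n$, then the intersection form of $\mathcal{H}$ embeds as an orthogonal summand of that of $X$. By Freedman's classification of simply connected closed topological 4-manifolds, $X$ then splits topologically as a connected sum in which $\mathcal{H}$ may be realized as a collared topologically embedded subhandlebody with $\cl\mathcal{H}-\mathcal{H}$ totally disconnected (in fact a single point). Applying Theorem~\ref{onion1} to $\mathcal{H}\subset X$ topologically ambient-isotopes it so that $\partial\bar\G\approx M$ becomes a topologically pseudoconvex embedding in $X$.

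For (c), use the hypothesis to smooth $C$ and fix a handle decomposition. Since $\partial C=M$ is connected and nonempty, no 4-handles are needed, so $C=h^0\cup(\text{1-handles})\cup(\text{2-handles})\cup h_1^3\cup\cdots\cup h_k^3$; the $k$ three-handles are the obstruction to a 2-handlebody structure. For each $h_i^3=D^3\times D^1$, extend its cocore arc $\{0\}\times D^1$ to a smoothly embedded circle $c_i\subset\inter C$ by joining its two endpoints through the 2-skeleton below. Disjoint closed tubular neighborhoods $N_i\approx S^1\times D^3$, boundary-summed by small arcs in $\inter C$, yield a smooth embedding $Y=\natural_{i=1}^k(S^1\times D^3)\hookrightarrow\inter C$. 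Then $C'=(C\setminus\inter Y)$, augmented in $X$ by a thin 1-handle connecting $M$ to $\partial Y$, is a compact 4-manifold with $\partial C'=M\#\partial Y=M\#\#_k(S^1\times S^2)$, and a handle-slide argument identifies $C'$ as a 2-handlebody (each 3-handle $h_i^3$ of $C$ cancels against the 1-handle supplied by the corresponding $S^1\times D^3$ summand of $Y$). Applying Theorem~\ref{iso1} to $C'\subset X$ gives the topologically pseudoconvex embedding of $M\#\partial Y$. The main obstacle is verifying this cancellation explicitly, i.e., choosing the $c_i$ so that $C'$ genuinely inherits a 2-handlebody structure, which should follow by a standard argument pairing each 3-handle with its dual 1-handle in $Y$.
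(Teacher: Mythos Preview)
Your approach matches the paper's in all three parts: build a 2-handlebody with the right boundary, then invoke Theorem~\ref{onion1}. For (a) you both pull back the complex structure via an immersion (the paper notes explicitly that the 2-handle framings are controlled by adding double points, via the formula $[F]\cdot[F]=e(\nu F)+2(k_+-k_-)$). For (b) the paper is more concrete than your form-embedding sketch: it chooses $\mathcal{H}$ with all \emph{even} 2-handle framings so that the double is $\#_n\, S^2\times S^2$, which Freedman's classification then embeds topologically (minus a point) in any simply connected $X$ with $b_\pm>n$. Your version is vaguer (embedding a possibly nonunimodular form as an orthogonal summand needs more justification, and the Kirby--Siebenmann remark is irrelevant since complex surfaces are smooth), but the strategy is the same.

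In (c) there is a genuine gap. You smooth $C$ abstractly and construct $Y$ in \emph{that} smooth structure, but the theorem requires $Y$ smoothly embedded in $\inter C$ with the structure inherited from $X$; these smoothings of $\inter C$ need not agree. The paper handles this by first building the 1-complex using the abstract handle decomposition, then invoking Quinn's Handle Straightening Theorem \cite[2.2.2]{Q} to smooth this 1-complex in the $X$-structure. Your construction also has a minor wrinkle: the cocore arc $\{0\}\times D^1$ of a 3-handle has its endpoints on $\partial C$, not in the 2-skeleton, so ``joining its two endpoints through the 2-skeleton below'' does not literally produce a circle in $\inter C$ without first pushing in. The paper's version sidesteps both your flagged cancellation issue and this wrinkle: it simply takes the 1-complex of 3- and 4-handle cocores (no need to eliminate 4-handles), whose closed complement is \emph{automatically} the 2-skeleton $C_2$---a 2-handlebody with no cancellation argument required---and then slides 1-cells to exhibit the 1-complex as a wedge of circles joined to $M$ by an arc.
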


\begin{proof}
We can realize $M$ as the boundary of a 2-handlebody $\mathcal{H}$ consisting of 2-handles attached to a 4-ball with even framings. (See e.g.\ \cite[Theorem~5.7.14]{GS} for further discussion.) For (a), we can easily construct a smooth immersion of $\mathcal{H}$ in $\C^2$. (Control the framings by adding double points and using the displayed formula near the beginning of Section~\ref{GCH}.) Pull back the complex structure to $\mathcal{H}$ and apply Theorem~\ref{onion1} to an isotopic copy of $\mathcal{H}$ in $\inter\mathcal{H}$. For (b), double $\mathcal{H}$ to get an embedding of $\mathcal{H}$ in $Z=\# n S^2\times S^2$ (where $n$ is the number of 2-handles of $\mathcal{H}$). By Freedman's classification of simply connected, topological 4-manifolds \cite{F}, every closed, simply connected 4-manifold $X$ with $b_+>n$ and $b_->n$  has a topologically embedded copy of $Z-\{p\}$, and hence, of $\mathcal{H}$. For $X$ complex, apply Theorem~\ref{onion1}.

For (c), we abstractly smooth $C$ and then present it as a handlebody. The cocores of the 3- and 4-handles form a 1-complex with a regular neighborhood whose closed complement is a 2-handlebody. After sliding 1-cells, we may identify the 1-complex as a wedge of circles attached by an arc to $\partial C=M$. Smooth this 1-complex in $X$ by Quinn's Handle Straightening Theorem \cite[2.2.2]{Q}, and identify its regular neighborhood as $Y$ attached to $M$. Apply Theorem~\ref{onion1} to the complementary 2-handlebody.
\end{proof}

Finally, we consider the corresponding notion of concavity.

\begin{de}\label{toppscv}
A compact 4-manifold $P$ topologically embedded in a complex surface will be called {\em topologically pseudoconcave} if each component of $\partial P$ is topologically pseudoconvex with reversed orientation.  A complex surface $V$ will be called {\em topologically pseudoconcave} if it is biholomorphic to the interior of such a $P$.
\end{de}

\noindent Equivalently, $V$ is made from a closed complex surface $X$ by cutting out finitely many topologically pseudoconvex 4-manifolds. (By definition, each end of $V$ has a topological open collar that is holomorphically identified with an exterior collar of some topologically pseudoconvex 4-manifold, so we can glue to recover $X$.) Part (a) of the following theorem exhibits many pseudoconcave examples homeomorphic to $\R^4$ and $I\times S^3$. These are made via Theorem~\ref{onion1}, so each end is topologically collared by a 3-manifold, with uncountably many topologically pseudoconvex levels $\sigma$, directed {\em inward}. The construction also completes a discussion from the end of the previous section (b).

\begin{thm}\label{R4}
(a) Every closed, simply connected, complex surface $X$ contains an uncountable family of pairwise nondiffeomorphic, topologically pseudoconcave, manifolds homeomorphic to $\R^4$ (or alternatively $I\times S^3$ with nondiffeomorphic interiors), nested with the order type of a Cantor set minus a countable boundary subset.

\item[(b)]  Every simply connected 2-handlebody $\mathcal{H}$ with $b_2(\mathcal{H})>0$ and $\partial\mathcal{H}$ diffeomorphic to $S^3$ underlies some Stein Freedman handlebody $\G$ for which no two levels $\partial\bar\G_\sigma$, $\sigma\in(0,1]$, of any given mapping cylinder can have disjoint diffeomorphic neighborhoods. In particular, the subsets $\inter\G_\sigma$ are pairwise nondiffeomorphic.
\end{thm}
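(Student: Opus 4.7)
The plan for part (a) proceeds by setting up Theorem~\ref{onion1}. Since $X$ is closed and simply connected, Freedman--Quinn furnishes a topological handle decomposition, and simple connectivity permits the cancellation of $1$- and $3$-handles topologically, yielding a decomposition with handles only of indices $0$, $2$ and $4$. Removing the $4$-handle gives a closed topological $2$-handlebody $\mathcal{H}\subset X$ with $\cl\mathcal{H}-\mathcal{H}=\emptyset$, whose complement in $X$ is a topological $4$-ball. Applying Theorem~\ref{onion1} to $\mathcal{H}$ yields a Stein onion with levels $\{\bar\G_\sigma\}_{\sigma\in\Sigma}$, and each complement $P_\sigma=\cl(X-\inter\bar\G_\sigma)$ is topologically ambiently isotopic to the original $4$-ball, so $\inter P_\sigma\approx \R^4$ topologically. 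Since $\partial\bar\G_\sigma$ is topologically pseudoconvex by Theorem~\ref{onion1}, $P_\sigma$ is topologically pseudoconcave in the sense of Definition~\ref{toppscv}. For the $I\times S^3$ variant, replace $P_\sigma$ by a tube region $\bar\G_{\sigma_2}-\inter\bar\G_{\sigma_1}$ for nested $\sigma_1<\sigma_2$ in $\Sigma$, which is topologically $I\times S^3$ with both boundary components topologically pseudoconvex in the orientations demanded for pseudoconcavity. Both families inherit the Cantor-set-minus-countable-boundary order type directly from $\Sigma$.

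To upgrade to uncountably many pairwise nondiffeomorphic types, adapt the final paragraph of the proof of Theorem~\ref{surfaceUncountable} to act on the pseudoconcave side. In each Cantor-gap middle third $(\tau 1,\tau 2)\subset [0,1]$ of the parametrization, embed a Stein copy of the DeMichelis--Freedman manifold $\r$ attached to $\bar\G_{\tau 1}$ by an Eliashberg $1$-handle, so that the pseudoconvex collar of $\bar\G_{\tau 2}$ extends continuously through $\r$. Then as $\sigma$ decreases in $\Sigma$, $\inter P_\sigma$ acquires one $\r$-summand (at its end) for each Cantor gap whose endpoints exceed $\sigma$. By the end-sum argument of Lemma~\ref{uncountableR4}, applied iteratively to the nested family $\{\inter P_\sigma\}$ through successive $\r$-absorptions, each diffeomorphism type appears at most countably often, so uncountably many types arise. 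Selecting one $\sigma$ per equivalence class yields an uncountable pairwise nondiffeomorphic subfamily, and the same $\r$-stacking argument distinguishes the interiors of the $I\times S^3$-homeomorphs.

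For part (b), apply Theorem~\ref{iso} to obtain a Stein Freedman handlebody $\G$ modeled on $\mathcal{H}$, then enhance the construction as follows. Smoothly embed a Stein copy of $\r$ in a small ball inside $\inter \mathcal{H}$ and arrange, via the universal construction of Addendum~\ref{univ}, that the mapping cylinder of $\G$ extends the natural mapping cylinder of $\r$ on this ball. This induces an order-preserving injection $h$ from a cofinal subset of $\Sigma$ into $\r$'s parameter space, with $\partial\bar\G_\sigma$ lying in a bicollar of $\partial\bar\r_{h(\sigma)}$. At the remaining Cantor gaps attach further Stein copies of $\r$ as in part (a), ensuring that every open neighborhood of every $\partial\bar\G_\sigma$ on the interior side contains an $\r$-portion whose DeMichelis--Freedman level is determined by $\sigma$. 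The hypothesis $b_2(\mathcal{H})>0$ supplies a nontrivial class $\alpha\in\rm H_2(\mathcal{H})$ whose minimal-genus representative can be forced arbitrarily close to each $\partial\bar\G_\sigma$ by stacking many surface stages between consecutive Cantor levels, as in Theorem~\ref{infDiff}(a) and the proof of Theorem~\ref{surface}, giving a $\sigma$-dependent genus invariant detectable in every interior-side neighborhood.

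The principal obstacle is detecting $\sigma$ from arbitrarily small (and a priori disjoint) neighborhoods of $\partial\bar\G_\sigma$, not merely from $\inter\bar\G_\sigma$ as a whole. A diffeomorphism $\phi\co U_\sigma\to U_{\sigma'}$ between two such disjoint neighborhoods must preserve both the genus function on embedded surface classes and the diffeomorphism type of any contained $\r$-piece. By the stacking construction in paragraph three, every neighborhood of $\partial\bar\G_\sigma$ on the interior side contains a minimal-genus representative of $\alpha$ whose genus depends injectively on $\sigma$, while every neighborhood on the exterior side contains an $\r$-portion whose DeMichelis--Freedman type (distinguished via iterated Lemma~\ref{uncountableR4}) also depends on $\sigma$. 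Any putative $\phi$ would therefore force $\sigma=\sigma'$, establishing pairwise nondiffeomorphism of disjoint neighborhoods. The special case that the subsets $\inter\bar\G_\sigma$ themselves are pairwise nondiffeomorphic follows by specializing $U_\sigma$ to be an interior neighborhood of $\partial\bar\G_\sigma$.
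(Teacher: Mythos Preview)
Your approach diverges from the paper's and contains genuine gaps in both parts.

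For part~(b), your genus and $\r$-piece argument cannot work. Since $\partial\mathcal{H}\cong S^3$, every level $\partial\bar\G_\sigma$ is a topological $S^3$, and any small (two-sided or one-sided) neighborhood of it is homeomorphic to an open subset of $I\times S^3$. Such a neighborhood has trivial $H_2$, so there is no class $\alpha$ on which to evaluate a genus function, and your claim that ``every neighborhood of $\partial\bar\G_\sigma$ on the interior side contains a minimal-genus representative of $\alpha$'' is vacuous. The $\r$-piece argument is similarly unmoored: embedding distinguishable $\r_\sigma$'s somewhere in $\G$ does not produce an invariant of arbitrary small neighborhoods of the levels. The paper's proof is entirely different and uses the hypotheses in an essential way: Freedman's classification lets one cap the $S^3$ boundary of $\mathcal{H}$ by a topological manifold with a \emph{nonstandard definite} intersection form, obtaining a closed $Z$, and then Theorem~\ref{iso} puts a Stein Freedman handlebody $\G$ inside $Z$. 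If two levels had disjoint diffeomorphic neighborhoods, one could build an end-periodic smooth $4$-manifold carrying that nonstandard definite form, contradicting Taubes~\cite{Tb}. This is the mechanism you are missing.

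For part~(a), two problems. First, your tube $\bar\G_{\sigma_2}-\inter\bar\G_{\sigma_1}$ is \emph{not} topologically pseudoconcave: its outer boundary $\partial\bar\G_{\sigma_2}$ is pseudoconvex with the orientation it inherits as the boundary of the Stein compact $\bar\G_{\sigma_2}$, which lies on the \emph{same} side as the tube, not the opposite side required by Definition~\ref{toppscv}; Proposition~\ref{pscbasics} shows a $3$-manifold cannot be pseudoconvex from both sides. The paper instead removes a small smoothly pseudoconvex ball from $\cl R_\sigma$, so both boundary components face the correct way. Second, your nondiffeomorphism argument via Lemma~\ref{uncountableR4} fails: that lemma requires each compact piece of $V_\sigma$ to embed in a closed, simply connected, negative definite manifold, but the complements $P_\sigma$ here are ``large'' exotic $\R^4$'s (as the paper notes after the proof, they do not even embed in $\R^4$), so the hypothesis is not available. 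Moreover, ``selecting one $\sigma$ per equivalence class'' would destroy the required Cantor-set nesting. The paper instead arranges (by refinement) that the \emph{same} abstract $\G$ also embeds in a manifold $Z$ as in part~(b); then part~(b) forbids diffeomorphic disjoint level-neighborhoods, which forces all the $R_\sigma=X-\bar\G_\sigma$ to be pairwise nondiffeomorphic while retaining the full nested family.
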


\begin{proof}
We prove (b) by the method of \cite[Theorem~7.7]{MinGen} (which also controls the genus function on $\G_\sigma$). By Freedman's classification, our hypotheses guarantee that we can extend $\mathcal{H}$ to a manifold homeomorphic to some $Z=\#\pm\C P^2$ by attaching a topological manifold with a nonstandard, definite intersection form. Theorem~\ref{iso} isotopes $\mathcal{H}$ to a Stein Freedman handlebody $\bar\G\subset Z$ respecting any given complex structure near $\mathcal{H}$ (for example the obvious structure on $Z-\{p\}$). If any two levels $\partial\bar\G_\sigma$ had disjoint diffeomorphic neighborhoods, we could extend $Z-\bar\G_\sigma$ to a manifold with a periodic end and nonstandard, definite intersection form, contradicting Taubes \cite{Tb}, cf.~\cite[Theorem~9.4.10]{GS}.

If $X$ in (a) is even, it must satisfy the inequality $11|\sigma(X)|\le8b_2(X)$. (This follows from the standard inequalities $2\chi(X)+3\sigma(X)\ge0$, $\chi(X)\ge3\sigma(X)$, $b_-(X)\ge1$, and $b_+(X)\ge3$ unless $\sigma(X)=0$.) By Freedman's classification, $X$ must then be homeomorphic to $\#\pm\C P^2$ or $\pm\# mK3\# nS^2\times S^2$. Thus, it is homeomorphic to a 2-handlebody $\mathcal{H}$ with a 4-handle attached to its $S^3$ boundary. Isotope the topological embedding of $\mathcal{H}$ to a Stein onion $\bar\G\subset X$. We can assume after refining that $\G$ also has an embedding as in the previous paragraph. The subsets $\inter\G_\sigma$ are then pairwise nondiffeomorphic, as are the regions $R_\sigma=X-\bar\G_\sigma$. For most $\sigma\in\Sigma$, the latter are pseudoconcave subsets homeomorphic (but not diffeomorphic) to $\R^4$. For $I\times S^3$, remove a convex open ball from each $\cl R_\sigma$. Such a ball is topologically ambiently isotopic to a standard ball in the topological $\R^4$ structure by 0-handle smoothing \cite{Q}. The resulting interiors are pairwise nondiffeomorphic as before.
\end{proof}

Unlike our Stein exotic smoothings of $\R^4$, the above pseudoconcave manifolds $R_\sigma$ cannot embed smoothly in $\R^4$ \cite[Theorem~9.4.3]{GS}, and they require infinitely many index-3 critical points in any proper Morse function \cite{T}; see also \cite[Exercise~9.4.20(a)]{GS}. In contrast, Kasuya and Zuddas \cite{KZ} exhibited a smoothly pseudoconcave complex structure on $B^4$. This gives a complex structure on the standard $\R^4$ that is pseudoconcave in a related smooth sense. However, it fails our definition since the boundary $S^3$ is overtwisted, so cannot be filled on the outside. In fact, if a smooth pseudoconcave boundary sphere can be filled on the outside, the filling must be a ball (up to blowups) \cite[Theorem~16.5]{CE}. Thus, the pseudoconcave manifold must be a punctured complex surface; in particular, not contractible.

For complex surfaces with arbitrary fundamental group, we can split by the method of Theorem~\ref{toppsc}(c):

\begin{cor}\label{split} Every closed complex surface $X$ has a topological decomposition $P\cup_\partial Q$ with $Q$ topologically pseudoconvex and $P$ homeomorphic to $Y=\natural n S^1\times D^3$. There is a smoothly pseudoconvex embedding $Y\subset P$, splitting $X$ into $Y$ and $Q$ separated by a pseudoconcave topological collar $I\times \partial Y$. \qed
\end{cor}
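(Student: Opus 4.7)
The plan is to decompose $X$ via a smooth handle decomposition turned upside down, to make the $1$-handle piece smoothly Stein by Theorem~\ref{main}, and then to topologically pseudoconvexify the complementary $2$-handlebody by Theorem~\ref{iso}, leaving a topological product collar in between.

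First I would pick any smooth handle decomposition of the (automatically smooth) closed complex surface $X$ and, after handle slides, arrange a unique $4$-handle. Dualizing, the original $3$- and $4$-handles assemble into a $1$-handlebody $Y_0\cong\natural n\,S^1\times D^3$ (with $n$ the number of $3$-handles), while the smooth closed complement $\mathcal{H}_0=\cl(X-Y_0)$ is a smooth $2$-handlebody. Since $Y_0$ has the homotopy type of a $1$-complex and $\SO(4)/\U(2)\approx S^2$ is $1$-connected, the complex structure inherited from $X$ is homotopic to a Stein handlebody structure on $Y_0$ (Eliashberg's construction has no framing obstruction on a $1$-handlebody). Theorem~\ref{main} with empty $W$ then smoothly ambiently isotopes $Y_0$ onto a Stein handlebody $Y\subset X$ with strictly pseudoconvex boundary.

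Next, set $\mathcal{H}=\cl(X-Y)$, a smooth $2$-handlebody with $\partial\mathcal{H}=\partial Y$ pseudoconcave from $\mathcal{H}$'s side. Push $\mathcal{H}$ inward along its collar to obtain a collared $2$-handlebody $\mathcal{H}'\subset\inter\mathcal{H}$ with $\cl\mathcal{H}'-\mathcal{H}'=\emptyset$, and apply Theorem~\ref{iso} to $\mathcal{H}'\subset X$ with empty $W$ and preassigned support inside $\inter\mathcal{H}$. The theorem yields a topological ambient isotopy (fixing $Y$) sending $\mathcal{H}'$ onto the handle-compactification $Q=\bar\G$ of a Stein Freedman handlebody, so $Q$ is topologically pseudoconvex by Definition~\ref{deftoppsc}. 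Setting $P=\cl(X-Q)=Y\cup(\mathcal{H}-\inter Q)$, the intermediate region $\mathcal{H}-\inter Q$ is a topological collar $\approx I\times\partial Y$ whose two boundary components $\partial Y$ and $\partial Q$ are each pseudoconvex from the side opposite the collar. Hence that collar is pseudoconcave, $P$ is homeomorphic to $Y$, and we obtain the claimed decomposition with a smoothly pseudoconvex $Y\subset P$ separated from $Q$ by the pseudoconcave collar.

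The main obstacle is verifying that the two ambient isotopies compose cleanly: Theorem~\ref{main} is smooth and has support only on $Y_0$, while Theorem~\ref{iso} is topological and can be arranged to have support disjoint from $Y$, so the Stein structure on $Y$ survives intact. A secondary point is checking that $\partial Q$ is genuinely bicollared in $X$ (so that $\mathcal{H}-\inter Q$ is a true topological $I$-bundle), but this is an immediate consequence of the handle-compactified Freedman handlebody structure produced by Theorem~\ref{iso} together with the preexisting collar of $\mathcal{H}'$.
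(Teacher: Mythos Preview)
Your overall strategy coincides with the paper's (which simply invokes the method of Theorem~\ref{toppsc}(c)): split $X$ into a 1-handlebody $Y$ dual to the 3- and 4-handles and a complementary 2-handlebody, make $Y$ smoothly pseudoconvex via Eliashberg, and topologically isotope the 2-handlebody to be pseudoconvex. Your explicit appeal to Theorem~\ref{main} for $Y$ is exactly what the ``smoothly pseudoconvex $Y\subset P$'' clause requires.

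There is, however, a genuine gap in the second half. You assert that $Q=\bar\G$ is topologically pseudoconvex ``by Definition~\ref{deftoppsc}'', but that definition concerns 3-manifold embeddings; the relevant notion for 4-manifolds (given in the paragraph preceding it) requires $Q$ to be a \emph{Stein compact}, meaning it must have a Stein \emph{neighborhood system} in $X$. Theorem~\ref{iso} alone yields a $\bar\G$ with Stein interior and bicollared boundary, but it produces no Stein open sets containing $\bar\G$. Having Stein interior is not the same as being a Stein compact, and nothing you have written supplies the required neighborhoods from outside.

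The fix, and what the paper actually uses, is Theorem~\ref{onion1} rather than Theorem~\ref{iso}: the Stein onion gives an entire nested family $\{\inter\G_\sigma\}_{\sigma\in\Sigma}$ of Stein open subsets. One then takes $Q=\bar\G_\sigma$ for some $\sigma\in\Sigma$ whose ternary expansion has infinitely many zeroes and infinitely many twos, so that the levels $\inter\G_{\sigma'}$ with $\sigma'\in\Sigma$, $\sigma'>\sigma$, form the needed Stein neighborhood system. The region between $\partial Y$ and $\partial\bar\G_\sigma$ is still a topological collar $I\times\partial Y$ by the mapping-cylinder structure, so the rest of your argument goes through unchanged.
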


\end{document}